\newtheorem{theorem}{Theorem}[section]
\newtheorem{lm}[theorem]{Lemma}
\newtheorem{exa}[theorem]{Example}
\newtheorem{cor}[theorem]{Corollary}
\newtheorem{pro}[theorem]{Proposition}
\newtheorem{defi}[theorem]{Definition}
\newtheorem{nota}[theorem]{Notation}
\newtheorem{rem}[theorem]{Remark}
\newtheorem{fact}[theorem]{Fact}
\newtheorem{nist}[theorem]{}
\newcommand{\tiff}{if and only if \ }
\newcommand{\df}{\ensuremath{\overset{\mathrm{df}}{=}}}
\newcommand{\klam}[1]{\ensuremath{\langle #1 \rangle}}
\def\p{\varphi}
\def\a{\alpha}
\def\b{\beta}
\def\d{\delta}
\def\ep{\varepsilon}
\def\DE{\Delta}
\def\vk{\varkappa}
\def\l{\lambda}
\def\s{\sigma}
\def\hs{\hat{h}}
\def\lra{\longrightarrow}
\def\sbe{\subseteq}
\def\stm{\setminus}
\def\ems{\emptyset}
\def\nes{\neq\emptyset}
\def\we{\wedge}
\def\ap{^{\prime}}
\def\inv{^{-1}}
\def\st{\ |\ }
\def\card #1{\vert #1 \vert}
\def\Di{{\bf D}}
\def\1{{\bf 1}}
\def\2{\mbox{{\bf 2}}}
\def\3{\mbox{{\bf 3}}}
\def\bi{\breve{\imath}}
\def\AA{{\cal A}}
\def\BB{{\cal B}}
\def\CC{{\cal C}}
\def\KK{{\cal K}}
\def\OO{{\cal O}}
\def\TT{{\cal T}}
\def\AAA{{\sf A}}
\def\FFF{{\sf F}}
\def\GGG{{\sf G}}
\def\PPP{{\sf P}}
\def\SSS{{\sf S}}
\def\TTT{{\sf T}}
\def\co{{\sf CO}}
\def\Id{{\sf Id}}
\def\ZComp{{\bf ZComp}}
\def\EDComp{{\bf EDComp}}
\def\Comp{{\bf Comp}}
\def\Tych{{\bf Tych}}
\def\MDeVe{{\bf MDeVe}}
\def\DeVe{{\bf DeVe}}
\def\ZHLC{{\bf BooleSp}}
\def\PZHLC{{\bf BooleSp}}
\def\GBPL{{\bf GBoole}}
\def\ZHC{{\bf Stone}}
\def\Bool{{\bf Boole}}
\def\MBool{{\bf mzMaps}}
\def\ZBool{{\bf zMaps}}
\def\CMZM{{\bf cmzMaps}}
\def\CZM{{\bf czMaps}}
\def\TMBool{{\bf TMaps}}
\def\KMBool{{\bf kMaps}}
\def\ZA{{\bf zBoole}}
\def\DZA{{\bf dzBoole}}
\def\CDZA{{\bf kBoole}}
\def\TDZA{{\bf TBoole}}
\def\dom{{\rm dom}}
\def\DZCB{{\bf dzCBoole}}
\def\ZCB{{\bf zCBoole}}
\def\ZB{{\bf zBoole}}
\def\EDT{{\bf EDTych}}
\def\int{\mbox{{\rm int}}}
\def\cl{\mbox{{\rm cl}}}
\def\CO{\mbox{{\rm CO}}}
\def\RC{\mbox{{\rm RC}}}
\def\At{{\rm At}}
\def\Att{{\sf At}}
\def\Tych{{\bf Tych}}
\def\Top{{\bf Top}}
\def\ZH{{\bf ZHaus}}
\def\Caba{{\bf Caba}}
\def\Set{{\bf Set}}
\def\tcx{t_X^C}
\def\tcy{t_Y^C}
\def\tcx0{t_{(X,X_0)}}
\def\tcy0{t_{(Y,Y_0)}}
\def\CLO{{\rm CO}}
\def\RCO{{\rm RC}}
\def\bU0{\bar{U}=(U^0,(U^i,U^{ci})_{i\in\omega})}
\def\bV0{\bar{V}=(V^0,(V^i,V^{ci})_{i\in\omega})}
\title{{\LARGE\bf Two extensions of the Stone Duality}\\
\vspace{0.2cm} {\LARGE\bf to the category of zero-dimensional}\\
\vspace{0.2cm} {\LARGE\bf Hausdorff spaces}\\ \vspace{0.5cm}
{\large\bf Georgi Dimov and Elza Ivanova-Dimova}\thanks{The authors
 were supported by the Bulgarian National Fund of Science, contract no. DN02/15/19.12.2016.}
\\
\vspace{0.2cm} {\footnotesize\rm Faculty of Math. and Informatics,
Sofia University,} {\footnotesize\rm 5 J. Bourchier Blvd., 1164
Sofia, Bulgaria}
}
\author{}
\date{}
\begin{document}

\maketitle

\begin{abstract}
Extending the Stone Duality Theorem, we prove two duality theorems for
the category $\ZH$ of
zero-dimensional Hausdorff spaces and continuous maps. Both of them imply easily the
Tarski Duality Theorem,
as well as two new duality theorems for the category $\EDT$ of extremally disconnected Tychonoff spaces and continuous maps.
Also, we describe  two categories which are dually equivalent to the category
$\ZComp$ of zero-dimensional Hausdorff compactifications of
zero-dimensional Hausdorff spaces and obtain as a corollary the Dwinger Theorem about zero-dimensional compactifications of a zero-dimensional
Hausdorff space.
\end{abstract}

\footnotetext[1]{{\footnotesize {\em Keywords:}  Boolean
z-algebra, Boolean dz-algebra, (maximal) Boolean z-map, Stone
space, duality, (complete) Boolean algebra, zero-dimensional
space, extremally disconnected space, zero-dimensional compactification.}}

\footnotetext[2]{{\footnotesize {\em 2010 Mathematics Subject
Classification:} 54B30, 54D35, 54D80, 18A40, 18B30, 06E15,
06E75.}}

\footnotetext[3]{{\footnotesize {\em E-mail addresses:}
gdimov@fmi.uni-sofia.bg, elza@fmi.uni-sofia.bg}}

\section{Introduction}

In 1937, M. Stone \cite{ST} proved that there exists a bijective
correspondence $T_l$ between the class of all (up to
homeomorphism) zero-dimensional locally compact Hausdorff spaces
(briefly, {\em Boolean spaces}\/) and the class of all (up to
isomorphism) generalized Boolean algebras (briefly, GBAs) (or,
equivalently, Boolean rings with or without unit). In the class of
compact Boolean spaces (briefly, {\em Stone spaces}\/) this
bijection can be extended to a dual equivalence $\TTT :\ZHC\lra\Bool$ between
the category $\ZHC$ of Stone spaces and continuous maps and the
category $\Bool$ of Boolean algebras and Boolean homomorphisms;
this is the classical Stone Duality. In 1964, H. P. Doctor
\cite{Do} showed that the Stone bijection $T_l$ can be even
extended  to a dual equivalence between the category $\PZHLC_{\bf perf}$ of
 Boolean spaces and  perfect maps between them and the
category $\GBPL$ of  GBAs and suitable morphisms between them.
Later on, G. Dimov \cite{D-a0903-2593,D-PMD12} extended the Stone
Duality to the category $\ZHLC$ of
Boolean spaces and continuous maps.

In this article, which was inspired by the recent paper \cite{BMO}
of G. Bezhanishvili,  P. J. Morandi  and  B. Olberding,  we describe two extensions of
the Stone Duality to the category $\ZH$ of zero-dimensional
Hausdorff spaces and continuous maps. Namely, we  define two
categories   $\DZA$ and $\MBool$, and prove
that they are dually equivalent to the category $\ZH$. From the restrictions of our dual equivalences $F:\ZH\lra\DZA$ and $\FFF:\ZH\lra\MBool$
to the category $\ZHC$  and the category $\Di$ of discrete spaces and continuous maps, we obtain easily the Stone Duality and the Tarski Duality, respectively. The restrictions of $F$ and $\FFF$ to the category $\EDT$ of extremally disconnected Tychonoff spaces and continuous maps give us two duality theorems for the category $\EDT$. We
introduce as well two other categories, namely, the categories
$\ZA$ and $\ZBool$, and show that they are dually equivalent to
the category $\ZComp$ of zero-dimensional Hausdorff
compactifications of zero-dimensional Hausdorff spaces.
As a corollary, we obtain the Dwinger Theorem \cite{Dwinger} about zero-dimensional compactifications of a zero-dimensional
Hausdorff space. Let us note that the category  $\ZComp$ is a full subcategory of the
category $\Comp$ of all Hausdorff compactifications of Tychonoff
spaces defined in \cite{BMO}.

The paper is organized as follows.  Section 2 contains all
preliminary facts and definitions which are used in this paper.

In
Section 3, we introduce the notions of {\em Boolean z-algebra} and
{\em Boolean dz-algebra}, define the category $\DZA$ having as objects  all dz-algebras and
prove our first duality theorem for the category $\ZH$ by showing that there exist contravariant functors
$F:\ZH\lra\DZA$  and $G:\DZA\lra\ZH$  whose compositions are naturally isomorphic to the corresponding identity functors
(see Theorem \ref{zduality}).

In the next Section 4, we introduce the notions of {\em Boolean z-map} and {\em maximal
Boolean z-map}, and define the category
 $\MBool$ having as objects  all
maximal Boolean z-maps.  In Theorem \ref{zeq} we show that the categories
$\DZA$ and $\MBool$ are equivalent. This implies immediately that
the categories $\ZH$ and $\MBool$ are dually equivalent (see
Theorem  \ref{nzduality} which is our second duality theorem for the category $\ZH$). The corresponding dual equivalences are denoted by
$\FFF:\ZH\lra\MBool$ and $\GGG:\MBool\lra\ZH$.

In Section 5 we describe the subcategories of the categories $\DZA$ and $\MBool$ which are isomorphic to the category $\Bool$ (see Propositions \ref{cdza} and \ref{mzbool}) and show that the corresponding restrictions of $F$, $G$, $\FFF$ and $\GGG$ imply the Stone Duality Theorem (see Propositions \ref{extstd} and \ref{extstdm}). Thus $F$, $G$, $\FFF$ and $\GGG$ are extensions of the classical Stone dual equivalences  $\TTT:\ZHC\lra\Bool$ and $\SSS:\Bool\lra\ZHC$.

In Section 6 we describe the subcategories of the categories $\DZA$ and $\MBool$ which are isomorphic to the category $\Di$ (see Proposition \ref{resttar}), prove that the corresponding restrictions of $F$, $G$, $\FFF$ and $\GGG$ lead to a  dual equivalence $$\AAA:\Caba\lra\Set$$ which is slightly different from the classical Tarski dual equivalence $\Att:\Caba\lra \Set$, and show that it implies the Tarski Duality Theorem (see Propositions \ref{newtar}).

In Section 7 we regard the restrictions of $F$, $G$, $\FFF$ and $\GGG$ to the category $\EDT$ and obtain two duality theorems for the category $\EDT$ (see Theorems \ref{zdualityed} and \ref{mzdualityed}). The categories which a dually equivalent to the category $\EDT$ are simpler than the categories $\DZA$ and $\MBool$; their objects are all complete Boolean z-algebras and all complete Boolean z-maps, respectively, although one could expect that their objects should be all complete Boolean dz-algebras and all complete Boolean mz-maps, respectively.

In the last Section 8, we define the categories $\ZA$ and
$\ZBool$. Their objects are, respectively, all Boolean z-algebras
 and all Boolean z-maps. We show that the category $\ZA$ is dually equivalent to the category $\ZComp$
(see Theorem \ref{zdualityc}). Then we prove that  the categories $\ZComp$ and
$\ZBool$ are dually equivalent (see Theorem \ref{nzdualityc}).
In \ref{dwthcor} we show that both of these results imply the Dwinger Theorem \cite{Dwinger} which describes the ordered set of all, up to equivalence, zero-dimensional compactifications of a zero-dimensional Hausdorff  space $X$.

We want to add that in the continuation \cite{DD} of this paper, we show how the Dimov Duality Theorem for Boolean spaces \cite{D-a0903-2593,D-PMD12} can be derived from  our duality Theorem \ref{zduality} and, moreover, using our Theorems \ref{zduality} and \ref{nzduality}, we prove two new duality theorems for the category $\ZHLC$.

 We now fix the notation.

 Throughout, $(B, \land, \lor, {}^*, 0, 1)$ will denote a Boolean algebra unless indicated otherwise;
we do not assume that $0 \neq 1$. With some abuse of language, we
shall usually identify algebras with their universe, if no
confusion can arise.

 We denote by $\2$ the simplest Boolean
algebra containing only $0$ and $1$, where $0\neq 1$.

 If $A$ is a Boolean algebra, then  $A^+ \df A \setminus \{0\}$ and $\At(A)$ is the set of all atoms of $A$.

 If $X$ is a set, we denote by $P(X)$ the power set of $X$;
 clearly, $(P(X),\cup,\cap,\stm,\ems, X)$  $(=(P(X),\sbe))$ is a complete atomic Boolean
 algebra.

 If $X$ is a topological space, we denote by $\CO(X)$ the set of
all clopen (= closed and open) subsets of $X$. Obviously,
$(\CO(X),\cup,\cap,\stm,\ems, X)$ $(=(\CO(X),\sbe))$ is a Boolean algebra.

 If $(X,\TT)$ is a topological space and $M$ is a subset of $X$, we
denote by $\cl_{(X,\TT)}(M)$ (or simply by $\cl(M)$ or $\cl_X(M)$)
the closure of $M$ in $(X,\TT)$ and by $\int_{(X,\TT)}(M)$ (or
briefly by $\int(M)$ or $\int_X(M)$) the interior of $M$ in
$(X,\TT)$.

If $\CC$ is a category, we denote by $\card\CC$ the class of the objects of $\CC$ and by $\CC(X,Y)$ the set of all
   $\CC$-morphisms between two $\CC$-objects $X$ and $Y$.

We denote by:

\begin{itemize}

\item $\Set$ the category of  sets and  functions,

\item  $\Top$ the category of  topological spaces and continuous maps,

\item  $\ZH$ the category of all zero-dimensional Hausdorff spaces and continuous
maps,

\item  $\Di$ the category of all discrete spaces and continuous
maps,

\item $\ZHC$ the category of all compact Hausdorff zero-dimensional spaces (= {\em Stone spaces}) and their continuous maps,

\item $\EDT$ the category of extremally disconnected Tychonoff spaces  and continuous maps,

\item $\Bool$ the category
of  Boolean algebras and Boolean  homomorphisms,

\item $\Caba$ the category of all complete atomic Boolean algebras and all complete Boolean
homomorphisms between them.
\end{itemize}

   The main reference books for all notions which are not defined here
 are \cite{AHS,MacLane,Dwinger,E}.

 \section{Preliminaries}

\begin{nist}\label{hats}
\rm
Let $\a\in\Bool(A,B)$ and $x\in\At(B)$. Then it is easy to see that the map $$\a_x:A\lra \2$$ defined by
$\a_x(a)=1\Leftrightarrow x\le\a(a)$, for $a\in A$, is a Boolean homomorphism. We put $$X_\a\df\{\a_x\st x\in\At(B)\}.$$
{\em Note that if  $\a$ is a complete Boolean homomorphism, then, for every $x\in \At(B)$, $\a_x$ is a complete Boolean homomorphism as well.}
We put $$h_\a:\At(B)\lra X_\a,\ \ x\mapsto \a_x.$$ It is easy to see that {\em if  every atom of $B$ is a meet of some
elements of $\a(A)$, then $h_\a$ is a bijection.}

If $A=B$ and $\a=id_B$, then we have that
$\a_x(b)=1\Leftrightarrow x\le b$, for all $b\in B$. In this case, for simplicity, we will write $\check{x}$ instead of  $\a_x$,  $\check{X}_B$ instead of   $X_\a$ and $\check{h}_B$ instead of $h_\a$. Hence,
$$\check{x}:B\lra \2$$ is defined by $\check{x}(b)=1\Leftrightarrow x\le b$, for all $b\in B$,  $$\check{X}_B\df\{\check{x}\st x\in\At(B)\}$$
and
$$\check{h}_B:\At(B)\lra \check{X}_B,\ \ x\mapsto \check{x}.$$
{\em Note that every $\check{x}$ is a complete Boolean homomorphism and $\check{h}_B$ is a bijection.}

Further, if $X$ is a set, $B=\PPP(X)$, $A$ is a Boolean subalgebra of $B$ and $\a$ is the inclusion map, then, obviously, the map $\a_x$ is defined by $\a_x(U)=1\Leftrightarrow x\in U$, for every $U\in A$. In order to simplify the notation, for such $A$ and $B$, we will write $\hat{x}$ (and, sometimes, even $\hat{x}_A$) instead of $\a_x$.
({\em Note that every $\hat{x}$ is a complete Boolean homomorphism.}) Thus, in such a case, by $$\hat{x}:A\lra\2$$ we will understand the map defined by $\hat{x}(U)=1\Leftrightarrow x\in U$, for every $U\in A$; also, we will write $\hat{X}_A$  instead of $X_\a$, and $\hat{h}_{X,A}$ instead of $h_\a$, i.e.,  $$\hat{X}_A=\{\hat{x}:A\lra\2\st x\in X\}$$
and
$$\hat{h}_{X,A}:X\lra \hat{X}_A,\ \ x\mapsto \hat{x}.$$ Note that  if the family $A$ {\em $T_0$-separates the points of $X$} (i.e., for every $x,y\in X$ such that $x\neq y$,  there exists $U\in A$ with $|U\cap\{x,y\}|=1$), then {\em the map $\hat{h}_{X,A}$ is a bijection.}

 If $X$ is a topological space and $A=(\CO(X),\sbe)$, {\em we will simply write $\hat{X}$ instead of $\hat{X}_A$, and  $\hat{h}_X$ instead of $\hat{h}_{X,A}$, i.e.,} $$\hat{h}_{X}:X\lra \hat{X},\ \ x\mapsto \hat{x}.$$
 Obviously, {\em if $X$ is a zero-dimensional Hausdorff space, then $\hat{h}_{X}$ is a bijection.}
\end{nist}

\begin{nist}\label{nistone}
\rm We will denote by $\co:\Top\lra\Bool$ the contravariant functor which assigns to every $X\in|\Top|$ the Boolean algebra $(\CO(X),\sbe)$ and to every $f\in\Top(X,Y)$, the Boolean homomorphism $\co(f):\co(Y)\lra\co(X)$ defined by $\co(f)(U)\df f\inv(U)$, for every $U\in\CO(Y)$.

Now we will briefly describe  the Stone duality \cite{ST}
between the categories $\Bool$ and $\ZHC$ using its presentation
given in \cite{H}. We will define two contravariant functors
$$\SSS :\Bool\lra\ZHC \ \ \mbox{ and } \ \ \TTT :\ZHC\lra \Bool.$$ For any Boolean algebra $A$, we let the
space $\SSS (A)$  to be the set
$$X_A\df\Bool(A,\2)$$ endowed with a topology $\TT_A$ having as a
closed base the family $\{s_A(a)\st a\in A\}$, where
\begin{equation}\label{sofa}
s_A(a)\df\{x\in X_A\st x(a)=1\},
\end{equation}
for every $a\in A$; then $\SSS (A)= (X_A,\TT_A)$ is a
 Stone
space. Note that the family $\{s_A(a)\st a\in A\}$ is also an open base of
the space $(X_A,\TT_A)$.

If $\p\in\Bool(A,B)$, then we define $\SSS (\p): \SSS (B)\lra \SSS (A)$
by the formula $\SSS (\p)(y)\df y\circ\p$ for every $y\in \SSS (B)$.
It is easy to see that $\SSS $ is a contravariant functor.

The contravariant functor $\TTT$ is defined to be the restriction of the contravariant functor $\co$ to the category $\ZHC$.

For every $X\in|\ZHC|$, the map $t_X:X\lra \SSS (\TTT(X)), \ \
x\mapsto (\hat{x}:\co(X)\lra\2)$ is a homeomorphism and $$t:\Id_{\ZHC}\lra \SSS\circ\TTT, \ \  X\mapsto t_X,$$ is a natural isomorphism.
Also, {\em the Stone map}
\begin{equation}\label{stonemap}
s_A:A\lra \TTT (\SSS (A)), \ \ a\mapsto s_A(a),
\end{equation}
is a $\Bool$-isomorphism and $$s:\Id_{\Bool}\lra \TTT\circ \SSS, \ \ A\mapsto s_A,$$
is natural isomorphism.
Thus  $\klam{\TTT,\SSS,t,s}:\ZHC\lra\Bool$ is an adjoint  dual
equivalence (in the sense of \cite{MacLane}).
\end{nist}

\begin{defi}\label{niDE}
\rm An {\em extension} of a space $X$ is a pair $(Y,c)$, where $Y$
is a space and $c:X\lra Y$ is a dense embedding of $X$ into  $Y$. Often we will simply write $c$ instead of $(Y,c)$.

Two extensions $(Y_i,c_i),\ i=1,2$, of $X$ are called {\em equivalent} if there exists a homeomorphism
$f:Y_1\lra Y_2$ such that $f\circ c_1=c_2$. Clearly, this defines an equivalence
relation  in  the class of all
extensions of $X$; the equivalence class of an extension $(Y,c)$
of $X$ will be denoted by $[(Y,c)].$ We write $$(Y_1,c_1)\le
(Y_2,c_2) $$ and say that the extension $(Y_2,c_2)$ is {\em
 larger} than the extension $(Y_1,c_1)$ if there
exists a continuous mapping $f:Y_2\lra Y_1$ such that $f\circ
c_2=c_1$. This relation is a {\em preorder}\/ (i.e., it is
reflexive and transitive).
 In the class of all Hausdorff extensions
of $X$, the equivalence relation associated with this  preorder
(i.e., $(Y_1,c_1)$ is larger than $(Y_2,c_2)$ and conversely)
coincide  with the relation of equivalence defined above.

Setting for every two Hausdorff extensions $(Y_i,c_i),\ i=1,2$, of
a Hausdorff space $X$, $$[(Y_1,c_1)]\le [(Y_2,c_2)] \mbox{ iff }
(Y_1,c_1)\le (Y_2,c_2),$$ we obtain a well-defined relation on the
set of all, up to equivalence, Hausdorff extensions of $X$; it is
already an order.
\end{defi}

\begin{defi}\label{admba}{\rm (Ph. Dwinger \cite{Dwinger})}
\rm Let $(X,\TT)$ be a zero-dimensional Hausdorff space.
 A Boolean algebra $A$ is called {\em admissible for}
$(X,\TT)$ (or, a {\em Boolean base for} $(X,\TT)$) if $A$ is a
Boolean subalgebra of the Boolean algebra $\co (X)$ and $A$ is an
open base for $(X,\TT)$.
 The set of all admissible Boolean algebras for $(X,\TT)$ will be
denoted by $\BB\AA(X,\TT)$ (or, simply, by $\BB\AA(X)$).
\end{defi}

\begin{nota}\label{cz}
\rm The set of all (up to equivalence) zero-dimensional compact
Hausdorff
  extensions of a zero-dimensional Hausdorff space $(X,\TT)$
will be denoted by $\KK_0(X,\TT)$ (or, simply, by  $\KK_0(X)$). The order on $\KK_0(X,\TT)$
induced by the order $``\le$ " on the set of all Hausdorff
extensions of $X$ (defined in \ref{niDE}) will be denoted again by
$``\le$".
\end{nota}

\begin{theorem}\label{dwingerth}{\rm (Ph. Dwinger \cite{Dwinger})}
Let $(X,\TT)$ be a zero-dimensional Hausdorff space. Then the
ordered sets  $(\BB\AA(X,\TT),\sbe)$ and $(\KK_0(X,\TT),\le)$
are isomorphic. The isomorphism $\d$ between these two ordered
sets is the following one: for every $A\in \BB\AA(X,\TT)$,
$\d(A)\df [(\SSS (A),e_A)]$, with $e_A: X\lra \SSS (A)$  defined by
$e_A(x)\df (\hat{x}:A\lra\2)$, for every $x\in X$
(see \ref{hats} for the notation $\hat{x}$).
\end{theorem}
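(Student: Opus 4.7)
I would establish that $\d$ is a well-defined, order-preserving, order-reflecting bijection from $(\BB\AA(X,\TT),\sbe)$ onto $(\KK_0(X,\TT),\le)$. The four verifications are: (i)~$(\SSS(A),e_A)$ is a zero-dimensional Hausdorff compactification of $X$ for every $A\in\BB\AA(X,\TT)$; (ii)~$A_1\sbe A_2$ implies $\d(A_1)\le\d(A_2)$; (iii)~$\d(A_1)\le\d(A_2)$ implies $A_1\sbe A_2$ (which also gives injectivity of $\d$ by antisymmetry of $\sbe$); and (iv)~$\d$ is surjective.

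\textbf{Well-definedness.} By~\ref{nistone}, $\SSS(A)$ is a Stone space, so the point is that $e_A:X\lra\SSS(A)$ is a dense embedding. The key identity is $e_A\inv(s_A(a))=a$ for every $a\in A$, immediate from the definitions of $e_A$ and of the clopen base $\{s_A(a)\st a\in A\}$ of $\SSS(A)$. Because $A$ is an open base of $X$, this identity forces both the continuity of $e_A$ and the coincidence of the $e_A$-initial topology with $\TT$; because $A$ $T_0$-separates the points of the Hausdorff space $X$, the observation in~\ref{hats} yields injectivity of $e_A$; and the same identity gives density, since any non-empty basic set $s_A(a)$ meets $e_A(X)$ at any $e_A(x)$ with $x\in a\nes$.

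\textbf{Monotonicity in both directions.} If $A_1\sbe A_2$, the inclusion $\iota:A_1\hra A_2$ is a Boolean homomorphism and $\SSS(\iota)\circ e_{A_2}=e_{A_1}$, because $\hat{x}_{A_2}|_{A_1}=\hat{x}_{A_1}$; hence $\d(A_1)\le\d(A_2)$. Conversely, suppose $f:\SSS(A_2)\lra\SSS(A_1)$ is continuous with $f\circ e_{A_2}=e_{A_1}$, and fix $a\in A_1$. Then $f\inv(s_{A_1}(a))$ is clopen in the Stone space $\SSS(A_2)$, so by the Stone isomorphism $s_{A_2}:A_2\lra\CO(\SSS(A_2))$ from~\ref{nistone} it equals $s_{A_2}(b)$ for a unique $b\in A_2$. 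Applying $e_{A_2}\inv$ and invoking the key identity twice gives
$$a=e_{A_1}\inv(s_{A_1}(a))=e_{A_2}\inv(f\inv(s_{A_1}(a)))=e_{A_2}\inv(s_{A_2}(b))=b,$$
so $a=b\in A_2$; hence $A_1\sbe A_2$.

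\textbf{Surjectivity and main obstacle.} Given $[(Y,c)]\in\KK_0(X,\TT)$, set $A\df\{c\inv(V)\st V\in\CO(Y)\}$. Preimages preserve Boolean operations, so $A$ is a Boolean subalgebra of $\co(X)$; since $Y$ is a Stone space and $c$ is an embedding, $A$ is an open base of $X$; and density of $c(X)$ in $Y$ forces $V\mapsto c\inv(V)$ to be a Boolean isomorphism $\psi\inv:\CO(Y)\lra A$. Hence $A\in\BB\AA(X,\TT)$. Composing the natural isomorphism $t_Y:Y\lra\SSS(\CO(Y))$ from~\ref{nistone} with $\SSS(\psi):\SSS(\CO(Y))\lra\SSS(A)$ yields a homeomorphism $h:Y\lra\SSS(A)$, and the chain $h(c(x))(a)=\hat{c(x)}(\psi(a))=1\LRarr c(x)\in\psi(a)\LRarr x\in c\inv(\psi(a))=a\LRarr\hat{x}(a)=1=e_A(x)(a)$ gives $h\circ c=e_A$; thus $\d(A)=[(Y,c)]$. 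The least routine step is the order-reflection argument above, which must pass through the Stone isomorphism $s_{A_2}$ in order to convert the topological identity $f\inv(s_{A_1}(a))=s_{A_2}(b)$ into the algebraic equality $a=b\in A_2$; everything else is bookkeeping around the single identity $e_A\inv(s_A(a))=a$ and the naturality of Stone duality.
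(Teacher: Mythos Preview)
Your proof is correct and complete: the key identity $e_A\inv(s_A(a))=a$ carries well-definedness, and your order-reflection argument via the Stone isomorphism $s_{A_2}$ is clean.

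The route, however, is genuinely different from the paper's. The paper does not prove Theorem~\ref{dwingerth} at its point of statement (it is quoted there as a known result), but in~\ref{dwthcor} it \emph{derives} Dwinger's theorem as a corollary of the dual equivalences $\Phi,\Psi$ (and $\Phi\ap,\Psi\ap$) between $\ZComp$ and $\ZA$ (resp.\ $\ZBool$) established in Theorems~\ref{zdualityc} and~\ref{nzdualityc}. Concretely, the paper sets $\DE(A)=[c_A]$ with $c_A=\Psi(A,\hat{X}_A)\circ\hat{h}_{X,A}$ and $\DE\ap([c])=A_c=c\inv(\co(Y))$, then reads off $\DE\ap\circ\DE=\mathrm{id}$ and $\DE\circ\DE\ap=\mathrm{id}$ from the natural isomorphisms and the $\ZComp$-isomorphisms already built in those proofs; monotonicity in each direction is likewise extracted from morphisms of the categories $\ZA$ and $\ZComp$. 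Your proof is the classical direct argument: it uses only the Stone functors $\SSS,\TTT$ from~\ref{nistone} and elementary bookkeeping, and is entirely independent of Sections~3--8. What the paper's approach buys is a demonstration that Dwinger's theorem is a formal consequence of the $\ZComp$--$\ZA$ duality (which is one of the paper's goals); what your approach buys is a self-contained and shorter argument that does not presuppose that machinery.
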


For every zero-dimensional Hausdorff space $X$, the ordered set
$(\BB\AA(X),\sbe)$ has a greatest element, namely the Boolean
algebra $\co (X)$. Thus, by the Dwinger Theorem \ref{dwingerth},
the ordered set $(\KK_0(X),\le)$ also has a greatest element. It
is denoted by $(\b_0X,\b_0)$. This fact was discovered earlier by
B. Banaschewski \cite{Ba} and $(\b_0X,\b_0)$ is said to be {\em
the Banaschewski compactification of $X$}. Clearly,
$(\b_0X,\b_0)=\d(\co (X))$, i.e. $\b_0X=\SSS (\co (X))$ and
$\b_0=e_{\co  (X)}$.

\begin{theorem}\label{zdextcb}{\rm (B. Banaschewski \cite{Ba})}
Let $(X_i,\TT_i)$, $i=1,2$, be  zero-dimensional Hausdorff spaces
and $(cX_2,c)$ be a zero-dimensional Hausdorff compactification of
$X_2$. Then for every continuous function $f:X_1\lra X_2$ there
exists a continuous function $g:\b_0X_1\lra cX_2$ such that
$g\circ\b_0=c\circ f$.
\end{theorem}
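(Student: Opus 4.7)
The plan is to reduce the extension problem to Stone duality via Dwinger's Theorem \ref{dwingerth}. By that theorem, every zero-dimensional compactification of $X_2$ arises from an admissible Boolean base; combined with the description $\b_0X_1=\SSS(\co(X_1))$ and $\b_0=e_{\co(X_1)}$, the whole statement will become a purely functorial matter about the contravariant functor $\SSS:\Bool\lra\ZHC$.

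First, I would invoke Theorem \ref{dwingerth} to choose an admissible Boolean algebra $A_2\in\BB\AA(X_2)$ such that $[(cX_2,c)]=\d(A_2)$, i.e., up to equivalence of extensions, $cX_2=\SSS(A_2)$ and $c=e_{A_2}$, where $e_{A_2}(y)=\hat{y}_{A_2}$ for every $y\in X_2$. Since replacing $c$ by an equivalent embedding does not affect the existence of the required factorization, I may work with this concrete model of $(cX_2,c)$. Next, continuity of $f:X_1\lra X_2$ ensures $f^{-1}(U)\in\co(X_1)$ for every $U\in A_2\sbe\co(X_2)$; so the assignment
\[
\p:A_2\lra \co(X_1),\qquad \p(U)\df f^{-1}(U),
\]
is a well-defined Boolean homomorphism (it commutes with finite unions, intersections, complements, and preserves $\ems$ and $X_2$).

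Now I set $g\df\SSS(\p):\SSS(\co(X_1))\lra\SSS(A_2)$; by \ref{nistone}, $g$ is a continuous map from $\b_0X_1$ to $cX_2$. It remains to check that $g\circ\b_0=c\circ f$. For every $x\in X_1$ one has $\b_0(x)=\hat{x}:\co(X_1)\lra\2$, and by the definition of $\SSS$ on morphisms,
\[
g(\b_0(x))=\hat{x}\circ\p:A_2\lra\2.
\]
For any $U\in A_2$, $(\hat{x}\circ\p)(U)=\hat{x}(f^{-1}(U))=1$ iff $x\in f^{-1}(U)$ iff $f(x)\in U$, which by \ref{hats} means $\hat{x}\circ\p=\widehat{f(x)}_{A_2}=e_{A_2}(f(x))=c(f(x))$. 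Hence $g\circ\b_0=c\circ f$, as required.

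There is essentially no hard step: once Dwinger's theorem identifies $cX_2$ with a Stone space of an admissible subalgebra and the Banaschewski compactification with the Stone space of the full clopen algebra, the extension $g$ is produced automatically by applying $\SSS$ to the pullback homomorphism. The only point to be careful about is that the equivalence of extensions used to replace $(cX_2,c)$ by $(\SSS(A_2),e_{A_2})$ preserves the property we want to establish; this is immediate since if $h:cX_2\lra\SSS(A_2)$ is a homeomorphism with $h\circ c=e_{A_2}$ and one obtains $g'$ satisfying $g'\circ\b_0=e_{A_2}\circ f$, then $g\df h^{-1}\circ g'$ is continuous and satisfies $g\circ\b_0=c\circ f$.
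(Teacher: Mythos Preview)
Your proof is correct. Note, however, that the paper does not supply its own proof of this statement: Theorem~\ref{zdextcb} is recorded in the Preliminaries as a classical result due to Banaschewski \cite{Ba} and is simply quoted without argument. Hence there is no ``paper proof'' to compare with.

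Your route via Dwinger's Theorem~\ref{dwingerth} is natural in this paper's setting and is essentially the standard Stone-duality proof: represent the target compactification as $\SSS(A_2)$ for an admissible Boolean base $A_2\sbe\co(X_2)$, pull clopens back along $f$ to obtain a Boolean homomorphism $\p:A_2\to\co(X_1)$, and take $g=\SSS(\p)$. One small remark: you do not need the full strength of Theorem~\ref{dwingerth} (the order isomorphism), only surjectivity of $\d$, and for this one may simply take $A_2\df c\inv(\co(cX_2))$ directly; the fact that $(cX_2,c)$ is equivalent to $(\SSS(A_2),e_{A_2})$ is then immediate from $t_{cX_2}$ and the obvious isomorphism $\co(cX_2)\cong A_2$. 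Your closing paragraph handling the passage through the equivalence of extensions is exactly the point one has to check, and you do it correctly.
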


\begin{nist}\label{tar}
\rm We will need the Tarski Duality between the categories $\Set$
and $\Caba$. It consists of two contravariant functors
$${\sf P}:\Set\lra\Caba \ \  \mbox{ and } \ \ {\sf At}:\Caba\lra\Set$$
which are defined as follows. For every set $X$, $${\sf P}(X)\df
(P(X),\sbe).$$ If $f\in\Set(X,Y)$, then ${\sf P}(f): {\sf P}(Y)\lra
{\sf P}(X)$ is defined by the formula $${\sf P}(f)(M)\df f\inv(M),$$ for
every $M\in P(Y)$. Further, for every $B\in|\Caba|$,
$${\sf At}(B)\df\At(B);$$ if $\s\in\Caba(B,B\ap)$, then
${\sf At}(\s):{\sf At}(B\ap)\lra {\sf At}(B)$ is defined by the formula
$${\sf At}(\s)(x\ap)\df \bigwedge\{b\in B\st x\ap\le\s(b)\},$$ for every
$x\ap\in \At(B\ap)$.

For each set $X$, we have a bijection
$\eta_X: X\lra {\sf At}({\sf P}(X)),$ given by $\eta_X(x)\df\{x\}$ for every
$x\in X$, and $$\eta:\Id_{\Set}\lra \At\circ\PPP,\ \  X\mapsto\eta_X,$$ is a natural isomorphism.

For each $B\in|\Caba|$ we have a $\Caba$-isomorphism
$$\ep_B: B\lra {\sf P}({\sf At}(B)),$$ given by $\ep_B(b)\df\{x\in
\At(B)\st x\le b\}$ for each $b\in B$, and $$\ep:\Id_{\Caba}\lra \PPP\circ\At,\ \  B\mapsto\ep_B,$$ is a natural isomorphism.  Note that
$\ep_B\inv(M)=\bigvee_B M$, for all $M\sbe \At(B)$.

Thus $\klam{\PPP,\Att,\eta,\ep}:\Set\lra\Caba$ is an adjoint dual equivalence.
\end{nist}

The following assertion is well known (because $\Att(\s)$ is the restriction to $\At(B\ap)$ of the lower (or, left) adjoint for $\s$ (see \cite[Theorem 4.2]{J})), but we will present here its short proof.

\begin{lm}\label{taroblm}
Let $\s\in\Caba(B,B\ap)$. Then, for every $b\in B$ and each $x\ap\in\At(B\ap)$, $(x\ap\le\s(b))\Leftrightarrow (\Att(\s)(x\ap)\le b)$.
\end{lm}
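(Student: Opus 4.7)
The plan is to prove the two implications separately, exploiting only the definition of $\Att(\sigma)$ together with the fact that $\sigma$ preserves arbitrary meets.

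For the forward implication $(\Rightarrow)$, suppose $x' \le \sigma(b)$. Then by definition $b$ belongs to the set $S_{x'} \df \{c \in B : x' \le \sigma(c)\}$ whose infimum defines $\Att(\sigma)(x')$. Since the infimum is a lower bound, one has immediately $\Att(\sigma)(x') = \bigwedge S_{x'} \le b$. This direction requires nothing beyond the definition of meet.

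For the backward implication $(\Leftarrow)$, the key intermediate fact to establish first is the inequality
\[
x' \le \sigma\bigl(\Att(\sigma)(x')\bigr).
\]
To see this, note that for every $c \in S_{x'}$ we have $x' \le \sigma(c)$, so $x'$ is a lower bound for $\{\sigma(c) : c \in S_{x'}\}$, giving $x' \le \bigwedge\{\sigma(c) : c \in S_{x'}\}$. Since $\sigma \in \Caba(B,B')$ is a complete Boolean homomorphism, it preserves arbitrary meets, so $\bigwedge\{\sigma(c) : c \in S_{x'}\} = \sigma(\bigwedge S_{x'}) = \sigma(\Att(\sigma)(x'))$, which yields the asserted inequality. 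Now, assuming $\Att(\sigma)(x') \le b$, the monotonicity of $\sigma$ gives $\sigma(\Att(\sigma)(x')) \le \sigma(b)$, and combining with the displayed inequality we obtain $x' \le \sigma(b)$.

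The whole argument is essentially routine; the only nontrivial point is the backward direction, where the use of completeness of $\sigma$ (preservation of the meet indexed by $S_{x'}$) is essential, since otherwise one could not move $\sigma$ past the infimum that defines $\Att(\sigma)(x')$. This is the only place where the hypothesis $\sigma \in \Caba(B,B')$ (as opposed to merely $\sigma \in \Bool(B,B')$) is used, and it is also what ensures that $\Att(\sigma)$ is a well-defined map into $\At(B)$ in the first place.
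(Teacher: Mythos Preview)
Your proof is correct and follows essentially the same approach as the paper's: the forward direction is immediate from the definition of the meet, and the backward direction uses completeness of $\sigma$ to obtain $x'\le\sigma(\Att(\sigma)(x'))$ and then monotonicity to conclude. Your exposition is slightly more explicit (introducing the set $S_{x'}$ and isolating the intermediate inequality), but the argument is the same.
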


\begin{proof}
Since $\Att(\s)(x\ap)= \bigwedge\{b\in B\st x\ap\le\s(b)\},$
we obtain immediately that $(x\ap\le\s(b))\Rightarrow (\Att(\s)(x\ap)\le b)$.
Suppose now that $\Att(\s)(x\ap)\le b$. Then $\s(\Att(\s)(x\ap))\le \s(b)$.
Since $\s(\Att(\s)(x\ap))=\s(\bigwedge\{c\in B\st x\ap\le\s(c)\})=\bigwedge\{\s(c)\st c\in B, x\ap\le\s(c)\}\ge x\ap$, we obtain that $x\ap\le\s(b)$.
\end{proof}

\begin{nist}\label{RC}
\rm
A set $F$ in a topological space $X$ is {\em regular closed} (or a {\em closed domain} \cite{E}) if it is the closure of its interior in $X$: $F={\rm cl(int}(F))$. The collection $\RC(X)$ of all regular closed sets in $X$ becomes a Boolean algebra, with the Boolean operations $\vee,\we,\,^*,0,1$ given by
\begin{align*}
F\vee G &= F\cup G, & F\we G &= \cl(\int(F\cap G)), & F^* &=  \cl(X\stm F), & 0 &=  \emptyset, & 1 &=  X.
\end{align*}
The Boolean algebra ${\RC}(X)$ is actually complete, with the infinite joins and meets given by
\begin{align*}
 \bigvee_{i\in I}F_i &=  \cl(\bigcup_{i\in I}F_i)\
=\cl(\bigcup_{i\in I}\int(F_i))=\cl(\int(\bigcup_{i\in I}F_i)), \quad\quad\quad
\bigwedge_{i\in I}F_i &=  \cl(\int(\bigcap_{i\in I}F_i)).
 \end{align*}
 \end{nist}

We will need as well the following well-known statement  (see, e.g., \cite{CNG},
p.271, and, for a proof, \cite{VDDB}).

\begin{lm}\label{isombool}
Let $X$ be a dense subspace of a topological space $Y$. Then the
functions $$r:\RC(Y)\lra \RC(X),\  F\mapsto F\cap X,$$ and
$$e:\RC(X)\lra\RC(Y),\  G\mapsto \cl_Y(G),$$ are Boolean isomorphisms between Boolean
algebras $\RC(X)$ and $\RC(Y)$, and $e\circ r=id_{\RCO(Y)}$, $r\circ
e=id_{\RCO(X)}$. (We will sometimes write $r_{X,Y}$ (resp., $e_{X,Y}$) instead of $r$ (resp., $e$).)
\end{lm}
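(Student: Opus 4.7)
The plan is to reduce everything to two basic facts about the interaction of $X$ with the closure and interior in $Y$, and then use these to verify well-definedness, mutual inverseness and preservation of the Boolean structure.

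First I would record the following two auxiliary observations, both immediate from the density of $X$ in $Y$:
\begin{itemize}
\item[(a)] For every closed subset $F$ of $Y$ one has $\int_X(F\cap X)=\int_Y(F)\cap X$. The inclusion $\supseteq$ is trivial; for the reverse, if $x\in\int_X(F\cap X)$, pick a $Y$-open $U\ni x$ with $U\cap X\sbe F\cap X$, and use $U\sbe\cl_Y(U\cap X)\sbe\cl_Y(F)=F$ to conclude $U\sbe\int_Y(F)$.
\item[(b)] For every open subset $V$ of $Y$, $\cl_Y(V\cap X)=\cl_Y(V)$, since $V\sbe\cl_Y(V\cap X)$ by density, and the reverse inclusion is obvious.
\end{itemize}

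Next I would check that $r$ and $e$ land in the correct Boolean algebras. Given $F\in\RC(Y)$, write $F=\cl_Y(\int_Y F)$; by (a), $\int_X(F\cap X)=\int_Y(F)\cap X$, and then $\cl_X(\int_X(F\cap X))=\cl_Y(\int_Y(F)\cap X)\cap X = \cl_Y(\int_Y F)\cap X = F\cap X$, using (b) and $\cl_X(A)=\cl_Y(A)\cap X$ for $A\sbe X$. Hence $r(F)\in\RC(X)$. For $G\in\RC(X)$, set $F\df \cl_Y(G)$; then $G\sbe \int_Y(F)\cap X\sbe F\cap X$ (the inclusion $G\sbe\int_Y(F)\cap X$ follows because $G=\cl_X(\int_X G)\supseteq \int_X G=\int_Y(\cl_X(\int_X G))\cap X\ldots$ — I would instead argue directly that $\cl_Y(\int_Y F)=F$: the inclusion $\sbe$ is clear, and $F=\cl_Y(G)\sbe\cl_Y(\cl_Y(\int_X G))=\cl_Y(\int_X G)\sbe\cl_Y(\int_Y F)$ by (b) applied to $V=\int_Y F$), so $e(G)\in\RC(Y)$.

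I would then establish the two identities $e\circ r=\id_{\RC(Y)}$ and $r\circ e=\id_{\RC(X)}$. For $F\in\RC(Y)$, $e(r(F))=\cl_Y(F\cap X)\supseteq\cl_Y(\int_Y(F)\cap X)=\cl_Y(\int_Y F)=F$ by (b), and $\cl_Y(F\cap X)\sbe\cl_Y(F)=F$. For $G\in\RC(X)$, $r(e(G))=\cl_Y(G)\cap X=\cl_X(G)=G$, since $G$ is closed in $X$. Finally, to see that $r$ is a Boolean homomorphism I would verify preservation of $0,1,\vee,\we$: the identities $r(\ems)=\ems$, $r(Y)=X$ and $r(F_1\vee F_2)=r(F_1)\vee r(F_2)$ are trivial, while $r(F_1\we F_2)=\cl_Y(\int_Y(F_1\cap F_2))\cap X=\cl_X(\int_Y(F_1\cap F_2)\cap X)=\cl_X(\int_X(F_1\cap F_2\cap X))=r(F_1)\we r(F_2)$ by (a) and (b). Since $r$ is a bijective bounded lattice homomorphism between Boolean algebras, it automatically preserves complements, so $r$ (and hence $e=r\inv$) is a Boolean isomorphism.

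The main obstacle is the computation of $r(F_1\we F_2)$, and more generally the fact that $\we$ in $\RC$ is defined via $\cl\circ\int$: one has to be careful that these two operations commute correctly with restriction to $X$, which is exactly what (a) and (b) ensure. Once these two facts are secured, the rest of the proof is formal.
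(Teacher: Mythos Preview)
Your argument is correct. The paper itself does not prove this lemma: it states it as well known and refers to \cite{CNG} and \cite{VDDB} for a proof, so there is no in-paper argument to compare against. Your reduction to the two density facts (a) and (b) is the standard route and works exactly as you say.

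One stylistic remark: the paragraph establishing $e(G)\in\RC(Y)$ is written a bit haltingly (you start one line of reasoning, abandon it, and restart). The clean version is already implicit in what you wrote: with $F=\cl_Y(G)$, the identity $F\cap X=\cl_X(G)=G$ combined with (a) gives $\int_X G=\int_Y(F)\cap X$, and then (b) applied to $V=\int_Y F$ yields $\cl_Y(\int_X G)=\cl_Y(\int_Y F)$; since $F=\cl_Y(G)=\cl_Y(\cl_X(\int_X G))=\cl_Y(\int_X G)$, this gives $F=\cl_Y(\int_Y F)$. You may want to streamline that passage, but the mathematics is sound.
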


\section{The first duality theorem for the ca\-tegory $\ZH$}

\begin{defi}\label{zalg}
\rm A pair $(A,X)$, where $A$ is a Boolean algebra and $X\sbe
\Bool(A,\2)$, is called a {\em Boolean z-algebra} (briefly, {\em
z-algebra}; abbreviated as ZA) if for each $a\in A^+$ there exists
$x\in X$ such that $x(a)=1$.
\end{defi}

Using the definition of the space $\SSS (A)$ (see \ref{nistone}),
where $A$ is a Boolean algebra, we obtain immediately the
following result:

\begin{fact}\label{zalgf}
 A pair $(A,X)$ is a z-algebra \tiff $A$ is a Boolean algebra and $X$ is
 a dense subset of\/ $\SSS (A)$.
\end{fact}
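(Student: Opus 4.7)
The plan is to translate the z-algebra axiom directly into a density condition using the open base $\{s_A(a)\st a\in A\}$ of $\SSS(A)$ recalled in \ref{nistone}. Since this family is an open base, a subset $X\sbe\SSS(A)$ is dense in $\SSS(A)$ if and only if $X\cap s_A(a)\nes$ for every $a\in A$ such that $s_A(a)\nes$. So everything reduces to identifying which elements $a\in A$ give rise to a nonempty basic open set $s_A(a)$.

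The key observation is that $s_A(a)\nes$ \tiff $a\in A^+$. Indeed, $s_A(0)=\ems$ trivially, while for $a\neq 0$ the fact that the Stone map $s_A:A\lra \TTT(\SSS(A))$ from \ref{nistone} is a $\Bool$-isomorphism (in particular injective) forces $s_A(a)\neq s_A(0)=\ems$; equivalently, by the Stone representation theorem, any nonzero element of a Boolean algebra is carried to $1$ by some homomorphism into $\2$. Combining these two observations, density of $X$ in $\SSS(A)$ is exactly the statement that for every $a\in A^+$ there is some $x\in X$ with $x(a)=1$, which is precisely the defining condition of a z-algebra in Definition \ref{zalg}. Both implications are then immediate, proving the fact.

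The only nonroutine point is the implication $a\neq 0\Rightarrow s_A(a)\nes$, but this is already built into the preliminaries via the isomorphism $s_A$, so no separate argument is required.
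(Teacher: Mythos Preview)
Your proof is correct and simply spells out the argument that the paper declares ``immediate'' from the definition of $\SSS(A)$; the route via the open base $\{s_A(a)\st a\in A\}$ and the observation $s_A(a)\nes\Leftrightarrow a\in A^+$ is exactly the intended one.
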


\begin{nota}\label{zalgn}
\rm If $A$ is a Boolean algebra and $X\sbe \Bool(A,\2)$,  we set
$$s_A^X(a)\df X\cap s_A(a)$$ for each $a\in A$ (see (\ref{sofa})
for $s_A$), defining in such a way a map $$s_A^X: A\lra \PPP(X), \ \
a\mapsto s_A^X(a).$$
\end{nota}

\begin{fact}\label{zalgff}
 A pair $(A,X)$ is a z-algebra \tiff $A$ is a Boolean algebra, $X\sbe \Bool(A,\2)$ and
 $s_A^X:A\lra \PPP(X)$ is a Boolean monomorphism.
\end{fact}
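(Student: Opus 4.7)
The plan is to observe that both conditions are equivalent to a single condition about the zero-kernel of $s_A^X$, using only elementary facts about Boolean homomorphisms.

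First I would note that whenever $X\sbe\Bool(A,\2)$, the map $s_A^X$ is automatically a Boolean homomorphism: for each $x\in X$ and each $a,b\in A$ one has $x(a\we b)=x(a)\we x(b)$, $x(a\vee b)=x(a)\vee x(b)$, $x(a^*)=(x(a))^*$, $x(0)=0$, $x(1)=1$, so the pointwise definition $s_A^X(a)=\{x\in X\st x(a)=1\}$ gives $s_A^X(a\we b)=s_A^X(a)\cap s_A^X(b)$, $s_A^X(a\vee b)=s_A^X(a)\cup s_A^X(b)$, $s_A^X(a^*)=X\stm s_A^X(a)$, $s_A^X(0)=\es$ and $s_A^X(1)=X$. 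Hence $s_A^X\in\Bool(A,\PPP(X))$, and the only remaining question is injectivity.

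Next I would use the standard fact that a Boolean homomorphism is injective iff its kernel is $\{0\}$, i.e., $s_A^X$ is a monomorphism iff for every $a\in A$ the equality $s_A^X(a)=\es$ forces $a=0$. By contraposition, this is equivalent to the statement that for every $a\in A^+$ one has $s_A^X(a)\neq\es$, that is, there exists $x\in X$ with $x(a)=1$. But this last condition is literally the defining property of a z-algebra from Definition~\ref{zalg}. Combining these observations yields both directions of the equivalence.

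I do not expect any real obstacle: the only slightly non-tautological step is the verification that $s_A^X$ preserves all Boolean operations, and this is immediate from $X\sbe\Bool(A,\2)$. The proof is thus essentially a one-line reformulation, and it is worth noting that it also matches the geometric picture given by Fact~\ref{zalgf} (density of $X$ in $\SSS(A)$), since under the Stone representation $s_A:A\hra\TTT(\SSS(A))$ the map $s_A^X$ becomes the composition of $s_A$ with the restriction map $U\mapsto U\cap X$, whose kernel is trivial precisely when $X$ is dense in $\SSS(A)$.
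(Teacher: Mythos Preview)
Your argument is correct. The approaches differ slightly: the paper routes the forward direction through Fact~\ref{zalgf} (density of $X$ in $\SSS(A)$) and the fact that the Stone map $s_A$ is a Boolean isomorphism, concluding that $s_A^X = (\,\cdot\cap X)\circ s_A$ is injective because taking closures recovers $s_A$. You instead verify directly that $s_A^X$ is a Boolean homomorphism and then use the kernel criterion for injectivity, which reduces both directions simultaneously to the literal defining clause of a z-algebra. Your route is more elementary and self-contained (it uses neither Fact~\ref{zalgf} nor the Stone isomorphism), while the paper's route has the advantage of making the connection to the topological picture explicit; you recover that connection only as a closing remark.
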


\begin{proof}
Suppose that  $(A,X)$ is a ZA. Then, by Fact \ref{zalgf}, $X$ is a
dense subset of $K\df \SSS (A)$ and thus $\cl_K(s_A^X(a))=s_A(a)$
for each $a\in A$. Therefore, using the fact that $s_A$ is a
Boolean isomorphism, we obtain that $s_A^X$ is a Boolean
monomorphism.

Conversely, if $s_A^X$ is a Boolean monomorphism, then
$s_A^X(a)\nes$ for each $a\in A^+$. Thus $X$ is dense in $\SSS (A)$,
which implies that $(A,X)$ is a ZA.
\end{proof}

\begin{fact}\label{zalgfff}
 Let $(A,X)$ be a z-algebra. Then the subspace topology on $X$ induced by\/
 $\SSS (A)$ coincides with the topology on $X$ generated by the base
 $s_A^X(A)$ and $s_A^X(A)\sbe \CO(X)$.
\end{fact}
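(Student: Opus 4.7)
The plan is to unwind the definitions and use the fact, already recorded in \ref{nistone}, that $\{s_A(a)\st a\in A\}$ is simultaneously an open base and a closed base for $\SSS(A)$.

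First I would recall that by \ref{nistone} the family $\mathcal{B}\df\{s_A(a)\st a\in A\}$ is an open base of $\SSS(A)$. By the definition of the subspace topology, the traces $\{B\cap X\st B\in\mathcal{B}\}$ form an open base for the subspace topology induced on $X$. Since by Notation \ref{zalgn} we have $s_A^X(a)=s_A(a)\cap X$ for every $a\in A$, the family $s_A^X(A)=\{s_A^X(a)\st a\in A\}$ is precisely an open base for the subspace topology on $X$; hence this subspace topology coincides with the topology on $X$ generated by the base $s_A^X(A)$.

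Next I would show $s_A^X(A)\sbe\CO(X)$. Each $s_A^X(a)$ is open in $X$ by the preceding paragraph, so it remains to see that each $s_A^X(a)$ is closed in $X$. Using again \ref{nistone}, the family $\mathcal{B}$ is also a closed base of $\SSS(A)$, so $s_A(a)$ is closed in $\SSS(A)$. Therefore $s_A^X(a)=s_A(a)\cap X$ is closed in the subspace topology on $X$, and the two observations together give $s_A^X(a)\in\CO(X)$.

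There is no real obstacle here; the statement is essentially a bookkeeping consequence of the symmetric role of $\{s_A(a)\st a\in A\}$ as both an open and a closed base of the Stone space $\SSS(A)$. The only point worth being careful about is to invoke \emph{both} the open-base and the closed-base halves of this fact, since the first is needed for the coincidence of topologies while the second is needed for the clopenness.
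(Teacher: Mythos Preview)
Your proof is correct and follows essentially the same route as the paper's: both arguments use that $s_A(A)$ consists of clopen sets forming a base of $\SSS(A)$, so that the traces $s_A^X(a)=s_A(a)\cap X$ form a base of the subspace topology on $X$ and are clopen there. The only cosmetic difference is that the paper invokes the single equality $\CO(\SSS(A))=s_A(A)$, whereas you split this into the ``open base'' and ``closed base'' halves; the content is identical.
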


\begin{proof}
Set $K\df \SSS (A)$. Then $\CO(K)=s_A(A)$ and $\CO(K)$ is a base for
$K$. Regarding $X$ as a subspace of $K$ and using the fact that
$s_A^X(A)=X\cap s_A(A)$, we obtain that $s_A^X(A)$ is a base for
the subspace topology on $X$ induced by $K$ and $s_A^X(A)\sbe
\CO(X)$. Hence, the topology on $X$ generated by the base
 $s_A^X(A)$ coincides with the subspace topology on $X$ induced by
 $K$.
\end{proof}

{\em When $(A,X)$ is a z-algebra, having in mind Fact \ref{zalgfff}, we will denote by $\bar{s}_A^X$ the map $s_A^X$ regarded as a map from $A$ to $\co (X)$.}

\begin{defi}\label{dzalgn}
\rm A z-algebra $(A,X)$ is called a {\em Boolean dz-algebra}
(briefly, {\em dz-algebra}; abbreviated as DZA) if $s_A^X(A)=\CO(X)$.
\end{defi}

Now, using Fact \ref{zalgff}, we obtain immediately the following result:

\begin{fact}\label{dzalgfn}
 A z-algebra $(A,X)$ is a DZA \tiff the map $\bar{s}_A^X:A\lra\co (X)$ is a Boolean isomorphism
 (regarding $X$ as a subspace of\/ $\SSS (A)$).
\end{fact}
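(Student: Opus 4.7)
The plan is to observe that the statement is essentially a repackaging of the two preceding facts together with Definition \ref{dzalgn}. First I would unpack the hypothesis: by Fact \ref{zalgff}, saying that $(A,X)$ is a z-algebra already gives that $s_A^X : A \lra \PPP(X)$ is a Boolean monomorphism. By Fact \ref{zalgfff}, its image lies in $\co(X)$, so $\bar{s}_A^X$ is a well-defined Boolean monomorphism $A \lra \co(X)$.

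The remaining content is then purely formal. A Boolean monomorphism between Boolean algebras is a Boolean isomorphism if and only if it is surjective. Hence $\bar{s}_A^X$ is a Boolean isomorphism iff $\bar{s}_A^X(A) = \co(X)$, i.e., iff $s_A^X(A) = \CO(X)$. By Definition \ref{dzalgn}, this last equality is exactly the condition that $(A,X)$ be a DZA, so both implications fall out simultaneously.

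I expect no real obstacle here: the genuine work has already been done in Facts \ref{zalgff} and \ref{zalgfff}, and Definition \ref{dzalgn} has been phrased precisely so as to match the surjectivity clause of ``isomorphism.'' The only point worth recording is that, since the codomain $\co(X)$ is a Boolean \emph{subalgebra} of $\PPP(X)$, no separate verification is required to see that joins, meets, or complements taken in $\co(X)$ agree with those in $\PPP(X)$ under $\bar{s}_A^X$; this is automatic from $s_A^X$ being a Boolean homomorphism whose image happens to sit inside $\co(X)$. Thus the whole proof reduces to a one-line deduction.
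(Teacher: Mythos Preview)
Your proposal is correct and matches the paper's own approach: the paper simply says the result follows immediately from Fact \ref{zalgff}, and you have spelled out exactly that deduction (together with Fact \ref{zalgfff} to justify the codomain $\co(X)$). There is nothing to add.
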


\begin{exa}\label{dzalge}
\rm Let $A$ be a Boolean algebra.  Then
$(A,\Bool(A,\2))$ is a dz-algebra. (The dz-algebras of this type will be called
{\em compact Boolean dz-algebras} (or, simply, {\em compact dz-algebras})

Indeed, setting $X_A\df \Bool(A,\2)$, we have that $(A,X_A)$ is a z-algebra, $\bar{s}_A^{X_A}=s_A$ and thus
$\bar{s}_A^{X_A}(A)=\CO(X_A)$.
Hence, $(A,X_A)$ is a DZA.
\end{exa}

\begin{exa}\label{dzalgez}
\rm   Let $X$ be a zero-dimensional Hausdorff space and $A\in\BB\AA(X)$ (see Definition \ref{admba}). Then the pair $(A,\hat{X}_A)$ is a z-algebra, the pair $(\co (X),\hat{X})$ is a dz-algebra and the map $\hat{h}_{X,A}:X\lra \hat{X}_A$ is a homeomorphism (see \ref{hats} for the notation).


Indeed, the pair $(A,\hat{X}_A)$ is a z-algebra since for every $U\in A^+$ there exists $x\in U$ and thus $\hat{x}(U)=1$.  Also, we have to show that $\hat{h}_{X,A}$ is a homeomorphism. The family $A$ $T_0$-separates the points of $X$ because $A$ is a base of the Hausdorff space $X$. Hence, by \ref{hats}, $\hat{h}_{X,A}$ is a bijection. The family $\hat{X}_A\cap\co(\SSS(A))=\hat{X}_A\cap s_A(A)=s_A^{\hat{X}_A}(A)$ is a base of $\hat{X}_A$ and, for every $U\in A$, $s_A^{\hat{X}_A}(U)=\{\hat{x}\in\hat{X}_A\st \hat{x}(U)=1\}=\{\hat{x}\in\hat{X}_A\st x\in U\}=\hat{h}_{X,A}(U)$; thus, $\hat{h}_{X,A}\inv(s_A^{\hat{X}_A}(U))=U$. This shows that $\hat{h}_{X,A}$ is a continuous and open bijection and, therefore, it is a homeomorphism. Finally, if $A=\co(X)$, then, since $\hat{h}_X=\hat{h}_{X,A}$ is a homeomorphism, $\hat{h}_X(\co(X))=\co(\hat{X})$. Thus, $s_A^{\hat{X}}(\co(X))=\CO(\hat{X})$, i.e., $(\co(X),\hat{X})$ is a DZA.

\end{exa}

\begin{exa}\label{dzalget}
\rm    The pair $(B,\check{X}_B)$, where $B\in|\Caba|$, is a dz-algebra (see \ref{hats} for the notation $\check{X}_B$).
(The dz-algebras of this type will be called
{\em Boolean T-algebras} (or, simply, {\em  T-algebras})).

Indeed, for every $b\in B^+$, there exists $x\in\At(B)$ such that $x\le b$. Then $\check{x}(b)=1$. Thus, $(B,\check{X}_B)$ is a z-algebra.
For every $x\in\At(B)$, we have that $s_B^{\check{X}_B}(x)=\{\check{x}\}$. Hence, $\check{X}_B$ is a discrete subspace of $\SSS(B)$. Therefore, $\CO(\check{X}_B)=P(\check{X}_B)$. By \ref{hats}, the function $\check{h}_B:\At(B)\lra \check{X}_B$, $x\mapsto\check{x}$, is a bijection. Also, if $M\sbe \At(B)$ and $b_M=\bigvee M$, then $M=\{x\in\At(B)\st x\le b_M\}$. Finally, for every $b\in B$, $s_B^{\check{X}_B}(b)=\{\check{x}\in \check{X}_B\st \check{x}(b)=1\}=\{\check{x}\in \check{X}_B\st x\le b\}$. Thus, $s_B^{\check{X}_B}(B)=P(\check{X}_B)$. This shows that $(B,\check{X}_B)$ is a dz-algebra.
\end{exa}

We will present an equivalent definition of the notion of dz-algebra as well.

\begin{defi}\label{teq}
\rm Let $C\in|\Caba|$  and $A,B$ be Boolean subalgebras of $C$. If
for  every $a\in A$ and any $x\in \At(C)$ such that $x\le a$ there
exists $b\in B$ with
 $x\le b\le a$, then we will say that {\em $A$ is
t-coarser than $B$ in $C$} or that {\em $B$ is t-finer than $A$}
in $C$; in this case we will write $A\preceq_C B$. We will say
that the Boolean algebras $A$ and $B$ are {\em t-equal in $C$} if
$A\preceq_C B$ and $B\preceq_C A$.
\end{defi}

The following assertion is obvious:

\begin{fact}\label{teqf}
Let $X$ be a set and $A,B$ be Boolean subalgebras of the Boolean
algebra\/ $\PPP(X)$. Let $\OO_A$ (resp., $\OO_B$) be the
topology on $X$ generated by the base $A$ (resp., $B$). Then $A$
and $B$ are t-equal in\/ $\PPP(X)$ \tiff the topologies $\OO_A$
and $\OO_B$ coincide.
\end{fact}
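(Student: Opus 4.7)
The plan is to unpack the definition of t-equality in the concrete case $C=\PPP(X)$ and recognize it as the familiar base-refinement criterion for topologies. First I would observe that the atoms of $\PPP(X)$ are precisely the singletons $\{x\}$ with $x\in X$, and that for any $M\in\PPP(X)$ the relation $\{x\}\le M$ coincides with $x\in M$. Consequently, the relation $A\preceq_{\PPP(X)} B$ translates into the concrete statement: for every $a\in A$ and every $x\in a$ there exists $b\in B$ with $x\in b\sbe a$.

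Next I would record the standard topological fact that if $\BB_1,\BB_2$ are bases for topologies $\TT_1,\TT_2$ on $X$, then $\TT_1\sbe\TT_2$ iff every member of $\BB_1$ is a union of members of $\BB_2$. Apply this with $\BB_1=A$ and $\BB_2=B$: the reformulation from the previous step says exactly that each $a\in A$ is the union of those $b\in B$ which are contained in $a$, so $A\preceq_{\PPP(X)} B$ is equivalent to $\OO_A\sbe\OO_B$. By the same argument applied with the roles of $A$ and $B$ swapped, $B\preceq_{\PPP(X)}A$ is equivalent to $\OO_B\sbe\OO_A$. Combining both implications yields the claimed equivalence: $A$ and $B$ are t-equal in $\PPP(X)$ iff $\OO_A=\OO_B$.

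I do not foresee any genuine obstacle; the proof is essentially a translation of definitions together with the standard base-comparison lemma. The only point deserving a brief verification is that $A$ (respectively $B$), being a Boolean subalgebra of $\PPP(X)$ that contains $X$ and is closed under finite intersections, does indeed satisfy the axioms of a base for a topology, so that $\OO_A$ and $\OO_B$ are well-defined. Accordingly I would keep the write-up short, stating the two equivalences $A\preceq_{\PPP(X)}B\Leftrightarrow \OO_A\sbe\OO_B$ and $B\preceq_{\PPP(X)}A\Leftrightarrow \OO_B\sbe\OO_A$ explicitly and then concluding by taking the conjunction of both.
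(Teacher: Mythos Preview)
Your proposal is correct; the paper itself declares the assertion obvious and gives no proof, so your unpacking of t-equality via $\At(\PPP(X))=\{\{x\}:x\in X\}$ together with the standard base-refinement criterion is exactly the intended verification.
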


\begin{fact}\label{dzalgf1}
 A z-algebra $(A,X)$ is a DZA \tiff it satisfies
the following condition:

\smallskip

\noindent(Dw) \  If $B$ is a Boolean subalgebra of\/ $\PPP(X)$
and $B$ is t-equal to $s_A^X(A)$ in\/ $\PPP(X)$, then $B\sbe
s_A^X(A)$.
\end{fact}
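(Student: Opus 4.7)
The plan is to reduce condition (Dw) to a statement about topologies on $X$ using the already-proved Fact \ref{teqf}, which says that two Boolean subalgebras of $\PPP(X)$ are t-equal in $\PPP(X)$ exactly when they generate the same topology on $X$ as a base. The other key ingredient is Fact \ref{zalgfff}, which tells us that $s_A^X(A)$ is a base of clopen sets for the subspace topology $\tau$ that $X$ inherits from $\SSS(A)$. Since $\SSS(A)$ is a Stone space, $X$ is zero-dimensional Hausdorff, so $\CO(X)$ is likewise a base for $\tau$. In particular, $s_A^X(A)$ and $\CO(X)$ generate the same topology on $X$, and hence are t-equal in $\PPP(X)$ by Fact \ref{teqf}.

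For the forward implication, I would assume that $(A,X)$ is a DZA, i.e. $s_A^X(A)=\CO(X)$. Take any Boolean subalgebra $B$ of $\PPP(X)$ that is t-equal to $s_A^X(A)=\CO(X)$ in $\PPP(X)$. By Fact \ref{teqf}, the topology on $X$ generated by $B$ coincides with the topology generated by $\CO(X)$, namely $\tau$; hence every $U\in B$ is $\tau$-open. Because $B$ is a Boolean subalgebra of $\PPP(X)$, it also contains $X\stm U$, which is therefore open as well, so $U$ is clopen. This gives $B\sbe\CO(X)=s_A^X(A)$, which is condition (Dw).

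For the converse, I would assume (Dw) holds and apply it to $B\df\CO(X)$, which is a Boolean subalgebra of $\PPP(X)$. As observed above, $\CO(X)$ and $s_A^X(A)$ are both bases for $\tau$, and so they are t-equal in $\PPP(X)$ by Fact \ref{teqf}. Condition (Dw) then yields $\CO(X)\sbe s_A^X(A)$. Combined with the inclusion $s_A^X(A)\sbe\CO(X)$ provided by Fact \ref{zalgfff}, this gives $s_A^X(A)=\CO(X)$, so $(A,X)$ is a DZA by Definition \ref{dzalgn}.

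I do not anticipate a serious obstacle; the main point is simply to notice that the t-equality relation in $\PPP(X)$ translates to equality of generated topologies, and that the zero-dimensionality of the subspace $X\sbe\SSS(A)$ is what makes $\CO(X)$ a base. The only small care is in the forward direction, where one must use that $B$ is a Boolean subalgebra (and not merely a base) to conclude that every $U\in B$ is not only open but also closed.
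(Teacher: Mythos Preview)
Your proof is correct and follows essentially the same approach as the paper: both directions rely on Fact \ref{teqf} to translate t-equality into equality of generated topologies, and Fact \ref{zalgfff} to know that $s_A^X(A)\sbe\CO(X)$ is a base for $X$. Your forward implication is in fact slightly more explicit than the paper's, which simply asserts ``then $B\sbe\co(X)$'' without spelling out the use of closure under complements; your remark that one needs $B$ to be a Boolean subalgebra (not merely a base) to pass from open to clopen is exactly the point.
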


\begin{proof}
Suppose that the ZA $(A,X)$ satisfies condition (Dw). By Fact
\ref{zalgfff}, we have that $s_A^X(A)$ is a base for $X$ and
$s_A^X(A)\sbe \CO(X)$.
 Then the Fact \ref{teqf} shows that the Boolean algebras
$s_A^X(A)$ and  $\co (X)$ are t-equal in $\PPP(X)$. Thus, by
condition (Dw), we obtain that $\CO(X)\sbe s_A^X(A)$. Therefore,
$s_A^X(A)=\CO(X)$, i.e., $(A,X)$ is a DZA.

Conversely, suppose that $(A,X)$ is a DZA. If $B$ is a Boolean
subalgebra of $\PPP(X)$ and $B$ is t-equal to $s_A^X(A)$ in
$\PPP(X)$, then $B\sbe \co (X)$. Therefore, $B\sbe s_A^X(A)$.
This shows that $(A,X)$ satisfies condition (Dw).
\end{proof}

Now, we are ready to formulate and prove our first duality theorem for the category $\ZH$.
The proof of the next assertion is obvious.

\begin{pro}\label{zboolpro}
There is a category $\DZA$ (resp., $\ZA$) whose objects are all dz-algebras (resp., z-algebras) and
whose morphisms between any two $\DZA$-objects (resp., $\ZA$-objects)  $(A,X)$ and
$(A\ap,X\ap)$ are all pairs $(\p,f)$ such that
$\p\in\Bool(A,A\ap)$, $f\in\Set(X\ap,X)$ and $x\ap\circ\p=f(x\ap)$
for every $x\ap\in X\ap$. The composition $(\p\ap,f\ap)\circ (\p,f)$ between two $\DZA$-morphisms (resp., $\ZA$-morphisms)
$(\p,f):(A,X)\lra(A\ap,X\ap)$ and
$(\p\ap,f\ap):(A\ap,X\ap)\lra (A'',X'')$ is defined to be the $\DZA$-morphism (resp., $\ZA$-morphism)
$(\p\ap\circ\p, f\circ f\ap):(A,X)\lra (A'',X'')$; the identity
morphism of a $\DZA\ap$-object (resp., $\ZA$-object) $(A,X)$ is defined to be $(id_A,id_X)$.
\end{pro}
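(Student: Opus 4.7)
The plan is to verify the four standard conditions that make $\DZA$ (and $\ZA$) into a category: (i) composition of morphisms yields a morphism, (ii) identity morphisms are morphisms, (iii) composition is associative, and (iv) the unit laws hold. Conditions (iii) and (iv) are immediate because composition of pairs is defined coordinatewise from composition of Boolean homomorphisms and composition of functions, both of which are already associative and unital. Condition (ii) is also trivial: for the pair $(id_A,id_X)$ with $(A,X)\in|\DZA|$ we have $x\circ id_A=x=id_X(x)$ for every $x\in X$, so it satisfies the defining compatibility condition; moreover $id_A\in\Bool(A,A)$ and $id_X\in\Set(X,X)$.

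The only point requiring any verification is (i): given $\DZA$-morphisms (or $\ZA$-morphisms) $(\p,f):(A,X)\lra(A\ap,X\ap)$ and $(\p\ap,f\ap):(A\ap,X\ap)\lra(A'',X'')$, I have to check that the pair $(\p\ap\circ\p,\,f\circ f\ap)$ again satisfies the compatibility condition. First, $\p\ap\circ\p\in\Bool(A,A'')$ and $f\circ f\ap\in\Set(X'',X)$ are clear from composition in $\Bool$ and $\Set$. For any $x''\in X''$, using the morphism condition for $(\p\ap,f\ap)$ we get $x''\circ\p\ap=f\ap(x'')\in X\ap$; then, applying the morphism condition for $(\p,f)$ at the point $f\ap(x'')\in X\ap$, we obtain $f\ap(x'')\circ\p=f(f\ap(x''))=(f\circ f\ap)(x'')$. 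Combining the two,
\[
x''\circ(\p\ap\circ\p)=(x''\circ\p\ap)\circ\p=f\ap(x'')\circ\p=(f\circ f\ap)(x''),
\]
which is exactly the defining condition for $(\p\ap\circ\p,f\circ f\ap)$ to be a morphism from $(A,X)$ to $(A'',X'')$. The same argument works verbatim both when the objects are z-algebras and when they are dz-algebras, since the morphism condition does not refer to the Dwinger property distinguishing the two classes.

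I anticipate no real obstacle: the whole statement reduces to the one-line computation above, and the category axioms then follow formally from those of $\Bool$ and $\Set$. The proof can therefore be left as a sentence or two in the paper, essentially as the author does by declaring it obvious.
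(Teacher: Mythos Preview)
Your proposal is correct and matches the paper's treatment: the paper simply declares the proof obvious, and your verification of the category axioms---in particular the compatibility check $x''\circ(\p\ap\circ\p)=(f\circ f\ap)(x'')$---is exactly the routine computation the authors had in mind.
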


\begin{theorem}\label{zduality}
The categories\/ $\ZH$ and\/ $\DZA$ are dually equivalent.
\end{theorem}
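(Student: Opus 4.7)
The plan is to construct contravariant functors $F:\ZH\lra\DZA$ and $G:\DZA\lra\ZH$ together with natural isomorphisms $\eta:\Id_\ZH\lra G\circ F$ and $\tau:\Id_\DZA\lra F\circ G$. On objects I set $F(X)\df(\co(X),\hat X)$, which is a DZA by Example \ref{dzalgez}, and $G(A,X)\df X$ endowed with the subspace topology inherited from $\SSS(A)$; by Fact \ref{zalgfff} the family $s_A^X(A)\sbe\co(X)$ is a base for this topology, so $X$ is zero-dimensional, and it is Hausdorff as a subspace of $\SSS(A)$, hence $G(A,X)\in|\ZH|$.

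On morphisms, for $f\in\ZH(X,Y)$ I set $F(f)\df(\co(f),\hat f)$, where $\hat f:\hat X\lra\hat Y$ sends $\hat x$ to $\widehat{f(x)}$; the DZA-compatibility $\hat x\circ\co(f)=\hat f(\hat x)$ is immediate, since both sides send $V\in\co(Y)$ to $1$ precisely when $f(x)\in V$. For $(\p,f)\in\DZA((A,X),(A',X'))$ I set $G(\p,f)\df f$, and continuity of $f:X'\lra X$ follows from the identity $f^{-1}(s_A^X(a))=s_{A'}^{X'}(\p(a))$, a direct consequence of the defining compatibility $x'\circ\p=f(x')$. Contravariant functoriality of $F$ and $G$ is then routine, using the composition rule $(\p_2,f_2)\circ(\p_1,f_1)=(\p_2\circ\p_1,f_1\circ f_2)$ in $\DZA$.

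For $\eta$ I take $\eta_X\df\hat h_X$, which is a homeomorphism by Example \ref{dzalgez}; naturality at $f\in\ZH(X,Y)$ is the pointwise identity $\hat f(\hat h_X(x))=\widehat{f(x)}=\hat h_Y(f(x))$. For $\tau$ I take $\tau_{(A,X)}\df(\bar s_A^X,(\hat h_X)^{-1}):(A,X)\lra(\co(X),\hat X)$: Fact \ref{dzalgfn} gives that $\bar s_A^X$ is a Boolean isomorphism, while $\hat h_X$ is a bijection because the Hausdorff topology of $X$ has the base $s_A^X(A)\sbe\co(X)$, so distinct points of $X$ are separated by some clopen and give distinct characteristic functions. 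The DZA-compatibility at $\tau_{(A,X)}$ collapses to the identity $\hat x\bigl(\bar s_A^X(a)\bigr)=x(a)$, which is the very definition of $\bar s_A^X$. Naturality of $\tau$ at $(\p,f):(A,X)\to(A',X')$ then unwinds to the two identities $\co(f)\circ\bar s_A^X=\bar s_{A'}^{X'}\circ\p$ and $(\hat h_X)^{-1}\circ\hat f=f\circ(\hat h_{X'})^{-1}$, checked on a generic element by invoking $f(x')=x'\circ\p$ and $\hat f(\widehat{x'})=\widehat{f(x')}$ respectively.

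The only real obstacle is bookkeeping: because $\DZA$-morphisms consist of two components of opposite variance, with composition reversing only the set coordinate, one must keep careful track of which compatibility enters each diagram. Once this is organized, the whole verification rests on the single identity $\hat x(\bar s_A^X(a))=x(a)$ together with the defining DZA-axiom $s_A^X(A)=\co(X)$, which makes $\bar s_A^X$ invertible and produces the natural iso $\tau$; everything else is formal.
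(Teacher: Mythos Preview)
Your proposal is correct and follows essentially the same approach as the paper: the same functors $F$ and $G$, the same natural isomorphisms $\hat h_X$ and $(\bar s_A^X,\hat h_X^{-1})$, and the same verifications. The only cosmetic difference is that you check continuity of $G(\p,f)=f$ directly via $f^{-1}(s_A^X(a))=s_{A'}^{X'}(\p(a))$, whereas the paper observes that $f$ is the restriction of the continuous map $\SSS(\p)$; these amount to the same computation.
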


\begin{proof}
We will first define a contravariant functor $$F:\ZH\lra\DZA.$$

For every $X\in|\ZH|$, let
$$F(X)\df(\co (X),\hat{X}).$$  Then Example \ref{dzalgez} shows that
$F(X)\in|\DZA|$. Further, for  $f\in\ZH(X,Y)$, set
 $$F(f)\df (\co (f),\hat{f}),$$
 where
 $$\hat{f}:\hat{X}\lra \hat{Y}$$ is defined by
$$\hat{f}(\hat{x})\df\widehat{f(x)}$$ for every $x\in X$. We will show that
$F(f)\in\DZA(F(Y),F(X)).$
We need only to prove that $\hat{x}\circ\co(f)=\widehat{f(x)}$ for every $x\in X$. So, let
$x\in X$. Then, for every $U\in \CO(Y)$, we have that
$(\hat{x}\circ\co(f))(U)=1\Leftrightarrow\hat{x}(f\inv(U))=1\Leftrightarrow x\in f\inv(U)\Leftrightarrow
f(x)\in U\Leftrightarrow \widehat{f(x)}(U)=1$. Therefore,
$\hat{x}\circ\co(f)=\hat{f}(\hat{x})$, for every $x\in X$.  Thus, $F(f)\in\DZA(F(Y),F(X))$.

It is easy to see that $F$ is a contravariant functor.

Now we will define a contravariant functor $$G:\DZA\lra\ZH$$ and
will prove that the functors $F\circ G$ and $G\circ F$ are
naturally isomorphic to the corresponding identity functors.

For every $(A,X)\in|\DZA|$, we set $$G(A,X)\df X,$$ where $X$ is regarded as a subspace of
$\SSS (A)$. Then, clearly, $G(A,X)\in|\ZH|$. If
$(\p,f):(A,X)\lra (A\ap,X\ap)$
is a $\DZA$-morphism, we put
$$G(\p,f)\df f.$$
Let us show that
$G(\p,f)$ is a continuous function. We have that $X\ap\sbe
\SSS (A\ap)=\Bool(A\ap,\2)$ and $X\sbe \SSS (A)=\Bool(A,\2)$.
For every $x\ap\in X\ap$,
$\SSS (\p)(x\ap)=x\ap\circ\p=f(x\ap)$. Thus, $f$ is a restriction of the continuous function
$\SSS (\p)$. Hence, $f:X\ap\lra X$ is a continuous function.
Therefore, $G$ is well-defined. Now it is easy to see that $G$ is
a contravariant functor.

We will show that the functors $F\circ G$ and $\Id_{\DZA}$ are naturally isomorphic.

Let $(A,X)\in|\DZA|$. Then $F(G(A,X))=F(X)=(\co (X),\hat{X})$, where $X$ is regarded as a subspace of $\SSS(A)$. By Fact \ref{dzalgfn}, the map
$\bar{s}_A^X:A\lra\co (X)$ is a Boolean isomorphism.
We put ${\breve{\imath}}_X\df \hat{h}_X\inv$ (recall that, by \ref{hats}, $\hat{h}_X$ is a bijection). Hence,
 $${\breve{\imath}}_X:\hat{X}\lra X,\ \  \hat{x}\mapsto x,$$ for every $x\in X$.
 Also, for every $x\in X$, $\hat{x}\circ \bar{s}_A^X={\breve{\imath}}_X(\hat{x})$. Indeed, for every $a\in A$, $\hat{x}(\bar{s}_A^X(a))=1\Leftrightarrow x\in s_A(a)\Leftrightarrow x(a)=1$, and thus $\hat{x}\circ \bar{s}_A^X=x={\breve{\imath}}_X(\hat{x})$. This shows that the map $(\bar{s}_A^X,{\breve{\imath}}_X):(A,X)\lra (\co (X),\hat{X})$ is a $\DZA$-morphism and, moreover, it is a $\DZA$-isomorphism. We put $s\ap_{(A,X)}\df (\bar{s}_A^X,{\breve{\imath}}_X)$. Then $$s\ap_{(A,X)}:(A,X)\lra (F\circ G)(A,X)$$ is a $\DZA$-isomorphism. Let $(\p,f):(A,X)\lra(A\ap,X\ap)$ be a $\DZA$-morphism. We will show that the diagram
\begin{center}
$\xymatrix{(A,X)\ar[rr]^{(\p,f)}\ar[d]_{s\ap_{(A,X)}} && (A\ap,X\ap)\ar[d]^{s\ap_{(A\ap,X\ap)}}\\
(\co(X),\hat{X})\ar[rr]^{(F\circ G)(\p,f)\;} && (\co(X\ap),\widehat{X\ap})
}$
\end{center}
is commutative. Indeed, we have that $$s\ap_{(A\ap,X\ap)}\circ (\p,f)=(\bar{s}_{A\ap}^{X\ap},{\breve{\imath}}_{X\ap})\circ (\p,f)=(\bar{s}_{A\ap}^{X\ap}\circ \p, f\circ {\breve{\imath}}_{X\ap})$$ and $$(F\circ G)(\p,f)\circ s\ap_{(A,X)}=(\co(f),\hat{f})\circ (\bar{s}_A^X,{\breve{\imath}}_X)=(\co(f)\circ \bar{s}_A^X,{\breve{\imath}}_X\circ\hat{f}).$$ Thus, we have to show that
$$\bar{s}_{A\ap}^{X\ap}\circ \p=\co(f)\circ \bar{s}_A^X\ \ \mbox{ and }\ \ f\circ {\breve{\imath}}_{X\ap}={\breve{\imath}}_X\circ\hat{f}.$$ Let $a\in A$. Then

\smallskip

\begin{tabular}{ll}
$\co(f)(\bar{s}_A^X(a))$ & $=f\inv(X\cap s_A(a))$\\
&$=\{x\ap\in X\ap\st f(x\ap)\in s_A(a)\}$\\
&$=\{x\ap\in X\ap\st f(x\ap)(a)=1\}$\\
&$=\{x\ap\in X\ap\st x\ap(\p(a))=1\}$\\
&$=X\ap\cap s_{A\ap}(\p(a))$\\
&$=\bar{s}_{A\ap}^{X\ap}(\p(a)).$\\
\end{tabular}

Also, for every $x\ap\in X\ap$, ${\breve{\imath}}_X(\hat{f}(\widehat{x\ap}))={\breve{\imath}}_X(\widehat{f(x\ap)})=f(x\ap)
=f({\breve{\imath}}_{X\ap}(\widehat{x\ap}))$.
Hence, $$s\ap: \Id_{\DZA}\lra F\circ G,\ \ (A,X)\mapsto s\ap_{(A,X)},$$ is a natural isomorphism.

Finally, we will show that the functors $G\circ F$ and $\Id_{\ZH}$ are naturally isomorphic.
Let $X\in|\ZH|$. Then $G(F(X))=G(\co (X),\hat{X})=\hat{X}$,
where $\hat{X}$ is regarded as a subspace of $\SSS (\co (X))$.
By Example \ref{dzalgez},  $\hat{h}_X$ is a homeomorphism.
Let $f:X\lra Y$ be a $\ZH$-morphism. Then $G(F(f))=\hat{f}$, and we have to show that the diagram
\begin{center}
$\xymatrix{X\ar[rr]^{f}\ar[d]_{\hat{h}_X} && Y\ar[d]^{\hat{h}_Y}\\
\hat{X}\ar[rr]^{\hat{f}} && \hat{Y}
}$
\end{center}
is commutative. For every $x\in X$, we have $\hat{h}_Y(f(x))=\widehat{f(x)}=\hat{f}(\hat{x})=\hat{f}(\hat{h}_X(x))$.
Therefore, $$\hat{h}:\Id_{\ZH}\lra G\circ F,\ \ X\mapsto \hat{h}_X,$$
is a natural isomorphism. All this shows that the categories\/ $\ZH$ and\/ $\DZA$ are dually equivalent.
\end{proof}

\section{The second duality theorem for the ca\-tegory $\ZH$}

Now we will define a new category $\MBool$ and will show, using the
Tarski duality, that it is equivalent to the category $\DZA$.
This will imply immediately that the category $\MBool$ is dually
equivalent to the category $\ZH$. The category $\MBool$ is similar
to the category $\MDeVe$, constructed in \cite{BMO} as a category
dually equivalent to the category $\Tych$ of Tychonoff spaces and
continuous maps.

\begin{defi}\label{zhomo}
\rm Let $A$ be a Boolean algebra and $B\in|\Caba|$. A Boolean
monomorphism $\a:A\lra B$ is said to be a {\em Boolean z-map}
(briefly, {\em z-map}) if every atom of $B$ is a meet of some
elements of $\a(A)$. A z-map $\a:A\lra B$ is called a {\em maximal
Boolean z-map} (briefly, {\em mz-map}) if $\CO(X_\a)=s_A^{X_\a}(A)$, where $X_\a$ is regarded as a subspace of $\SSS(A)$
(see \ref{hats} and \ref{zalgn} for the notation).
\end{defi}

\begin{exa}\label{zmapt}
\rm  Let $X\in|\ZH|$, $A\in\BB\AA(X)$ (see Definition \ref{admba} for this notation) and $i_A:A\hookrightarrow \PPP(X)$ be the inclusion monomorphism. Then $i_A$ is a z-map.

Indeed, since $A$ is a base for the Hausdorff space $X$, we have that for every $x\in X$, $\{x\}=\bigcap\{U\in A\st x\in U\}$.  Hence, $i_A$ is a z-map.
\end{exa}

\begin{exa}\label{mzmapt}
\rm    The map $id_B:B\lra B$, $b\mapsto b$, where $B\in|\Caba|$, is a mz-map.
(The mz-maps of this type will be called
{\em Boolean T-maps} (or, simply, {\em  T-maps})).

Indeed, it is obvious that $id_B$ is a z-map. Setting $\a\df id_B$, we obtain, as in \ref{hats}, that $X_\a=\check{X}_B$. In Example \ref{dzalget}, we proved that $\check{X}_B$ is a discrete subspace of $\SSS(B)$ (and, thus,  $\CO(\check{X}_B)=P(\check{X}_B)$) and $s_B^{\check{X}_B}(B)=P(\check{X}_B)$. This shows that $id_B$ is a mz-map.
\end{exa}

\begin{exa}\label{mzmapek}
\rm    The map $s_A^{X_A}:A\lra \PPP(X_A)$, where $A\in|\Bool|$ and $X_A=\Bool(A,\2)$, is a mz-map.
(The mz-maps of this type will be called
{\em compact mz-maps}.)

Indeed, since $s_A^{X_A}\upharpoonright A=s_A:A\lra \co(X_A)$, $X_A=\SSS(A)$ is a Hausdorff space and $\co(X_A)$ is a base for $X_A$, we obtain that $s_A^{X_A}$ is a z-map. Set $\a\df s_A^{X_A}$. Then $X_\a=\{\a_x:A\lra\2\st x\in X_A\}$ and, for every $x\in X_A$ and every $a\in A$, $\a_x(a)=1\Leftrightarrow x\in\a(a)\Leftrightarrow x(a)=1$. Thus, $\a_x\equiv x$. Hence, $X_\a=X_A$. Then $s_A^{X_\a}(A)= s_A^{X_A}(A)=s_A(A)=\co(X_A)=\co(X_\a)$. Therefore, $s_A^{X_A}$ is a mz-map.
\end{exa}

We will present an equivalent definition of the notion of mz-map as well.
Its straightforward proof is left to the reader.

\begin{pro}\label{zhomo1}
\rm Let $A$ be a Boolean algebra and $B\in|\Caba|$.  A z-map $\a:A\lra B$ is  an mz-map \tiff for every Boolean
subalgebra $C$ of $B$ which is t-equal to $\a(A)$ in $B$, we
have that $C\sbe \a(A)$.
\end{pro}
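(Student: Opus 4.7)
The plan is to reduce the statement to Fact \ref{dzalgf1} by representing $B$ as a power-set algebra via the Tarski representation. First I would observe that since $\a$ is a Boolean monomorphism, for each $a\in A^+$ we have $\a(a)\ne 0$, so some atom $y\in\At(B)$ satisfies $y\le\a(a)$, whence $\a_y(a)=1$; thus $(A,X_\a)$ is a z-algebra in the sense of Definition \ref{zalg}. Consequently, Fact \ref{dzalgf1} applies to $(A,X_\a)$: the equality $\CO(X_\a)=s_A^{X_\a}(A)$ (which is the defining condition of $\a$ being an mz-map) is equivalent to condition (Dw), namely that every Boolean subalgebra $D$ of $\PPP(X_\a)$ which is t-equal to $s_A^{X_\a}(A)$ in $\PPP(X_\a)$ satisfies $D\sbe s_A^{X_\a}(A)$.

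Next I would build a Boolean isomorphism $\Phi:B\lra\PPP(X_\a)$ by composing the Tarski map $\ep_B:B\lra\PPP(\At(B))$ with the bijection induced by $h_\a:\At(B)\lra X_\a$ (which is a bijection because $\a$ is a z-map; see \ref{hats}); explicitly, $\Phi(b)\df\{\a_x\st x\in\At(B),\ x\le b\}$. A direct check gives $\Phi(\a(a))=\{\a_x\st x\in\At(B),\ \a_x(a)=1\}=X_\a\cap s_A(a)=s_A^{X_\a}(a)$, so $\Phi$ restricts to a Boolean isomorphism between $\a(A)$ and $s_A^{X_\a}(A)$, and induces a bijection between the lattices of Boolean subalgebras of $B$ and of $\PPP(X_\a)$.

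Finally, since $\Phi$ is a Boolean isomorphism between complete atomic Boolean algebras, it maps atoms to atoms; hence the notion of t-equality between subalgebras, defined in \ref{teq} purely through atoms of the ambient algebra together with the order, is preserved by $\Phi$: a subalgebra $C$ of $B$ is t-equal to $\a(A)$ in $B$ iff $\Phi(C)$ is t-equal to $s_A^{X_\a}(A)$ in $\PPP(X_\a)$, and $C\sbe\a(A)$ iff $\Phi(C)\sbe s_A^{X_\a}(A)$. Combining this translation with the first step yields the claimed equivalence. The only real obstacle is routine bookkeeping: verifying that $\Phi(\a(A))=s_A^{X_\a}(A)$ and that t-equality transfers through $\Phi$; once this is in place, the result follows formally from Fact \ref{dzalgf1}.
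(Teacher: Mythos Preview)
Your argument is correct and is precisely the ``straightforward proof left to the reader'' that the paper has in mind: the paper develops Fact~\ref{dzalgf1} exactly so that Proposition~\ref{zhomo1} becomes its translation along the Tarski isomorphism $B\cong\PPP(X_\a)$, and you carry out that translation accurately (including the key identity $\Phi(\a(a))=s_A^{X_\a}(a)$ and the observation that t-equality is preserved by a complete-atomic Boolean isomorphism).
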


The proof of the next assertion is obvious.

\begin{pro}\label{mboolpro}
There is a category $\MBool$ (resp., $\ZBool$) whose objects are all mz-maps (resp., z-maps) and
whose morphisms between any two $\MBool$-objects (resp., $\ZBool$-objects) $\a:A\lra B$ and
$\a\ap:A\ap\lra B\ap$ are all pairs $(\p,\s)$ such that
 $\p\in\Bool(A,A\ap)$, $\s\in\Caba(B,B\ap)$ and
$\a\ap\circ\p=\s\circ\a$. The composition $(\p\ap,\s\ap)\circ (\p,\s)$ between two $\MBool$-morphisms
(resp., $\ZBool$-morphisms)
$(\p,\s):\a\lra\a\ap$ and
$(\p\ap,\s\ap):\a\ap\lra \a''$ is defined to be the $\MBool$-morphism (resp., $\ZBool$-morphism)
$(\p\ap\circ\p, \s\ap\circ \s):\a\lra \a''$; the identity map of
an $\MBool$-object (resp., $\ZBool$-object) $\a:A\lra B$ is defined to be $(id_A,id_B)$.
\end{pro}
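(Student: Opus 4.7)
The plan is to verify the three category axioms: that composition is well-defined on the proposed morphism classes, that it is associative, and that the proposed identities act as identities. I will treat $\MBool$ and $\ZBool$ in parallel, since the only difference between them lies at the level of objects (mz-maps versus z-maps), and the morphism structure is identical in both cases.

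First I would verify that the composition law is well-defined. Given two composable morphisms $(\p,\s):\a\lra\a\ap$ and $(\p\ap,\s\ap):\a\ap\lra\a''$, I have $\p\ap\circ\p\in\Bool(A,A'')$ by composition in $\Bool$, and $\s\ap\circ\s\in\Caba(B,B'')$ by composition in $\Caba$ (the composition of complete Boolean homomorphisms is a complete Boolean homomorphism). The commutativity condition to check is $\a''\circ(\p\ap\circ\p)=(\s\ap\circ\s)\circ\a$. Using the associativity of composition of functions together with the given relations $\a''\circ\p\ap=\s\ap\circ\a\ap$ and $\a\ap\circ\p=\s\circ\a$, I compute
\[
\a''\circ(\p\ap\circ\p)=(\a''\circ\p\ap)\circ\p=(\s\ap\circ\a\ap)\circ\p=\s\ap\circ(\a\ap\circ\p)=\s\ap\circ(\s\circ\a)=(\s\ap\circ\s)\circ\a.
\]
Thus $(\p\ap\circ\p,\s\ap\circ\s)$ is indeed a morphism from $\a$ to $\a''$ of the required form.

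Next I would check the identity law. For a $\ZBool$-object (or $\MBool$-object) $\a:A\lra B$, the pair $(id_A,id_B)$ lies in $\Bool(A,A)\times\Caba(B,B)$ and satisfies $\a\circ id_A=\a=id_B\circ\a$, so it is a legitimate morphism $\a\lra\a$. For any morphism $(\p,\s):\a\lra\a\ap$, the composite $(\p,\s)\circ(id_A,id_B)=(\p\circ id_A,\s\circ id_B)=(\p,\s)$, and similarly $(id_{A\ap},id_{B\ap})\circ(\p,\s)=(\p,\s)$, so $(id_A,id_B)$ behaves as the identity at $\a$.

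Finally, associativity of composition is immediate from associativity of composition in $\Bool$ and $\Caba$ coordinatewise: for three composable morphisms $(\p_i,\s_i)$, both bracketings yield $(\p_3\circ\p_2\circ\p_1,\s_3\circ\s_2\circ\s_1)$. I would not expect any genuine obstacle here; the main thing to be careful about is the well-definedness step, since one must confirm that complete Boolean homomorphisms are closed under composition and that the commuting square behaves well under pasting, but both are standard. Once these three items are dispatched, the proposition is established for both $\MBool$ and $\ZBool$ simultaneously.
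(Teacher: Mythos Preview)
Your verification is correct and complete: you check closure of composition, associativity, and the identity laws, which is exactly what is needed. The paper itself simply declares the assertion obvious and gives no proof, so your write-up is a careful unpacking of precisely the routine verification the authors omit.
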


\begin{theorem}\label{zeq}
The categories\/ $\MBool$ and\/ $\DZA$ are equivalent.
\end{theorem}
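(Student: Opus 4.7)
The plan is to construct covariant functors $H\colon\DZA\to\MBool$ and $K\colon\MBool\to\DZA$, show that $K\circ H=\Id_{\DZA}$ on the nose, and then produce a natural isomorphism $\Id_{\MBool}\cong H\circ K$ using the Tarski duality.

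For $H$, send a dz-algebra $(A,X)$ to the Boolean monomorphism $\bar s_A^X\colon A\to\PPP(X)$ (Fact~\ref{zalgff}). This is an mz-map: the codomain $\PPP(X)$ is a CABA; every singleton atom $\{x\}$ of $\PPP(X)$ equals $\bigcap\{s_A^X(a):x(a)=1\}$ because distinct elements of $X\sbe\Bool(A,\2)$ are $T_0$-separated by $A$; and a direct unwinding of \ref{hats} shows $X_{\bar s_A^X}=X$ as subsets of $\Bool(A,\2)$, since $(\bar s_A^X)_{\{x\}}(a)=1\Leftrightarrow\{x\}\sbe s_A^X(a)\Leftrightarrow x(a)=1$, so $(\bar s_A^X)_{\{x\}}=x$. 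Consequently the DZA condition $\CO(X)=s_A^X(A)$ is precisely the mz-condition for $\bar s_A^X$. On a $\DZA$-morphism $(\p,f)\colon(A,X)\to(A',X')$ put $H(\p,f)\df(\p,\PPP(f))$, where $\PPP(f)(M)\df f\inv(M)$ is a $\Caba$-morphism by Tarski duality; the relation $\PPP(f)\circ\bar s_A^X=\bar s_{A'}^{X'}\circ\p$ is a one-line calculation using $x'\circ\p=f(x')$.

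For $K$, send an mz-map $\a\colon A\to B$ to $(A,X_\a)$. Since $\a$ is a monomorphism, for each $a\in A^+$ one picks $y\in\At(B)$ with $y\le\a(a)$ and obtains $\a_y(a)=1$, so $(A,X_\a)$ is a z-algebra; it satisfies the DZA condition by the very definition of an mz-map. On a morphism $(\p,\s)\colon\a\to\a'$, set $K(\p,\s)\df(\p,f)$, where $f\colon X_{\a'}\to X_\a$ is the map induced by $\Att(\s)$ through the bijections $h_\a,h_{\a'}$ of \ref{hats} (which are bijective precisely because $\a,\a'$ are z-maps), namely $f(\a'_{y'})\df\a_{\Att(\s)(y')}$. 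The identity $\a'_{y'}\circ\p=f(\a'_{y'})$, required for $(\p,f)$ to be a $\DZA$-morphism, drops out of Lemma~\ref{taroblm}:
$$\a'_{y'}(\p(a))=1 \Leftrightarrow y'\le\s(\a(a)) \Leftrightarrow \Att(\s)(y')\le\a(a) \Leftrightarrow \a_{\Att(\s)(y')}(a)=1.$$

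By construction $K\circ H=\Id_{\DZA}$ on the nose, since $X_{\bar s_A^X}=X$ and an analogous unwinding handles morphisms. For the other composite, define $\mu_\a\colon\a\to H(K(\a))=\bar s_A^{X_\a}$ by $\mu_\a\df(\id_A,\tau_\a)$, with
$$\tau_\a\colon B\to\PPP(X_\a),\qquad \tau_\a(b)\df\{\a_y:y\in\At(B),\ y\le b\}.$$
Through the bijection $h_\a$, the map $\tau_\a$ factors as the Tarski isomorphism $\ep_B\colon B\to\PPP(\At(B))$ followed by the $\Caba$-iso $\PPP(\At(B))\to\PPP(X_\a)$ induced by $h_\a$, so $\tau_\a$ is a $\Caba$-isomorphism, and $\tau_\a\circ\a=\bar s_A^{X_\a}$ by direct evaluation. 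The main obstacle I anticipate is naturality: given $(\p,\s)\colon\a\to\a'$ with $K(\p,\s)=(\p,f)$, one must verify $\PPP(f)\circ\tau_\a=\tau_{\a'}\circ\s$. Evaluating both sides at $b\in B$ yields
$$f\inv(\tau_\a(b))=\{\a'_{y'}:\Att(\s)(y')\le b\}\ \ \mbox{and}\ \ \tau_{\a'}(\s(b))=\{\a'_{y'}:y'\le\s(b)\},$$
and these agree by Lemma~\ref{taroblm}. This furnishes the natural isomorphism and completes the equivalence.
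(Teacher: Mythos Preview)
Your proof is correct. Your functor $H$ coincides with the paper's $F'$, but your $K$ is genuinely simpler than the paper's $G'$: you send $\a\colon A\to B$ to the dz-algebra $(A,X_\a)$ directly, whereas the paper sends it to $(\co(X_\a),\widehat{X_\a})=F(X_\a)$, essentially routing through the already-proven duality of Theorem~\ref{zduality}. The payoff of your choice is that $K\circ H=\Id_{\DZA}$ holds strictly (since $X_{\bar s_A^X}=X$ literally as subsets of $\Bool(A,\2)$), so only one natural isomorphism needs to be built, and its components $(\id_A,\tau_\a)$ are visibly simpler than the paper's $\ep'_\a=(\bar s_A^{X_\a},\widehat{\ep_B})$. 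The paper's detour, on the other hand, makes the verification that $G'(\p,\s)$ is a $\DZA$-morphism automatic (it is $F(f_\s)$ for a continuous $f_\s$), while you must check the compatibility $\a'_{y'}\circ\p=\a_{\Att(\s)(y')}$ by hand---which you do correctly via Lemma~\ref{taroblm}. Both approaches ultimately hinge on the same adjunction identity from Lemma~\ref{taroblm}; yours is just the shorter path.
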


\begin{proof}
We start by defining a functor $F\ap:\DZA\lra\MBool$.

For every $(A,X)\in|\DZA|$, set $$F\ap(A,X)\df s_A^X$$ (see Notation
\ref{zalgn} for $s_A^X$). For showing that $F\ap(A,X)\in|\MBool|$,
notice first that $\PPP(X)\in|\Caba|$ and,
by Fact \ref{zalgff},
$s_A^X$ is a Boolean monomorphism.
Furthermore, by Fact \ref{zalgfff}, the topology on $X$ generated
by the base $s_A^X(A)$ is a $T_2$-topology. Thus, for every $x\in
X$, we have that $\{x\}=\bigcap\{s_A^X(a)\st x(a)=1\}$. Hence,
$F\ap(A,X)$ is a z-map. Set $\a\df s_A^X$ and $B\df \PPP(X)$. Then $\a:A\lra B$ and $\At(B)=X$. Since $(A,X)\in|\DZA|$, we have that $\a(A)=\CO(X)$.
Using the notation from \ref{hats}, we obtain that for every $x\in X=\At(B)$ and every $a\in A$, $\a_x(a)=1\Leftrightarrow x\le\a(a)\Leftrightarrow x\in s_A(a)\Leftrightarrow x(a)=1$. Thus, $x=\a_x$ for every $x\in X$. Hence $X\equiv X_\a$ and, therefore, $s_A^{X_\a}(A)=s_A^X(A)=\CO(X)=\CO(X_\a)$.
This shows that $F\ap(A,X)\in|\MBool|$.

For every $(\p,f)\in\DZA((A,X),(A\ap,X\ap))$, set $$F\ap(\p,f)\df
(\p,\PPP(f)).$$
We have that $x\ap\circ\p=f(x\ap)$ for every $x\ap\in X\ap$.
Having this in mind, we obtain that for every $a\in A$, $(\PPP(f)\circ
s_A^X)(a)=
f\inv(\{x\in X\st x(a)=1\})=\{x\ap\in
X\ap\st f(x\ap)(a)=1\}=\{x\ap\in X\ap\st
(x\ap\circ\p)(a)=1\}=
(s_{A\ap}^{X\ap}\circ\p)(a)$.
 Hence, $\PPP(f)\circ
s_A^X=s_{A\ap}^{X\ap}\circ\p$. Since
$\PPP(f)\in\Caba(\PPP(X),\PPP(X\ap))$, we obtain that
$F\ap(\p,f)\in\MBool(F\ap(A,X),F\ap(A\ap,X\ap))$.
Now it is easy to see
that $F\ap$ is a functor.

Further, we will define a functor $G\ap:\MBool\lra \DZA$.

For every $(\a:A\lra B)\in |\MBool|$, we set, in the notation from \ref{hats},
$$G\ap(\a)\df(\co (X_\a),\widehat{X_\a}),$$
where $X_\a$ is regarded as a subspace of $\SSS(A)$.
Hence $X_\a=\{\a_x:A\lra\2\st x\in \At(B)\}$ and $\widehat{X_\a}=\{\widehat{\a_x}:\co (X_\a)\lra\2\st \a_x\in X_\a\}$.
Obviously,  $X_\a\in|\ZH|$. It is now clear  that $G\ap(\a)=F(X_\a)$ (where $F$ is the contravariant functor defined in the proof of Theorem \ref{zduality}) and, therefore, by Theorem \ref{zduality}, $G\ap(\a)\in |\DZA|$.

Let $(\p,\s)\in\MBool(\a,\a\ap)$, where $\a:A\lra B$ and $\a\ap:A\ap\lra B\ap$. We set
$$G\ap(\p,\s)\df (\co(f_\s),\widehat{f_\s}),$$
where $f_\s:X_{\a\ap}\lra X_\a$ is defined by $\a\ap_{x\ap}\mapsto \a_{\Att(\s)(x\ap)}$ and $\widehat{f_\s}:\widehat{X_{\a\ap}}\lra\widehat{X_\a}$ is defined by $\widehat{\a\ap_{x\ap}}\mapsto\widehat{f_\s(\a\ap_{x\ap})}$.
 Clearly, $G\ap(\p,\s)=F(f_\s)$, so that we need only to show that $f_\s$ is a continuous map between the sets $X_{\a\ap}$ and $X_\a$ supplied with the subspace topology from the spaces $\SSS (A\ap)$ and $\SSS (A)$, respectively. Let $a\in A$. Then, using Lemma \ref{taroblm}, we obtain that

\smallskip

\begin{tabular}{ll}
$f_\s\inv(X_\a\cap s_A(X))$ & $=\{\a\ap_{x\ap}\in X_{\a\ap}\st f_\s(\a\ap_{x\ap})(a)=1\}$\\
&$=\{\a\ap_{x\ap}\in X_{\a\ap}\st \a_{\Att(\s)(x\ap)}(a)=1\}$\\
&$=\{\a\ap_{x\ap}\in X_{\a\ap}\st \Att(\s)(x\ap)\le\a(a)\}$\\
&$=\{\a\ap_{x\ap}\in X_{\a\ap}\st x\ap\le\s(\a(a))\}$\\
&$=\{\a\ap_{x\ap}\in X_{\a\ap}\st x\ap\le\a\ap(\p(a))\}$\\
&$=\{\a\ap_{x\ap}\in X_{\a\ap}\st \a\ap_{x\ap}(\p(a))=1\}$\\
&$=X_{\a\ap}\cap s_{A\ap}(\p(a)).$\\
\end{tabular}

\noindent This implies the continuity of $f_\s$. Now, using Theorem \ref{zduality}, we conclude that $G\ap(\p,\s)\in \DZA(G\ap(\a),G\ap(\a\ap))$. Having all this in mind, it is easy to see
that $G\ap$ is a functor.

We will now prove that $F\ap\circ G\ap\cong \Id_{\MBool}$.

Let $(\a:A\lra B)\in|\MBool|$. Then $(F\ap\circ G\ap)(\a)=F\ap(\co (X_\a),\widehat{X_\a})=s_{\CLO(X_\a)}^{\widehat{X_\a}}$ and $s_{\CLO(X_\a)}^{\widehat{X_\a}}:\co (X_\a)\lra\PPP(\widehat{X_\a})$, where $X_\a$ is regarded as a subspace of $\SSS(A)$.
By \ref{hats},
the map $h_\a:\At(B)\lra X_\a$, $x\mapsto \a_x$, is a bijection.
Then, clearly, the map $h_\a^P:\PPP(\At(B))\lra\PPP(X_\a)$, $M\mapsto \{h_\a(x)\st x\in M\}$, is a Boolean isomorphism. Again
by \ref{hats}, the map $\hat{h}_{X_\a}: X_\a\lra\widehat{X_\a}$, $\a_x\mapsto\widehat{\a_x}$, for all $x\in\At(B)$, is a bijection.
 Then the map $\hat{h}_{X_\a}^P:\PPP(X_\a)\lra\PPP(\widehat{X_\a})$, $M\mapsto \{\hat{h}_{X_\a}(\a_x)\st \a_x\in M\}$, is a Boolean isomorphism. Put $\widehat{\ep_B}\df \hat{h}_{X_\a}^P\circ h_\a^P\circ\ep_B$ (see \ref{tar} for the notation $\ep_B$). Then
$$\widehat{\ep_B}:B\lra\PPP(\widehat{X_\a}),\ \ \ b\mapsto \{\widehat{\a_x}\st x\in\At(B), x\le b\}$$
is a Boolean isomorphism. Since $\a$ is an mz-map, we have that $s_A^{X_\a}(A)=\CO(X_\a)$. Thus the map
$$\bar{s}_A^{X_\a}:A\lra\co (X_\a),\ \  a\mapsto X_\a\cap s_A(a),$$ is a Boolean isomorphism. Put $$\ep\ap_\a\df(\bar{s}_A^{X_\a},\widehat{\ep_B}).$$ We will show that $\ep\ap_\a\in\MBool(\a,(F\ap\circ G\ap)(\a))$. We need only to prove that the diagram
\begin{center}
$\xymatrix{A\ar[rr]^{\bar{s}_A^{X_\a}}\ar[d]_{\a} && \co(X_\a)\ar[d]^{s_{\CLO(X_\a)}^{\widehat{X_\a}}}\\
B\ar[rr]^{\widehat{\ep_B}} && {\sf P}(\widehat{X_{\a}})
}$
\end{center}
is commutative. Let $a\in A$. Then $s_{\CLO(X_\a)}^{\widehat{X_\a}}(s_A^{\widehat{X_\a}}(a))=s_{\CLO(X_\a)}^{\widehat{X_\a}}(\{\a_y\in X_\a\st \a_y(a)=1\})=\{\widehat{\a_x}\in \widehat{X_\a}\st \widehat{\a_x}(\{\a_y\in X_\a\st y\le\a(a)\})=1\}=
\{\widehat{\a_x}\in \widehat{X_\a}\st \a_x\in \{\a_y\in X_\a\st y\le\a(a)\}\}=
\{\widehat{\a_x}\in \widehat{X_\a}\st x\le\a(a)\}=\widehat{\ep_B}(\a(a))$.
Obviously, this implies that $\ep\ap_\a$ is an $\MBool$-isomorphism.

Let $\a:A\lra B$ and $\a\ap:A\ap\lra B\ap$ be $\MBool$-objects and $(\p,\s):\a\lra\a\ap$ be an $\MBool$-morphism. We will prove that the diagram
\begin{center}
$\xymatrix{\a\ar[rr]^{(\p,\s)}\ar[d]_{\ep\ap_\a} && \a\ap\ar[d]^{\ep\ap_{\a\ap}}\\
s_{\CLO(X_\a)}^{\widehat{X_\a}}\ar[rr]^{F\ap(G\ap(\p,\s))\;} && s_{\CLO(X_{\a\ap})}^{\widehat{X_{\a\ap}}}
}$
\end{center}
is commutative.
We have that $$F\ap(G\ap(\p,\s))=F\ap(\co(f_\s),\widehat{f_\s})=(\co(f_\s),\PPP(\widehat{f_\s}))$$
and $$\ep\ap_{\a\ap}\circ(\p,\s)=(\bar{s}_{A\ap}^{X_{\a\ap}},\widehat{\ep_{B\ap}})\circ(\p,\s)=
(\bar{s}_{A\ap}^{X_{\a\ap}}\circ\p,\widehat{\ep_{B\ap}}\circ\s).$$
Also $$F\ap(G\ap(\p,\s))\circ \ep\ap_\a=
(\co(f_\s),\PPP(\widehat{f_\s}))\circ (\bar{s}_A^{X_\a},\widehat{\ep_B})=
(\co(f_\s)\circ \bar{s}_A^{X_\a},\PPP(\widehat{f_\s})\circ \widehat{\ep_B}).$$
Hence, we have to show that $$\bar{s}_{A\ap}^{X_{\a\ap}}\circ\p=\co(f_\s)\circ \bar{s}_A^{X_\a}\ \
\mbox{ and }\ \ \widehat{\ep_{B\ap}}\circ\s=\PPP(\widehat{f_\s})\circ \widehat{\ep_B}.$$
Let $a\in A$. Then, using Lemma \ref{taroblm}, we obtain that

\smallskip

\begin{tabular}{ll}
$\co(f_\s)(\bar{s}_A^{X_\a}(a))$ & $=f_\s\inv(\{\a_x\in X_\a\st \a_x(a)=1\})$\\
&$=\{\a\ap_{x\ap}\in X_{\a\ap}\st f_\s(\a\ap_{x\ap})\in \{\a_x\in X_\a\st \a_x(a)=1\}\}$\\
&$=\{\a\ap_{x\ap}\in X_{\a\ap}\st \a_{\Att(\s)(x\ap)}(a)=1\}$\\
&$=\{\a\ap_{x\ap}\in X_{\a\ap}\st \Att(\s)(x\ap)\le\a(a)\}$\\
&$=\{\a\ap_{x\ap}\in X_{\a\ap}\st x\ap\le(\s\circ\a)(a)\}$\\
&$=\{\a\ap_{x\ap}\in X_{\a\ap}\st  x\ap\le\a\ap(\p(a))\}$\\
&$=\{\a\ap_{x\ap}\in X_{\a\ap}\st \a\ap_{x\ap}(\p(a))=1\}$\\
&$=\bar{s}_{A\ap}^{X_{\a\ap}}(\p(a)).$\\
\end{tabular}

\noindent So, $\bar{s}_{A\ap}^{X_{\a\ap}}\circ\p=\co(f_\s)\circ \bar{s}_A^{X_\a}$.
Let now $b\in B$. Then, using again Lemma \ref{taroblm}, we obtain that

\smallskip

\begin{tabular}{ll}
$\PPP(\widehat{f_\s})(\widehat{\ep_B}(b))$ & $=\widehat{f_\s}\inv(\{\widehat{\a_x}\in \widehat{X_\a}\st x\le b\}$\\
&$=\{\widehat{\a\ap_{x\ap}}\in \widehat{X_{\a\ap}}\st \widehat{f_\s}(x\ap)\in \{\widehat{\a_x}\in \widehat{X_\a}\st x\le b\}\}$\\
&$=\{\widehat{\a\ap_{x\ap}}\in \widehat{X_{\a\ap}}\st \widehat{f_\s(\a\ap_{x\ap})}\in \{\widehat{\a_x}\in \widehat{X_\a}\st x\le b\}\}$\\
&$=\{\widehat{\a\ap_{x\ap}}\in \widehat{X_{\a\ap}}\st \Att(\s)(x\ap)\le b\}$\\
&$=\{\widehat{\a\ap_{x\ap}}\in \widehat{X_{\a\ap}}\st x\ap\le\s(b)\}=\widehat{\ep_{B\ap}}(\s(b)).$\\
\end{tabular}

\noindent Hence, $\widehat{\ep_{B\ap}}\circ\s=\PPP(\widehat{f_\s})\circ \widehat{\ep_B}$.
This shows that
$\ep\ap_{\a\ap}\circ(\p,\s)=F\ap(G\ap(\p,\s))\circ \ep\ap_\a$. Therefore, $$\ep\ap:\Id_{\MBool}\lra F\ap\circ G\ap,\ \ \a\mapsto\ep\ap_\a,$$
is a natural isomorphism.

Finally, we will prove that $G\ap\circ F\ap\cong \Id_{\DZA}$.

Let $(A,X)\in |\DZA|$. Then  $G\ap(F\ap(A,X))=G\ap(s_A^X)=(\co (X),\hat{X})$, where $X$ is regarded as asubspace of $\SSS(A)$. Indeed, putting $\a\df s_A^X$, we obtain, as in the beginning of this proof, that $\a_x\equiv x$ for every $x\in X$, and, hence,
$$X_\a\equiv X.$$ Thus, $\hat{x}:\co (X)\lra \2$ is defined by $\hat{x}(U)=1\Leftrightarrow x\in U$, for $U\in\CO(X)$, and $\hat{X}=\{\hat{x}\st x\in X\}$. Obviously, we have that $G\ap(F\ap(A,X))=F(G(A,X))$, where $F$ and $G$ are the contravariant functors defined in the proof of Theorem \ref{zduality}. Hence, we can use the $\DZA$-isomorphism $$s\ap_{(A,X)}:(A,X)\lra G\ap(F\ap(A,X))$$ defined there by $s\ap_{(A,X)}\df (\bar{s}_A^X,{\breve{\imath}}_X)$, where
${\breve{\imath}}_X:\hat{X}\lra X$,  $\hat{x}\mapsto x$.

Let $(\p,f)\in\DZA((A,X),(A\ap,X\ap))$. Then $G\ap(F\ap(\p,f))=G\ap(\p,\PPP(f))=(\co(f_\s),\widehat{f_\s})$, where $\s\df\PPP(f)$, $f_\s:X_{\a\ap}\lra X_\a$, $\a=F\ap(A,X)=s_A^X$ and $\a\ap=F\ap(A\ap,X\ap)=s_{A\ap}^{X\ap}$. Since $X_{\a\ap}\equiv X\ap$ and $X_\a\equiv X$ (see the beginning of this proof), we obtain that $f_\s(x\ap)=\Att(\s)(x\ap)=\Att(\PPP(f))(x\ap)=f(x\ap)$, i.e., $f_\s\equiv f$. Thus $G\ap(F\ap(\p,f))=(\co(f),\hat{f})=F(G(\p,f))$. Thus the proof of the commutativity of the diagram
\begin{center}
$\xymatrix{(A,X)\ar[rr]^{(\p,f)}\ar[d]_{s\ap_{(A,X)}} && (A\ap,X\ap)\ar[d]^{s\ap_{(A\ap,X\ap)}}\\
G\ap(F\ap(A,X))\ar[rr]^{(G\ap\circ F\ap)(\p,f)\;} && G\ap(F\ap(A\ap,X\ap))
}$
\end{center}
proceeds as in the proof of Theorem \ref{zduality}. Therefore,
$$s\ap: \Id_{\DZA}\lra G\ap\circ F\ap,\ \ (A,X)\mapsto s\ap_{(A,X)},$$ is a natural isomorphism.

All this shows that the categories\/ $\MBool$ and\/ $\DZA$ are equivalent.
\end{proof}

Obviously, Theorems \ref{zduality} and \ref{zeq} imply the
following theorem:

\begin{theorem}\label{nzduality}
The categories\/ $\ZH$ and\/ $\MBool$ are dually equivalent.
\end{theorem}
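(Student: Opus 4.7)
The plan is to combine the dual equivalence of Theorem \ref{zduality} with the (covariant) equivalence of Theorem \ref{zeq} by simple composition. I would set
$$\FFF \df F\ap \circ F : \ZH \lra \MBool \ \ \mbox{ and } \ \ \GGG \df G \circ G\ap : \MBool \lra \ZH,$$
where $F, G$ are the contravariant functors from Theorem \ref{zduality} and $F\ap, G\ap$ are the covariant functors from Theorem \ref{zeq}. Since the composition of a contravariant functor with a covariant one is again contravariant, both $\FFF$ and $\GGG$ are contravariant functors.

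For the natural isomorphisms $\Id_{\ZH} \cong \GGG \circ \FFF$ and $\Id_{\MBool} \cong \FFF \circ \GGG$, I would whisker the natural isomorphisms already produced in the previous two theorems. For the first,
$$\GGG \circ \FFF = G \circ (G\ap \circ F\ap) \circ F \cong G \circ \Id_{\DZA} \circ F = G \circ F \cong \Id_{\ZH},$$
where the first isomorphism is obtained by whiskering the natural isomorphism $s\ap : \Id_{\DZA} \lra G\ap \circ F\ap$ of Theorem \ref{zeq} by $G$ on the left and $F$ on the right, and the last isomorphism is $\hat{h}: \Id_{\ZH} \lra G \circ F$ of Theorem \ref{zduality}. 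Symmetrically,
$$\FFF \circ \GGG = F\ap \circ (F \circ G) \circ G\ap \cong F\ap \circ \Id_{\DZA} \circ G\ap = F\ap \circ G\ap \cong \Id_{\MBool},$$
using the natural isomorphism $s\ap : \Id_{\DZA} \lra F \circ G$ from Theorem \ref{zduality} (whiskered by $F\ap$ and $G\ap$) and the natural isomorphism $\ep\ap: \Id_{\MBool} \lra F\ap \circ G\ap$ from Theorem \ref{zeq}.

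There is no real obstacle here: the content is the standard 2-categorical fact that the composite of a dual equivalence with an equivalence of categories is again a dual equivalence. The author's ``Obviously'' is thus fully justified, and the only bookkeeping is to verify that the horizontal composites above really are natural isomorphisms between contravariant functors, which is automatic from the naturality of the isomorphisms supplied in Theorems \ref{zduality} and \ref{zeq}.
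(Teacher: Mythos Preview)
Your argument is correct and matches the paper's own opening: the paper defines $\FFF_0\df F'\circ F$ and $\GGG_0\df G\circ G'$ and immediately says ``Clearly, they are dual equivalences,'' which is exactly your composition argument. The bulk of the paper's proof is then devoted to something you do not attempt: writing out explicit formulas for $\FFF_0$, $\GGG_0$ and the associated natural isomorphisms, and then introducing \emph{simpler} (naturally isomorphic) functors $\FFF$ and $\GGG$ (with $\FFF(X)=i_X:\co(X)\hookrightarrow\PPP(X)$ and $\GGG(\a)=X_\a$) that are used throughout Sections~5--7, so while your proof establishes the theorem, it does not supply the concrete data the later sections rely on.
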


\begin{proof}
We put $\FFF_0\df F\ap\circ F$ and $\GGG_0\df G\circ G\ap$. Then $$\FFF_0:\ZH\lra \MBool \ \mbox{ and }\ \GGG_0:\MBool\lra\ZH.$$
Clearly, they are dual equivalences.
In the rest of this proof, we will find the explicit descriptions of these contravariant functors, as well as the descriptions of the natural isomorphisms $\tilde{\eta}^{\, 0}:\Id_{\ZH}\lra \GGG_0\circ\FFF_0$ and $\tilde{\ep}^{\, 0}:\Id_{\MBool}\lra \FFF_0\circ\GGG_0$. Moreover, we will define two new contravariant functors
$$\FFF:\ZH\lra \MBool \ \mbox{ and }\ \GGG:\MBool\lra\ZH$$
which are simpler than $\FFF_0$ and $\GGG_0$ but are again dual equivalences.

For every $X\in|\ZH|$, we have that $$\FFF_0(X)=F\ap(\co (X),\hat{X})=s_{\CLO(X)}^{\hat{X}}.$$

For every $f\in\ZH(X,Y)$, $$\FFF_0(f)=F\ap(\co(f),\hat{f})=(\co(f),\PPP(\hat{f}))$$
(see the beginning of the proof of Theorem \ref{zduality} for the notation $\hat{f}$).

For every $(\a:A\lra B)\in|\MBool|$, $$\GGG_0(\a)=G(\co (X_\a),\widehat{X_\a})=\widehat{X_\a},$$ where $X_\a$ is regarded as a subspace of $\SSS(A)$.

For $(\p,\s)\in\MBool(\a,\a\ap)$, 
$$\GGG_0(\p,\s)=G(\co(f_\s),\widehat{f_\s})=\widehat{f_\s}$$
(see the definition of $G\ap$ in the proof of Theorem \ref{zeq} for the notation $f_\s$ and $\widehat{f_\s}$).

Now, for every $X\in|\ZH|$, we have that
$$(\GGG_0\circ\FFF_0)(X)=\GGG_0(s_{\CLO(X)}^{\hat{X}})=\widehat{X_\a}, \mbox{ where } \a=s_{\CLO(X)}^{\hat{X}}.$$
Hence $X_\a=\{\a_{\hat{x}}:\co (X)\lra\2\st \hat{x}\in\hat{X}\}$ and, for every $U\in\CO(X)$ and every $x\in X$, $\a_{\hat{x}}(U)=1\Leftrightarrow \hat{x}\in\a(U)\Leftrightarrow \hat{x}(U)=1$. Thus, $\a_{\hat{x}}=\hat{x}$ for every $x\in X$. Hence, $X_\a=\hat{X}$ and $(\GGG_0\circ\FFF_0)(X)=\widehat{X_\a}=\hat{\hat{X}}$.
According to the general theorem about compositions of adjoint functors (see, e.g., \cite[Theorem IV.8.1]{MacLane}), we have that for every $X\in|\ZH|$, $\tilde{\eta}^{\, 0}_X:X\lra (\GGG_0\circ\FFF_0)(X)$ is defined by the formula $\tilde{\eta}^{\, 0}_X=(G(s\ap_{F(X)}))\inv\circ \hat{h}_X$
(see Theorem \ref{zduality} for $\hat{h}$ and $s\ap$). Since $F(X)=(\co (X),\hat{X})$ and $s\ap_{F(X)}=(\bar{s}_{\CLO(X)}^{\hat{X}},\breve{\imath}_{\hat{X}})$, where $\breve{\imath}_{\hat{X}}:\hat{\hat{X}}\lra\hat{X}$, $\hat{\hat{x}}\mapsto\hat{x}$, we obtain that $G(s\ap_{F(X)})=\breve{\imath}_{\hat{X}}$. Thus, $$\tilde{\eta}^{\, 0}_X(x)=\{\hat{\hat{x}}\},$$ for every $x\in X$.
Finally, note that $\CO(\hat{X})=\a(\CO(X))$ (because $\a=s_{\CLO(X)}^{\hat{X}}$ is an mz-map) and thus $\hat{\hat{x}}:\co (\hat{X})\lra\2$ is defined by $\hat{\hat{x}}(\a(U))=1\Leftrightarrow \hat{x}\in s_{\CLO(X)}^{\hat{X}}(U)\Leftrightarrow \hat{x}(U)=1\Leftrightarrow x\in U$, for every $x\in X$ and every $U\in\CO(X)$.

We will now describe the natural isomorphism $\tilde{\ep}^{\, 0}:\Id_{\MBool}\lra \FFF_0\circ\GGG_0$. For $(\a:A\lra B)\in|\MBool|$, we have that
$$(\FFF_0\circ\GGG_0)(\a)=\FFF_0(\widehat{X_\a})=s_{\CLO(\widehat{X_\a})}^{\widehat{\widehat{X_\a}}}$$
and $s_{\CLO(\widehat{X_\a})}^{\widehat{\widehat{X_\a}}}:\co (\widehat{X_\a})\lra\PPP(\widehat{\widehat{X_\a}})$,
where
$\widehat{\widehat{X_\a}}=\{\widehat{\widehat{\a_x}}:\co (\widehat{X_\a})\lra\2\st \widehat{\a_x}\in \widehat{X_\a}\}$ and, for $U\in\CO(\widehat{X_\a})$, $\widehat{\widehat{\a_x}}(U)=1\Leftrightarrow \widehat{\a_x}\in U$.
Thus $\tilde{\ep}^{\, 0}_\a:\a\lra s_{\CLO(\widehat{X_\a})}^{\widehat{\widehat{X_\a}}}$. The cited above theorem about compositions of adjoint functors gives us that $\tilde{\ep}^{\, 0}_\a=F\ap(s\ap_{G\ap(\a)})\circ\ep\ap_\a$.
We have that $G\ap(\a)=(\co (X_\a),\widehat{X_\a})$ and thus $s\ap_{G\ap(\a)}=(\bar{s}_{\CLO(X_\a)}^{\widehat{X_\a}},\breve{\imath}_{\widehat{X_\a}})$,
where $$\breve{\imath}_{\widehat{X_\a}}:\widehat{\widehat{X_\a}}\lra \widehat{X_\a},\ \ \widehat{\widehat{\a_x}}\mapsto \widehat{\a_x}, \ \mbox{ and } \bar{s}_{\CLO(X_\a)}^{\widehat{X_\a}}:\CLO(X_\a)\lra\PPP(\widehat{X_\a}).$$ Then
$F\ap(s\ap_{G\ap(\a)})=(\bar{s}_{\CLO(X_\a)}^{\widehat{X_\a}},\PPP(\breve{\imath}_{\widehat{X_\a}}))$. Hence,

\medskip

\begin{tabular}{ll}
$\tilde{\ep}^{\, 0}_\a$ & $=(\bar{s}_{\CLO(X_\a)}^{\widehat{X_\a}},\PPP(\breve{\imath}_{\widehat{X_\a}}))\circ\ep\ap_\a$\\
&$=(\bar{s}_{\CLO(X_\a)}^{\widehat{X_\a}},\PPP(\breve{\imath}_{\widehat{X_\a}}))\circ (\bar{s}_A^{X_\a},\widehat{\ep_B})$\\
&$=(\bar{s}_{\CLO(X_\a)}^{\widehat{X_\a}}\circ \bar{s}_A^{X_\a},\PPP(\breve{\imath}_{\widehat{X_\a}})\circ\widehat{\ep_B}),$\\
 \end{tabular}

  \medskip

\noindent where
$\widehat{\ep_B}:B\lra\PPP(\widehat{X_\a})$, $b\mapsto \{\widehat{\a_x}\st x\in\At(B), x\le b\}$.

Now we will define the contravariant functors $\FFF:\ZH\lra \MBool$ and  $\GGG:\MBool\lra\ZH$.

For every $X\in|\ZH|$, we put $$\FFF(X)\df i_X,$$ where $i_X:\co(X)\lra\PPP(X)$ is the inclusion map. Set $\a\df i_X$. Obviously, $\a$ is a z-map. Further, for every $x\in X=\At(\PPP(X))$, $\a_x:\co(X)\lra\2$ and $\a_x(U)=1\Leftrightarrow x\in\a(U)$, for every $U\in\co(X)$. Since $\a(U)=U$, we obtain that $\a_x=\hat{x}$ and thus $X_\a=\hat{X}$. For every $U\in\co(X)$, we have that $s_{\co(X)}^{\hat{X}}(U)=\{\hat{x}\st x\in X,\ \hat{x}(U)=1\}=\{\hat{x}\st x\in U\}=\hat{U}=\hat{h}_X(U)$.
Thus $s_{\co(X)}^{\hat{X}}(\co(X))=\hat{h}_X(\co(X))=\co(\hat{X})$ because $\hat{h}_X:X\lra \hat{X}$ is a homeomorphism (as it is shown in Example \ref{dzalgez}).
Hence, $i_X$ is an mz-map.

For $f\in\ZH(X,Y)$, we set $$\FFF(f)\df (\co(f),\PPP(f)).$$
Obviously, $\FFF(f)$ is a $\MBool$-morphism.

For $(\a:A\lra B)\in|\MBool|$, we put $$\GGG(\a)\df X_\a.$$
Clearly, the set $X_\a$ endowed with the subspace topology from the space $\SSS(A)$ is a $\ZH$-object.

For $(\p,\s)\in\MBool(\a,\a\ap)$, we set $$\GGG(\p,\s)\df f_\s.$$
The fact that $f_\s$ is a continuous map was proved in Theorem \ref{zeq} after the definition of $G\ap$ on the morphisms.

We define a natural isomorphism $\tau:\FFF_0\lra\FFF$ by $$\tau_X\df (id_{\CLO(X)},\bi_X^P)$$ for every $X\in|\ZH|$, where $\bi_X:\hat{X}\lra X$, $\hat{x}\mapsto x$, and $\bi_X^P:\PPP(\hat{X})\lra \PPP(X)$, $\hat{M}\mapsto\{\bi_X(\hat{m})\st \hat{m}\in\hat{M}\}$, (i.e., $\bi_X^P(\hat{M})=M$, for every $M\sbe X$). Indeed, it is obvious that for every $X\in|\ZH|$, $\tau_X:\FFF_0(X)\lra\FFF(X)$ is a $\MBool$-isomorphism and that, for every $f\in\ZH(X,X\ap)$, the diagram
\begin{center}
$\xymatrix{\FFF_0(X\ap)\ar[rr]^{\FFF_0(f)}\ar[d]_{\tau_{X\ap}} && \FFF_0(X)\ar[d]^{\tau_X}\\
\FFF(X\ap)\ar[rr]^{\FFF(f)} && \FFF(X)
}$
\end{center}
is commutative.

Now we define a natural isomorphism $\tau\ap:\GGG_0\lra\GGG$ by $$\tau\ap_\a\df \bi_{X_\a}$$ for every $\a\in|\MBool|$. Indeed, for every $X\in|\ZH|$, the map $\bi_X:\hat{X}\lra X$ is a homeomorphism since $\bi_X=\hat{h}_X\inv$ and the map $\hat{h}_X:X\lra \hat{X}$ is a homeomorphism;
hence, $\tau\ap_\a:\GGG_0(\a)\lra\GGG(\a)$ is a $\ZH$-isomorphism. Also, it is clear that, for every $(\p,\s)\in\MBool(\a,\a\ap)$,
the diagram
\begin{center}
$\xymatrix{\GGG_0(\a\ap)\ar[rr]^{\GGG_0(\p,\s)}\ar[d]_{\tau\ap_{\a\ap}} && \GGG_0(\a)\ar[d]^{\tau\ap_\a}\\
\GGG(\a\ap)\ar[rr]^{\GGG(\p,\s)} && \GGG(\a)
}$
\end{center}
is commutative.

Hence, we obtain that $\tau\ap\ast\tau:\GGG_0\circ\FFF_0\lra\GGG\circ\FFF$, where $(\tau\ap\ast\tau)_X=\tau\ap_{\FFF(X)}\circ\GGG_0(\tau_X\inv)$ for every $X\in|\ZH|$, is a natural isomorphism (see, e.g., \cite[Exercise 6A]{AHS}) and thus
$$\tilde{\eta}=(\tau\ap\ast\tau)\circ\tilde{\eta}^{\, 0}:\Id_{\ZH}\lra\GGG\circ\FFF$$ is a natural isomorphism. Analogously, $\tau\ast\tau\ap:\FFF_0\circ\GGG_0\lra\FFF\circ\GGG$, where $(\tau\ast\tau\ap)_\a=\tau_{\GGG(\a)}\circ\FFF_0((\tau\ap_\a)\inv)$ for every $\a\in|\MBool|$, is a natural isomorphism  and thus
$$\tilde{\ep}=(\tau\ast\tau\ap)\circ\tilde{\ep}^{\, 0}:\Id_{\MBool}\lra\FFF\circ\GGG$$ is a natural isomorphism.
Therefore, $\FFF$ and $\GGG$ are dual equivalences. It is now easy to obtain that, for every $X\in|\ZH|$ and every $x\in X$,
$$\tilde{\eta}_X(x)=\{\hat{x}\}$$ and, for every $(\a:A\lra B)\in|\MBool|$, $$\tilde{\ep}_\a=(\bar{s}_A^{X_\a},\ep_B^\a),$$
where $\ep_B^\a:B\lra\PPP(X_\a)$, $b\mapsto\{\a_x\st x\in\At(B), x\le b\}$, for every $b\in B$.
\end{proof}

\section{The restrictions of $F$ and $\FFF$ to the ca\-tegory $\ZHC$ imply the Stone Duality}

In this section we will derive the Stone Duality Theorem from Theorems \ref{zduality} and \ref{nzduality}. Of course, this is almost a formal act because in the proofs of these theorems  we have already utilized many facts which are parts of the proof of the Stone Duality Theorem. But doing this, we will show that our duality functors can be regarded as extensions of the Stone duality functors.

 Let us denote by $\CDZA$ the full subcategory
of the category $\DZA$ having as objects all compact dz-algebras (see Example \ref{dzalge} for this notion).

\begin{pro}\label{cdza}
The categories $\Bool$ and $\CDZA$ are isomorphic.
\end{pro}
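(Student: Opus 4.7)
The plan is to define two functors $H:\Bool\lra\CDZA$ and $K:\CDZA\lra\Bool$ and verify that $K\circ H=\Id_{\Bool}$ and $H\circ K=\Id_{\CDZA}$ on the nose (not merely up to natural isomorphism), which is what ``isomorphic categories'' requires.

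On objects, I would define $H(A)\df(A,\Bool(A,\2))=(A,X_A)$, which is a compact dz-algebra by Example \ref{dzalge}, and $K(A,X_A)\df A$. These are clearly mutually inverse on objects. On morphisms, for $\varphi\in\Bool(A,A\ap)$ I would set
\[
H(\varphi)\df(\varphi,\SSS(\varphi)):(A,X_A)\lra (A\ap,X_{A\ap}),
\]
where $\SSS(\varphi):X_{A\ap}\lra X_A$ sends $x\ap$ to $x\ap\circ\varphi$ (see \ref{nistone}). The compatibility condition $x\ap\circ\varphi=\SSS(\varphi)(x\ap)$ holds by definition, so $H(\varphi)$ is a $\DZA$-morphism between $\CDZA$-objects, hence a $\CDZA$-morphism (as $\CDZA$ is a full subcategory). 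Conversely, for a $\CDZA$-morphism $(\varphi,f):(A,X_A)\lra(A\ap,X_{A\ap})$ I would put $K(\varphi,f)\df\varphi$.

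The key observation, and the only nontrivial step, is that on $\CDZA$-morphisms the second component is forced by the first: the defining condition for a $\DZA$-morphism requires $f(x\ap)=x\ap\circ\varphi$ for every $x\ap\in X_{A\ap}=\Bool(A\ap,\2)$, which means $f=\SSS(\varphi)$. Hence the assignment $\varphi\mapsto(\varphi,\SSS(\varphi))$ is a bijection between $\Bool(A,A\ap)$ and $\CDZA((A,X_A),(A\ap,X_{A\ap}))$, and $K\circ H=\Id_{\Bool}$, $H\circ K=\Id_{\CDZA}$ hold strictly.

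Functoriality of $H$ is immediate: $H(\id_A)=(\id_A,\id_{X_A})$ is the identity of $(A,X_A)$ in $\DZA$ by Proposition \ref{zboolpro}, and $H(\varphi\ap\circ\varphi)=(\varphi\ap\circ\varphi,\SSS(\varphi\ap\circ\varphi))=(\varphi\ap\circ\varphi,\SSS(\varphi)\circ\SSS(\varphi\ap))=H(\varphi\ap)\circ H(\varphi)$, using the contravariance of $\SSS$ from \ref{nistone} and the composition rule of $\DZA$. Functoriality of $K$ is trivial. No serious obstacle is expected; the proof is essentially a bookkeeping verification that the second components of $\CDZA$-morphisms between compact dz-algebras are redundant data.
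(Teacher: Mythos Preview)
Your proposal is correct and is essentially the same argument as the paper's: the paper defines the functors $E$ and $E^{-1}$ exactly as your $H$ and $K$, and the key (and only nontrivial) point in both is that the compatibility condition forces $f=\SSS(\varphi)$ for any $\CDZA$-morphism $(\varphi,f)$. Your write-up is slightly more explicit about functoriality, but the content is identical.
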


\begin{proof}
Define a functor $E:\Bool\lra\CDZA$ by setting $E(A)\df (A,X_A)$,
for every $A\in|\Bool|$ (see Example \ref{dzalge} (or
\ref{nistone}) for the notation), and $E(\p)\df (\p,\SSS (\p))$, for
every $\Bool$-morphism $\p$. Then, by Example \ref{dzalge},
$E(A)\in|\CDZA|$ for every $A\in|\Bool|$. If $\p\in\Bool(A,A\ap)$,
then $(\SSS (\p))(x\ap)=x\ap\circ\p$  for every $x\ap\in X_{A\ap}$
(see \ref{nistone}). Hence $E(\p)\in\CDZA(E(A),E(A\ap))$.

Define also a functor $E\inv:\CDZA\lra\Bool$ by setting
$E\inv(A,X_A)\df A$, for every $(A,X_A)\in|\CDZA|$, and
$E\inv(\p,f)\df \p$, for every $\CDZA$-morphism $(\p,f)$.
It is easy to see that $E\circ E\inv=\Id_{\CDZA}$ and $E\inv\circ
E=\Id_{\Bool}$. (Indeed, it is enough to notice that if $(\p,f)$ is
a $\CDZA$-morphism then, by the definition of $\SSS (\p)$ (see
\ref{nistone}), we have that $f=\SSS (\p)$.) Thus $E$ and $E\inv$ are
isomorphisms.
\end{proof}

\begin{pro}\label{extstd}
Let $E^s:\ZHC\hookrightarrow\ZH$ and $E^a:\CDZA\hookrightarrow\DZA$ be the inclusion
functors.
Then $$F(E^s(|\ZHC|))\sbe |\CDZA|\ \mbox{ and }\ G(E^a(|\CDZA|))\sbe |\ZHC|.$$
Thus the restrictions $F_s:\ZHC\lra \CDZA$ and  $G_s:\CDZA\lra\ZHC$ of $F$ and $G$, respectively, are dual equivalences.
Also, $\TTT=E\inv\circ F_s$ and $\SSS= G_s\circ E$. Thus, $\TTT$ and $\SSS$ are dual equivalences.
Finally,  $F\circ E^s=E^a\circ E\circ \TTT $ and $E^s\circ\SSS=G\circ E^a\circ E$.
Therefore, the dual equivalences $F$ and $G$ are  extensions of
the dual equivalences $\TTT $ and $\SSS$, respectively.
(See   Theorem \ref{zduality}, Proposition \ref{cdza} and \ref{nistone} for the notation.)
\end{pro}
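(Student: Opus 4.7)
The plan is to verify the five assertions of the proposition in order, each being a direct unpacking of the definitions together with one key input from the classical Stone duality recalled in \ref{nistone}.

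First, to establish $F(E^s(|\ZHC|))\sbe |\CDZA|$, I will take $X\in|\ZHC|$ and show that $F(X)=(\co(X),\hat{X})$ is a compact dz-algebra, i.e.\ that $\hat{X}=\Bool(\co(X),\2)=X_{\co(X)}$. The inclusion $\hat X\sbe X_{\co(X)}$ is immediate since every $\hat x$ is a Boolean homomorphism $\co(X)\to\2$. For the reverse inclusion I will invoke the fact (recalled in \ref{nistone}) that for a Stone space $X$ the map $t_X:X\to\SSS(\co(X))$, $x\mapsto\hat x$, is a homeomorphism, hence surjective, so every Boolean homomorphism $\co(X)\to\2$ has the form $\hat x$ for some $x\in X$. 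Thus $F(X)=(\co(X),X_{\co(X)})\in|\CDZA|$. The dual inclusion $G(E^a(|\CDZA|))\sbe |\ZHC|$ is even easier: for $(A,X_A)\in|\CDZA|$, $G(A,X_A)$ is by definition $X_A=\Bool(A,\2)$ with the subspace topology from $\SSS(A)$, which is all of $\SSS(A)$ itself, a Stone space.

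Next, since by Theorem \ref{zduality} the contravariant functors $F,G$ form a dual equivalence $\ZH\leftrightarrows\DZA$ with natural isomorphisms $\hat h$ and $s'$, and these restrict componentwise to the subcategories $\ZHC$ and $\CDZA$ (the components are themselves $\ZHC$- resp.\ $\CDZA$-isomorphisms in view of the first step), the restrictions $F_s$ and $G_s$ constitute a dual equivalence $\ZHC\leftrightarrows\CDZA$. I will then check the identities $\TTT=E\inv\circ F_s$ and $\SSS=G_s\circ E$ by unpacking definitions. On objects, $E\inv(F_s(X))=E\inv(\co(X),X_{\co(X)})=\co(X)=\TTT(X)$ and $G_s(E(A))=G_s(A,X_A)=X_A=\SSS(A)$. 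On morphisms, for $f\in\ZHC(X,Y)$ one has $E\inv(F_s(f))=E\inv(\co(f),\hat f)=\co(f)=\TTT(f)$, and for $\p\in\Bool(A,B)$ one has $G_s(E(\p))=G_s(\p,\SSS(\p))=\SSS(\p)$, where I use the observation from the proof of Proposition \ref{cdza} that the second coordinate of any $\CDZA$-morphism $(\p,f)$ is forced to be $\SSS(\p)$. Since $F_s,G_s$ form a dual equivalence and $E,E\inv$ are mutually inverse isomorphisms of categories, the compositions $\TTT$ and $\SSS$ are themselves dual equivalences.

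Finally, the two compatibility identities $F\circ E^s=E^a\circ E\circ\TTT$ and $E^s\circ\SSS=G\circ E^a\circ E$ follow at once: both sides of the first send $X\in|\ZHC|$ to $(\co(X),X_{\co(X)})$ regarded inside $\DZA$, and both sides of the second send $A\in|\Bool|$ to $\SSS(A)$ regarded inside $\ZH$; the morphism checks are analogous. This then justifies calling $F$ and $G$ extensions of $\TTT$ and $\SSS$, respectively. The only substantive content in the whole proposition, and the main (very mild) obstacle, is the identification $\hat X=X_{\co(X)}$ for Stone spaces in the first step; everything else is bookkeeping against the definitions of $F$, $G$, $E$, and the functors $\TTT$ and $\SSS$ recalled in \ref{nistone}.
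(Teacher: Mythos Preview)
Your proposal is correct and follows essentially the same approach as the paper. The only cosmetic difference is in how you justify the key identity $\hat X=\Bool(\co(X),\2)$ for a Stone space $X$: you invoke the surjectivity of the homeomorphism $t_X$ from \ref{nistone}, while the paper argues directly that $\bigcap\{U\in\co(X)\st\p(U)=1\}$ is a singleton for each $\p\in\Bool(\co(X),\2)$; these are of course the same fact.
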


\begin{proof}
Let $X\in|\ZHC|$. Then $F(E^s(X))=F(X)=(\co(X),\hat{X})$. Since $X$ is compact, we have, as it is well-known, that $\hat{X}=\Bool(\co(X),\2)$. (Indeed, for every $\p\in \Bool(\co(X),\2)$, $\bigcap\{U\in\co(X)\st\p(U)=1\}$ is a singleton.) Thus $F(E^s(X))\in |\CDZA|$. Further, for every $(A,X_A)\in |\CDZA|$, $G(E^a(A,X_A))=G(A,X_A)=X_A=\SSS(A)$ and, as it is proved by M. Stone \cite{ST}, $G(E^a(A,X_A))\in  |\ZHC|.$ Thus, Theorem \ref{zduality} implies that $F_s$ and $G_s$ are dual equivalences. The equalities $\TTT=E\inv\circ F_s$ and $\SSS= G_s\circ E$ are obvious and hence, $\SSS\circ\TTT= G_s\circ E\circ E\inv\circ F_s=G_s\circ F_s\cong \Id_{\ZHC}$; analogously, $\TTT\circ\SSS\cong \Id_{\Bool}$. Therefore, $\TTT$ and $\SSS$ are dual equivalences.
Finally, we have that $E^a\circ E\circ \TTT= E^a\circ E\circ E\inv\circ F_s=E^a\circ F_s=F\circ E^s$ and $E^s\circ\SSS=E^s\circ G_s\circ E=
G\circ E^a\circ E$.
\end{proof}

\begin{center}
$\xymatrix{\ZH\ar@/^0.6pc/[rr]^{F} & {\scriptstyle \simeq} & \DZA\ar@/^0.6pc/[ll]^{G}\\
&&\ar@/^0.6pc/[lldd]^{G_s}\CDZA\ar@{^(->}[u]^{E^a}\\
&{\scriptstyle \simeq}&\\
\ZHC\ar@{^(->}[uuu]^{E^s}\ar@/^0.6pc/[rr]^{\TTT}\ar@/^0.6pc/[rruu]^{F_s}&{\scriptstyle \simeq}&\Bool\ar[uu]^E\ar@/^0.6pc/[ll]^{\SSS}
}$
\end{center}

We are  now going to derive the Stone Duality Theorem from Theorem \ref{nzduality}.

Let  $\KMBool$ be the full subcategory of the category $\MBool$ having as objects all compact mz-maps (see Example \ref{mzmapek} for this notion).

\begin{pro}\label{mzbool}
 The categories\/ $\Bool$ and\/ $\KMBool$ are isomorphic.
\end{pro}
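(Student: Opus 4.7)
The plan is to mirror the proof of Proposition \ref{cdza}, now using the Stone contravariant functor $\SSS$ from \ref{nistone} and the Tarski contravariant functor $\PPP$ from \ref{tar} together. Define a functor $E : \Bool \lra \KMBool$ on objects by $E(A) \df s_A^{X_A}$ (see Example \ref{mzmapek}) and on morphisms by $E(\p) \df (\p, \PPP(\SSS(\p)))$. Example \ref{mzmapek} gives $E(A) \in |\KMBool|$. For $\p \in \Bool(A,A\ap)$, the map $\PPP(\SSS(\p)) : \PPP(X_A) \lra \PPP(X_{A\ap})$ lies in $\Caba$ by Tarski duality, and a direct check using $\SSS(\p)(x\ap) = x\ap \circ \p$ shows that for $a \in A$ and $x\ap \in X_{A\ap}$,
\begin{equation*}
x\ap \in \PPP(\SSS(\p))(s_A^{X_A}(a)) \Longleftrightarrow (x\ap \circ \p)(a) = 1 \Longleftrightarrow x\ap \in s_{A\ap}^{X_{A\ap}}(\p(a)),
\end{equation*}
so $\PPP(\SSS(\p)) \circ s_A^{X_A} = s_{A\ap}^{X_{A\ap}} \circ \p$ and hence $E(\p) \in \MBool(E(A), E(A\ap))$. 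Functoriality of $E$ is inherited from that of $\SSS$ and $\PPP$.

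For the inverse, define $E\inv : \KMBool \lra \Bool$ by $E\inv(s_A^{X_A}) \df A$ and $E\inv(\p, \s) \df \p$; clearly $E\inv \circ E = \Id_{\Bool}$. The main obstacle, and the only non-routine step, is to show $E \circ E\inv = \Id_{\KMBool}$, i.e., that any $\KMBool$-morphism $(\p, \s) : s_A^{X_A} \lra s_{A\ap}^{X_{A\ap}}$ automatically satisfies $\s = \PPP(\SSS(\p))$. The defining relation $\s \circ s_A^{X_A} = s_{A\ap}^{X_{A\ap}} \circ \p$ only pins $\s$ down on $s_A^{X_A}(A) = \CO(X_A)$, so I would exploit that $\s \in \Caba(\PPP(X_A), \PPP(X_{A\ap}))$ is complete and that $\PPP(X_A)$ is an atomic complete Boolean algebra, making $\s$ determined by its values on atoms.

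Concretely, since $X_A = \SSS(A)$ is a Stone space, for every $x \in X_A$ one has $\{x\} = \bigcap\{s_A(a) : a \in A,\; x(a) = 1\}$ as an element of $\PPP(X_A)$. Applying the complete Boolean homomorphism $\s$ and using the commutation relation gives
\begin{equation*}
\s(\{x\}) = \bigcap\{s_{A\ap}(\p(a)) : x(a) = 1\} = \{x\ap \in X_{A\ap} : x\ap \circ \p = x\} = \SSS(\p)\inv(\{x\}) = \PPP(\SSS(\p))(\{x\}),
\end{equation*}
so $\s$ and $\PPP(\SSS(\p))$ agree on every atom of $\PPP(X_A)$. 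Since both are complete Boolean homomorphisms out of the Caba $\PPP(X_A)$, they coincide. Hence $E(E\inv(\p,\s)) = (\p,\s)$, and $E$, $E\inv$ are mutually inverse functors witnessing the isomorphism $\Bool \cong \KMBool$.
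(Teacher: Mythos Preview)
Your proof is correct and follows essentially the same route as the paper's. Both define the functors $E$ and $E\inv$ identically (the paper calls them $K$ and $K\inv$), and both reduce $E\circ E\inv=\Id_{\KMBool}$ to showing that $\s$ is forced to equal $\PPP(\SSS(\p))$; the paper argues this by noting that $\s$ and $\PPP(\SSS(\p))$ agree on $\CO(X_A)$ and that every atom of $\PPP(X_A)$ is a meet of clopens, whence completeness of $\s$ finishes it, while you carry out that same computation explicitly on each atom $\{x\}$.
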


\begin{proof}
Let us define a functor $K:\Bool\lra\KMBool$ by setting $K(A)\df s_A^{X_A}$ for every $A\in|\Bool|$ (here $X_A=\Bool(A,\2)$),  and $K(\p)\df (\p,\PPP(\SSS(\p)))$, for every $\p\in\Bool(A,A\ap)$. Then Example \ref{mzmapek} shows that $K$ is well-defined on the objects. For proving that $K(\p)$ is a $\KMBool$-morphism, we have to verify the equality $s_{A\ap}^{X_{A\ap}}\circ\p=\PPP(\SSS(\p))\circ s_A^{X_A}$. Let $a\in A$. Then $(\PPP(\SSS(\p))\circ s_A^{X_A})(a)=(\SSS(\p))\inv(s_A^{X_A}(a))=\{x\ap\in X_{A\ap}\st (\SSS(\p))(x\ap)\in s_A^{X_A}(a)\}=\{x\ap\in X_{A\ap}\st x\ap(\p(a))=1\}=(s_{A\ap}^{X_{A\ap}}\circ\p)(a)$. Hence, $K$ is well-defined on morphisms as well. Obviously, $K$ is a functor. (Note that the use of the contravariant functors $\SSS$ and $\TTT$ can be easily avoided; we used them just for a simplification of the notation.)

Let us now define a functor $K\inv:\KMBool\lra \Bool$ by setting $K\inv(s_A^{X_A})\df A$ for every $A\in|\Bool|$, and $K\inv(\p,\s)\df \p$ for every $\KMBool$-morphism $(\p,\s)$. Then, obviously, $K\inv$ is a well-defined functor. It is clear that $K\inv\circ K=\Id_{\Bool}$ and $(K\circ K\inv)(s_A^{X_A})=s_A^{X_A}$ for every $A\in|\Bool|$. For every $\KMBool$-morphism $(\p,\s)$, we have $(K\circ K\inv)(\p,\s)=K(\p)=(\p,\PPP(\SSS(\p))$. Since $s_A^{X_A}\upharpoonright A=s_A:A\lra \co(X_A)$ is a Boolean isomorphism, the above calculation shows that $\s|\co(X_A)\equiv \PPP(\SSS(\p))|\co(X_A)$. Since every atom of $\PPP(X_A)$ (i.e., every element of $X_A$) is a meet in $\PPP(X_A)$ of some elements of $\co(X_A)$  and $\s$ is a complete homomorphisms, we see that $\s$ is uniquely  determined by its restriction on $\co(X_A)$. Therefore, $\s\equiv\PPP(\SSS(\p))$. Thus, $K\circ K\inv=\Id_{\KMBool}$. Hence, the categories\/ $\Bool$ and\/ $\KMBool$ are isomorphic.
\end{proof}

Now, using arguments similar to those used in the proof of Proposition \ref{extstd}, we obtain the following assertion:

\begin{pro}\label{extstdm}
Let  $E^m:\KMBool\hookrightarrow\MBool$ be the inclusion
functor.
Then $$\FFF(E^s(|\ZHC|))\sbe |\KMBool|\ \mbox{ and }\ \GGG(E^m(|\KMBool|))\sbe |\ZHC|.$$
Thus the restrictions\/ $\FFF_s:\ZHC\lra \KMBool$ and  $\GGG_s:\KMBool\lra\ZHC$ of\/ $\FFF$ and $\GGG$, respectively, are dual equivalences.
Also, $\TTT=K\inv\circ \FFF_s$ and\/ $\SSS= \GGG_s\circ K$. Thus, $\TTT$ and\/ $\SSS$ are dual equivalences.
Finally,  $\FFF\circ E^s=E^m\circ K\circ \TTT $ and\/ $E^s\circ\SSS=\GGG\circ E^m\circ K$.
Therefore, the dual equivalences $\FFF$ and\/ $\GGG$ are  extensions of
the dual equivalences\/ $\TTT $ and\/ $\SSS$, respectively.
(See   Theorem \ref{nzduality}, Propositions \ref{mzbool} and \ref{extstd},  and \ref{nistone} for the notation.)
\end{pro}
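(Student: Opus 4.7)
The plan is to parallel the argument of Proposition~\ref{extstd}, with $\KMBool$ replacing $\CDZA$ and $K$ replacing $E$. First I would verify the two object-level inclusions. For $X\in|\ZHC|$, we have $\FFF(X)=i_X:\co(X)\lra\PPP(X)$, and compactness of $X$ forces every Boolean homomorphism $\co(X)\lra\2$ to be of the form $\hat{x}$ for some $x\in X$, so the homeomorphism $\hat{h}_X:X\lra\hat{X}$ identifies $X$ with $X_{\co(X)}=\Bool(\co(X),\2)$; under this identification, $i_X$ becomes exactly $s_{\co(X)}^{X_{\co(X)}}$, which lies in $|\KMBool|$ by Example~\ref{mzmapek}. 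Conversely, for a compact mz-map $\a=s_A^{X_A}$, the calculation at the beginning of the proof of Proposition~\ref{mzbool} gives $\a_y\equiv y$ for every $y\in X_A$, hence $\GGG(\a)=X_\a=X_A=\SSS(A)$, which is a Stone space. So $\FFF$ and $\GGG$ restrict to functors $\FFF_s:\ZHC\lra\KMBool$ and $\GGG_s:\KMBool\lra\ZHC$.

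Restricting the natural isomorphisms $\tilde{\eta}$ and $\tilde{\ep}$ of Theorem~\ref{nzduality} to these subcategories produces natural isomorphisms $\GGG_s\circ\FFF_s\cong\Id_{\ZHC}$ and $\FFF_s\circ\GGG_s\cong\Id_{\KMBool}$, so $\FFF_s$ and $\GGG_s$ are a dual equivalence. The two functorial identities then follow by direct unwinding: on objects $K\inv(\FFF_s(X))=K\inv(s_{\co(X)}^{X_{\co(X)}})=\co(X)=\TTT(X)$ and $\GGG_s(K(A))=\GGG_s(s_A^{X_A})=X_A=\SSS(A)$; on morphisms $K\inv(\FFF_s(f))=K\inv(\co(f),\PPP(f))=\co(f)=\TTT(f)$ and $\GGG_s(K(\p))=\GGG_s(\p,\PPP(\SSS(\p)))=f_{\PPP(\SSS(\p))}=\SSS(\p)$. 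Composing yields $\SSS\circ\TTT\cong\GGG_s\circ K\circ K\inv\circ\FFF_s=\GGG_s\circ\FFF_s\cong\Id_{\ZHC}$ and $\TTT\circ\SSS\cong\Id_{\Bool}$, recovering the Stone duality.

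For the extension statement, $\FFF\circ E^s=E^m\circ\FFF_s=E^m\circ K\circ(K\inv\circ\FFF_s)=E^m\circ K\circ\TTT$ and, dually, $E^s\circ\SSS=E^s\circ\GGG_s\circ K=\GGG\circ E^m\circ K$, displaying $\FFF$ and $\GGG$ as extensions of $\TTT$ and $\SSS$ along $K$ and the inclusions $E^s,E^m$. The only genuinely delicate step is the first inclusion: one must check that the bijection $\hat{h}_X$ transports $i_X$ to $s_{\co(X)}^{X_{\co(X)}}$ not only pointwise but functorially in $X$, so that $\FFF_s$ really lands in $\KMBool$ with respect to the same identifications used throughout Section~4. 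This reduces to the routine calculation $s_{\co(X)}^{X_{\co(X)}}(U)=\{\hat{x}\st x\in U\}=\hat{h}_X(U)$ for $U\in\co(X)$, together with naturality of $\hat{h}_{(-)}$.
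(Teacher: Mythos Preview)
Your proposal is correct and follows exactly the approach the paper indicates: the paper's entire proof consists of the sentence ``using arguments similar to those used in the proof of Proposition~\ref{extstd}'', and you carry out precisely that parallel, replacing $\CDZA$ by $\KMBool$ and $E$ by $K$. Your treatment is in fact more detailed than the paper's, and you correctly flag the one point the paper glosses over: the inclusion $\FFF(E^s(|\ZHC|))\sbe|\KMBool|$ is not literal, since $\FFF(X)=i_X$ has codomain $\PPP(X)$ while a compact mz-map has codomain $\PPP(X_{\co(X)})$; it holds only after transporting along the bijection $\hat{h}_X$, which you handle.
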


\begin{center}
$\xymatrix{\ZH\ar@/^0.6pc/[rr]^{\FFF} & {\scriptstyle \simeq} & \MBool\ar@/^0.6pc/[ll]^{\GGG}\\
&&\ar@/^0.6pc/[lldd]^{G_s}\KMBool\ar@{^(->}[u]^{E^m}\\
&{\scriptstyle \simeq}&\\
\ZHC\ar@{^(->}[uuu]^{E^s}\ar@/^0.6pc/[rr]^{\TTT}\ar@/^0.6pc/[rruu]^{F_s}&{\scriptstyle \simeq}&\ar@/^0.6pc/[ll]^{\SSS}\Bool\ar[uu]^K
}$
\end{center}

\section{The restrictions of $F$ and $\FFF$ to the ca\-tegory $\Di$ imply the Tarski Duality}

We are going to derive the Tarski Duality Theorem from Theorems \ref{zduality} and \ref{nzduality}. Unlike the previous section, this is not a formal act: it is true that in the proof of Theorem \ref{nzduality} we utilized some facts which are parts of the proof of the Tarski Duality Theorem, but the proof of Theorem \ref{zduality} is completely independent of the Tarski Duality Theorem.

 It is clear that the category $\Di$ of discrete spaces and continuous maps is a full subcategory of the category $\ZH$. Using the duality theorems proved in  Sections 3 and 4, we will find two categories dually equivalent to the category $\Di$. Since, obviously, the categories $\Di$ and $\Set$ are isomorphic, we will obtain in this way two
categories dually equivalent to the category $\Set$. Both of them will lead to one and the same  dual equivalence $\AAA:\Caba\lra\Set$ which will be slightly different from the Tarski dual equivalence $\Att:\Caba\lra\Set$ (and, maybe, will be new). From it we will easily obtain the Tarski Duality Theorem.

Let us denote by $\TDZA$ the full subcategory of the category $\DZA$ having as objects all T-algebras (see Example \ref{dzalget} for this notion), and
 let $\TMBool$ be the full subcategory of the category $\MBool$ having as objects all T-maps (see Example \ref{mzmapt} for this notion).

\begin{pro}\label{resttar}
 The categories\/ $\TDZA$ and\/ $\TMBool$ are dually equivalent to the category\/ $\Di$ (and, thus, to the category\/ $\Set$).
\end{pro}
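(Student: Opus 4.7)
My plan is to derive the proposition by restricting the dual equivalences $F\colon\ZH\to\DZA$ and $\FFF\colon\ZH\to\MBool$ of Theorems \ref{zduality} and \ref{nzduality} to the full subcategory $\Di$ on one side, and to the full subcategories $\TDZA\subseteq\DZA$ and $\TMBool\subseteq\MBool$ on the other. The parenthetical isomorphism $\Di\cong\Set$ is obtained at once from the forgetful functor, since every set carries a unique discrete topology and every function between discrete spaces is automatically continuous.

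First I would verify that $F$ sends $|\Di|$ into $|\TDZA|$ and that $G$ sends $|\TDZA|$ into $|\Di|$. For $X\in|\Di|$, the algebra $\co(X)=\PPP(X)$ lies in $|\Caba|$ with $\At(\PPP(X))=X$, and under this identification the homomorphism $\hat{x}\colon\PPP(X)\to\2$ of \ref{hats} coincides with $\check{x}$; hence $F(X)=(\PPP(X),\check{X}_{\PPP(X)})\in|\TDZA|$. Conversely, for $(B,\check{X}_B)\in|\TDZA|$, the subspace $\check{X}_B$ of $\SSS(B)$ is discrete: as noted in Example \ref{dzalget}, $s_B^{\check{X}_B}(x)=\{\check{x}\}$ for each atom $x\in B$, so every singleton in $\check{X}_B$ is open. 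Since $\Di$ and $\TDZA$ are full subcategories of $\ZH$ and $\DZA$, respectively, and the natural isomorphisms $\hat{h}\colon\Id_\ZH\to G\circ F$ and $s\ap\colon\Id_\DZA\to F\circ G$ of Theorem \ref{zduality} restrict componentwise to them, the restrictions $F|_\Di$ and $G|_\TDZA$ constitute a dual equivalence.

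The argument for the pair $(\FFF,\GGG)$ is entirely parallel. For $X\in|\Di|$, the inclusion $i_X\colon\co(X)\to\PPP(X)$ defining $\FFF(X)$ is literally $id_{\PPP(X)}$ because $\co(X)=\PPP(X)$, so it is a T-map in the sense of Example \ref{mzmapt}; hence $\FFF(X)\in|\TMBool|$. For an object $id_B\in|\TMBool|$ with $B\in|\Caba|$, a direct unwinding of the definition in \ref{hats} yields $(id_B)_x=\check{x}$ for each $x\in\At(B)$, whence $X_{id_B}=\check{X}_B$, which is discrete by the same computation. Fullness of the subcategories together with the restriction of the natural isomorphisms $\tilde{\eta}$ and $\tilde{\ep}$ of Theorem \ref{nzduality} then produces the dual equivalence between $\Di$ and $\TMBool$.

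The only substantive step is the notational bookkeeping in \ref{hats}: verifying that $\hat{x}$, $\check{x}$, and $(id_B)_x$ all coincide under the canonical identifications $X=\At(\PPP(X))$ (respectively, for $id_B$), and that the discreteness of $\check{X}_B$ falls out of the equality $s_B^{\check{X}_B}(x)=\{\check{x}\}$ at atoms. Once these identifications are in hand, the proposition is a formal consequence of Theorems \ref{zduality} and \ref{nzduality} together with the triviality $\Di\cong\Set$, and no argument beyond passage to full subcategories is required.
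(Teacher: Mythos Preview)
Your proposal is correct and follows essentially the same approach as the paper: both proofs restrict the dual equivalences of Theorems \ref{zduality} and \ref{nzduality} to the full subcategories $\Di$, $\TDZA$, and $\TMBool$ by verifying the four inclusions $F(|\Di|)\subseteq|\TDZA|$, $G(|\TDZA|)\subseteq|\Di|$, $\FFF(|\Di|)\subseteq|\TMBool|$, $\GGG(|\TMBool|)\subseteq|\Di|$, using the identifications $\co(X)=\PPP(X)$, $\hat{x}=\check{x}$, $i_X=id_{\PPP(X)}$, and $X_{id_B}=\check{X}_B$ together with the discreteness observation from Example \ref{dzalget}. Your write-up is slightly more explicit about why the restriction of a dual equivalence to full subcategories again yields a dual equivalence, but otherwise the arguments are identical.
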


\begin{proof}
Using the notation from the proofs of Theorems \ref{zduality} and \ref{nzduality}, it is enough to show that $F(|\Di|)\sbe |\TDZA|$, $G(|\TDZA|)\sbe|\Di|$,
$\FFF(|\Di|)\sbe |\TMBool|$ and $\GGG(|\TMBool|)\sbe|\Di|$.

We have that for every $X\in|\Di|$, $F(X)=(\co(X),\hat{X})=(\PPP(X),\hat{X})=(B,\check{X}_B)$, where $B\df\PPP(X)$, and, obviously, $(B,\check{X}_B)\in|\TDZA|$. Also, $\FFF(X)=i_X=id_{\, \PPP(X)}\in|\TMBool|$. Further, for every $(B,\check{X}_B)\in|\TDZA|$, $G(B,\check{X}_B)=\check{X}_B$, where $\check{X}_B$ is regarded as a subspace of $\SSS(B)$. Then, as it was shown in Example \ref{dzalget}, $\check{X}_B\in|\Di|$. Finally, for every $id_B\in|\TMBool|$, $\GGG(id_B)=X_{id_B}=\check{X}_B\in|\Di|$. Now  Theorems \ref{zduality} and \ref{nzduality} show that the restrictions $F_d:\Di\lra\TDZA$, $G_d:\TDZA\lra\Di$,
 $\FFF_d:\Di\lra\TMBool$, $\GGG_d:\TMBool\lra\Di$ of the contravariant functors $F$, $G$, $\FFF$ and $\GGG$, respectively, are all dual equivalences.
\end{proof}

\begin{cor}\label{cortar}
 For every\/ $\TDZA$-morphism $(\s,f)$ between any two $\TDZA$-objects   $(B,\check{X}_B)$ and $(B\ap,\check{X}_{B\ap})$, we have that $\s\in\Caba(B,B\ap)$.
\end{cor}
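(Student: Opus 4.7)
The plan is to show that $\s$ has a left adjoint as an order-preserving map, so that $\s$ automatically preserves arbitrary meets; combined with the fact that $\s$ is already a Boolean homomorphism (hence preserves complements), this will force $\s$ to preserve arbitrary joins as well, giving $\s\in\Caba(B,B')$.

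First, I would unpack the compatibility condition $\check{x'}\circ\s=f(\check{x'})$ built into the definition of a $\DZA$-morphism. Since $f$ maps $\check{X}_{B'}$ into $\check{X}_B$ and $\check{h}_B,\check{h}_{B'}$ are bijections (see \ref{hats}), there is a uniquely determined function $g:\At(B')\to\At(B)$ such that $f(\check{x'})=\check{y}$ with $y=g(x')$. Evaluating the identity $\check{x'}\circ\s=\check{y}$ at an arbitrary $b\in B$ then yields the basic equivalence
\begin{equation*}
x'\le\s(b)\iff g(x')\le b,\qquad x'\in\At(B'),\ b\in B.
\end{equation*}

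Second, using that $B\in|\Caba|$ (by Example \ref{dzalget}) and that $B'$ is atomic, I would define the extension $g^*:B'\to B$ by $g^*(b')\df\bigvee\{g(x'):x'\in\At(B'),\ x'\le b'\}$. By atomicity of $B'$, an element $b'\in B'$ satisfies $b'\le\s(b)$ iff every atom under $b'$ is under $\s(b)$, which, by the displayed equivalence, happens iff $g(x')\le b$ for every such atom, i.e.\ iff $g^*(b')\le b$. Hence $g^*\dashv\s$ as monotone maps between the complete Boolean lattices $B'$ and $B$.

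Finally, as a right adjoint, $\s$ preserves all meets; being a Boolean homomorphism, it also preserves complements, and the De Morgan identity then forces $\s$ to preserve arbitrary joins as well. Therefore $\s$ is a complete Boolean homomorphism, that is, $\s\in\Caba(B,B')$. The only point needing attention is the lift of the equivalence from atoms of $B'$ to arbitrary elements, which relies on atomicity of $B'$ but is otherwise routine; no substantive obstacle arises.
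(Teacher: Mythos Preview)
Your argument is correct. The key step---extracting from the compatibility $\check{x'}\circ\s=f(\check{x'})$ a map $g:\At(B')\to\At(B)$ with $x'\le\s(b)\iff g(x')\le b$, and then extending $g$ by joins to a left adjoint $g^*\dashv\s$---is sound, and the De~Morgan finish is routine.

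Your route differs from both arguments the paper offers. The paper's primary proof is categorical: it invokes Proposition~\ref{resttar}, observing that the dual equivalence $F_d:\Di\lra\TDZA$ is full, faithful and isomorphism-dense and that $F_d$ sends every $\Di$-morphism $f$ to $(\PPP(f),\hat f)$ with $\PPP(f)\in\Caba$; hence every $\TDZA$-morphism has first component a $\Caba$-map (modulo $\Bool$-isomorphisms, which between complete Boolean algebras are automatically complete). The paper then supplies a second, direct proof by contradiction: if some join is not preserved, pick an atom $y\in\At(B')$ in the gap and use that $\check{y}\circ\s=f(\check{y})\in\check{X}_B$ is a complete homomorphism to derive a contradiction. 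Your proof shares with this second argument the same atom-level equivalence $x'\le\s(b)\iff g(x')\le b$, but instead of arguing by contradiction you package it as a Galois connection $g^*\dashv\s$. This is more conceptual---it explains \emph{why} $\s$ is complete (it is a right adjoint)---and yields the preservation of meets and joins in one stroke, whereas the paper's direct argument is more elementary and avoids adjunction language. The paper's categorical proof, in turn, is the shortest but depends on the duality of Proposition~\ref{resttar} already being in hand.
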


\begin{proof}
For every $f\in\Di(X,Y)$, we have that $F_d(f)=F(f)=(\co(f),\hat{f})=(\PPP(f),\hat{f})$. Since $\PPP(f)$ is a $\Caba$-morphism and $F_d$ is full, faithful and isomorphism-dense, our assertion follows.
\end{proof}

\medskip

We can prove this assertion directly, as well. Suppose that $\s$ is not a complete homomorphism. Then there exists a set $\{b_j\st j\in J\}\sbe B$ such that, with $b\df\bigvee_{j\in J} b_j$, $\s(b)\gneqq \bigvee_{j\in J}\s(b_j)$. Thus, there exists $y\in\At(B\ap)$ such that $y\le\s(b)$ but $y\nleq b\ap$, where $b\ap\df \bigvee_{j\in J}\s(b_j)$. Then $\check{y}(b\ap)=0$ and $\check{y}(\s(b))=1$. Since $\check{y}$ is a complete homomorphism (see \ref{hats}), we have that
$0=\check{y}(b\ap)=\check{y}(\bigvee_{j\in J}\s(b_j))=\bigvee_{j\in J}\check{y}(\s(b_j))=\bigvee_{j\in J}(\check{y}\circ\s)(b_j))$. Hence, $\bigvee_{j\in J}(\check{y}\circ\s)(b_j))\neq (\check{y}\circ\s)(\bigvee_{j\in J} b_j)$. Since $\check{y}\circ\s$  is a complete homomorphism  (because $\check{y}\circ\s=f(\check{y})\in\check{X}_B$), we obtain a contradiction. Therefore, $\s\in\Caba(B,B\ap)$.

\begin{nist}\label{newtar}
\rm
Using the above Corollary, we can define a functor $$H: \TDZA\lra\Caba$$ setting $H(B,\check{X}_B)\df B$ and $H(\s,f)\df \s$. Let us also define a functor
$$H\inv:\Caba\lra\TDZA$$ by $H\inv(B)\df (B,\check{X}_B)$ and, for any $\s\in\Caba(B,B\ap)$, $H\inv(\s)\df (\s,f^\s)$, where the function $$f^\s:\check{X}_{B\ap}\lra \check{X}_B$$ is defined by $$f^\s(\check{y})\df \check{y}\circ\s,$$ for every $\check{y}\in\check{X}_{B\ap}$. We need to show that $f^\s(\check{y})$ belongs to $\check{X}_B$. Indeed, setting $x\df \bigwedge\{a\in B\st y\le\s(a)\}$, we have that $x\in\At(B)$ and, using Lemma \ref{taroblm}, we obtain that   for every $b\in B$, $\check{x}(b)=1\Leftrightarrow x\le b\Leftrightarrow \bigwedge\{a\in B\st y\le\s(a)\}\le b\Leftrightarrow y\le\s(b)\Leftrightarrow\check{y}(\s(b))=1$. Thus $f^\s(\check{y})=\check{y}\circ\s=\check{x}\in \check{X}_B$. Hence, the functor $H\inv$ is well defined. One sees immediately that the compositions of the functors $H$ and $H\inv$ are equal to the corresponding identity functors. Therefore, $H$ and $H\inv$ are isomorphisms. Denoting by $I:\Di\lra\Set$ the obvious forgetful functor, we obtain that $I$ is an isomorphism and $H\circ F_d\circ I\inv=\PPP$. Now we set $$\AAA\df I\circ G_d\circ H\inv.$$ Using  Proposition \ref{resttar}, we obtain that $\PPP\circ\AAA=(H\circ F_d\circ I\inv)\circ(I\circ G_d\circ H\inv)=H\circ (F_d\circ G_d)\circ H\inv\cong H\circ \Id_{\TDZA}\circ H\inv=\Id_{\Caba}$ and, similarly, $\AAA\circ\PPP\cong \Id_{\Set}$. Thus, {\em the contravariant functors
$$\PPP:\Set\lra\Caba\ \mbox{ and } \ \AAA:\Caba\lra\Set$$ are dual equivalences.} Note that for every $B\in|\Caba|$, $$\AAA(B)=\check{X}_B,$$
where $\check{X}_B=\{\check{x}:B\lra\2\st x\in\At(B)\}$, $\check{x}(b)=1\Leftrightarrow x\le b$, and, for every $\s\in\Caba(B,B\ap)$, $$\AAA(\s)=f^\s$$
(see the definition of $f^\s$ here above).
It is easy to see that $\check{h}:\Att\lra\AAA$, where for every $B\in|\Caba|$, $\check{h}_B$ is the bijection defined in \ref{hats}, is a natural isomorphism. Thus, $\PPP\circ\Att\cong\PPP\circ\AAA\cong \Id_{\Caba}$ and, similarly, $\Att\circ\PPP\cong \Id_{\Set}$. Therefore, $\Att:\Caba\lra\Set$ and $\PPP:\Set\lra\Caba$ are dual equivalences, {\em obtaining in such a way a new proof of the Tarski Duality Theorem.}

Finally, defining a functor $H_1: \TMBool\lra\Caba$ by $H_1(id_B)\df B$ and $H_1(\s,\s)\df\s$, and a functor $H_1\inv:\Caba\lra\TMBool$ by $H\inv(B)\df id_B$ and  $H_1\inv(\s)\df (\s,\s)$ (note that Example \ref{dzalget} shows that $H_1\inv$ is well defined), we obtain that the compositions of the functors $H_1$ and $H_1\inv$ are equal to the corresponding identity functors. Therefore, $H_1$ and $H_1\inv$ are isomorphisms. Obviously, we get that  $H_1\circ \FFF_d\circ I\inv=\PPP$ and $\AAA= I\circ \GGG_d\circ H_1\inv.$ Hence, working with the contravariant functors $\FFF_d$ and $\GGG_d$, we come to the same dual equivalences $\PPP:\Set\lra\Caba$ and $\AAA:\Caba\lra\Set$.
\end{nist}

\section{Two duality theorems for the category $\EDT$ of extremally disconnected  spaces}

Now, using our duality theorems \ref{zduality} and \ref{nzduality}, we will obtain duality theorems for the category $\EDT$ of extremally disconnected Tychonoff spaces and continuous maps.

\begin{defi}\label{dzc}
\rm
A dz-algebra (resp., z-algebra) $(A,X)$ is said to be {\em complete dz-algebra}\/ (resp., {\em complete z-algebra}\/) if $A$ is a complete Boolean algebra.
Let us denote by $\DZCB$ the full subcategory of the category $\DZA$ having as objects all complete dz-algebras.
Let $\ZCB$ be the full subcategory of the category $\ZB$ having as objects of all complete z-algebras, and
let $\EDT$ be the category of extremally disconnected Tychonoff spaces and continuous maps.
\end{defi}

\begin{theorem}\label{zdualityed}
The categories $\EDT$ and $\ZCB$ are dually equivalent.
\end{theorem}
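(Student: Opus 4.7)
The plan is to derive Theorem \ref{zdualityed} by restricting the dual equivalence $F:\ZH\lra\DZA$, $G:\DZA\lra\ZH$ of Theorem \ref{zduality} to appropriate full subcategories, so most of the work is already done --- only two easy observations and one key lemma are needed. I first plan to verify that $\EDT$ is a full subcategory of $\ZH$: given $x\in X\in\EDT$ and a closed $F$ with $x\notin F$, Urysohn's lemma yields a continuous $f:X\lra[0,1]$ with $f(x)=0$ and $f(F)\sbe\{1\}$; then $\cl(f\inv([0,\tfrac12)))$ is clopen by extremal disconnectedness, contains $x$, and misses $F$, so $X$ is zero-dimensional. Next I would check that $F$ sends $\EDT$ into $\ZCB$: for $X\in\EDT$, $F(X)=(\co(X),\hat{X})$, and since $X$ is ED we have $\co(X)=\RC(X)$, which is complete by \ref{RC}.

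The crucial step --- and the main obstacle --- is to prove the converse, flagged in the introduction as the raison d'\^etre of the theorem: every complete z-algebra is automatically a dz-algebra whose associated space lies in $\EDT$. Fix $(A,X)\in\ZCB$. Since $A$ is a complete Boolean algebra, the Stone space $\SSS(A)$ is compact Hausdorff and extremally disconnected (its clopen algebra $s_A(A)\cong A$ is complete). Because $X$ is dense in $\SSS(A)$ by Fact \ref{zalgf} and dense subspaces of ED spaces are ED, the space $X=G(A,X)$ is ED; being zero-dimensional Hausdorff it is Tychonoff, so $G(A,X)\in\EDT$. For the dz condition, take any clopen $U\sbe X$ and write $U=X\cap V$ with $V$ open in $\SSS(A)$; density of $X$ gives $\cl_{\SSS(A)}(U)=\cl_{\SSS(A)}(V)$, which is regular closed and therefore clopen in the ED space $\SSS(A)$. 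Hence $\cl_{\SSS(A)}(U)=s_A(a)$ for some $a\in A$, and since $U$ is closed in $X$, $U=X\cap\cl_{\SSS(A)}(U)=s_A^X(a)$; combined with Fact \ref{zalgfff}, this yields $\co(X)=s_A^X(A)$, so $(A,X)$ is a dz-algebra.

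With these steps in hand the conclusion is formal: $\ZCB$ coincides with the full subcategory $\DZCB$ of $\DZA$, and the restrictions $F|_{\EDT}:\EDT\lra\ZCB$ and $G|_{\ZCB}:\ZCB\lra\EDT$ are well-defined. The natural isomorphisms $\hat{h}:\Id_{\ZH}\lra G\circ F$ and $s\ap:\Id_{\DZA}\lra F\circ G$ from the proof of Theorem \ref{zduality} then restrict to natural isomorphisms between the identity functors on $\EDT$ and $\ZCB$ and the composites of the restricted functors, giving the desired dual equivalence $\EDT\simeq\ZCB^{\mathrm{op}}$.
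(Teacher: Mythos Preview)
Your proof is correct and follows essentially the same approach as the paper: restrict the dual equivalence of Theorem \ref{zduality} to the full subcategories $\EDT\sbe\ZH$ and $\DZCB\sbe\DZA$, check that $F$ and $G$ respect these subcategories (using $\CO(X)=\RC(X)$ for ED spaces and that dense subspaces of ED spaces are ED), and then show $\ZCB=\DZCB$. The only minor differences are that you spell out why ED Tychonoff spaces are zero-dimensional (the paper takes this for granted) and that for the key identity $s_A^X(A)=\CO(X)$ you give a direct clopen-lifting argument, whereas the paper simply invokes Lemma \ref{isombool} to write $X\cap\RC(\SSS(A))=\RC(X)=\CO(X)$; the content is the same.
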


\begin{proof}
Since $\EDT$ is a subcategory of $\ZH$, we can regard the restriction $F_{ed}$ of the contravariant functor $F:\ZH\lra\DZA$ to $\EDT$. Analogously, we can regard the restriction $G_{ed}$ of the contravariant functor
$G:\DZA\lra\ZH$ to $\DZCB$. Recall that $F$ and $G$ were defined in the proof of Theorem \ref{zduality}. We will show that $F_{ed}(|\EDT|) \sbe|\DZCB|$ and $G_{ed}(|\DZCB|)\sbe|\EDT|$. Indeed, for every $X\in|\EDT|$, we have that $\CO(X)=\RC(X)$ and thus $F_{ed}(X)=(\co(X),\hat{X})=(\RC(X),\hat{X})$. Hence, $F(X)\in |\DZCB|$. If $(A,X)\in|\DZCB|$, then $G_{ed}(A,X)=X$. Since, by Fact \ref{zalgf}, $X$ is a dense subspace of the extremally disconnected space $\SSS(A)$, we obtain that $X$ is an extremally disconnected space (see, e.g., \cite[Exercise 6.2.G.(c)]{E}). Thus, $G_{ed}(A,X)\in|\EDT|$.
Now, Theorem \ref{zduality} implies that $$F_{ed}:\EDT\lra\DZCB \ \mbox{ and } \ G_{ed}:\DZCB\lra\EDT$$ are dual equivalences.
Finally, we will show that the categories $\DZCB$ and $\ZCB$ coincide. Indeed, if $(A,X)\in|\ZCB|$, then, using Lemma \ref{isombool}, we obtain that $s_A^X(A)=X\cap s_A(A)=X\cap\co(\SSS(A))=X\cap\RC(\SSS(A))=\RC(X)=\co(X)$. Therefore, $(A,X)$ is a dz-algebra. Thus, the categories $\EDT$ and\/ $\ZCB$ are dually equivalent.
\end{proof}

\begin{defi}\label{mzc}
\rm
An mz-map (resp., z-map) $\a:A\lra B$ is said to be {\em complete mz-map}\/ (resp., {\em complete z-map}\/) if $A$ is a complete Boolean algebra.
Let us denote by $\CMZM$ the full subcategory of the category $\MBool$ having as objects all complete mz-maps,
and by $\CZM$  the full subcategory of the category $\ZBool$ having as objects of all complete z-maps.
\end{defi}

\begin{theorem}\label{mzdualityed}
The categories $\EDT$ and $\CZM$ are dually equivalent.
\end{theorem}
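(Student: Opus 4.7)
The plan is to mimic the proof of Theorem \ref{zdualityed}, but this time restrict the dual equivalences $\FFF:\ZH\lra\MBool$ and $\GGG:\MBool\lra\ZH$ of Theorem \ref{nzduality} rather than $F$ and $G$. Concretely, I will introduce the restrictions $\FFF_{ed}$ of $\FFF$ to $\EDT$ and $\GGG_{ed}$ of $\GGG$ to $\CZM$, verify that these take values in the required subcategories, and then invoke Theorem \ref{nzduality} to conclude that they are dual equivalences between $\EDT$ and $\CZM$.

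First I would check that $\FFF_{ed}(|\EDT|)\sbe|\CZM|$: for $X\in|\EDT|$, we have $\FFF(X)=i_X:\CO(X)\hookrightarrow\PPP(X)$ (in the notation of the proof of Theorem \ref{nzduality}), and since $X$ is extremally disconnected one has $\CO(X)=\RC(X)$, so $\CO(X)$ is a complete Boolean algebra; hence $i_X$ is an mz-map whose domain is complete, i.e.\ an element of $\CMZM\sbe\CZM$. Next I would check that $\GGG_{ed}(|\CZM|)\sbe|\EDT|$: for $(\a:A\lra B)\in|\CZM|$ we have $\GGG(\a)=X_\a$ regarded as a subspace of $\SSS(A)$. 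Since $A$ is complete, $\SSS(A)$ is an extremally disconnected compact Hausdorff space; as $(A,X_\a)$ is a z-algebra (Fact \ref{zalgf} applies once one observes that injectivity of $\a$ combined with every atom of $B$ being a meet of elements of $\a(A)$ provides, for each $a\in A^+$, some $\a_y\in X_\a$ with $\a_y(a)=1$), the set $X_\a$ is dense in $\SSS(A)$, and therefore is itself Tychonoff and extremally disconnected (the latter by \cite[Exercise 6.2.G(c)]{E}, the same fact used in Theorem \ref{zdualityed}).

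The main obstacle, and the one genuinely new point of the proof, is to show that $\CZM=\CMZM$, i.e.\ that every \emph{complete} z-map is automatically an mz-map (explaining the parenthetical remark in the paper that the expected condition of ``completeness of the mz-map'' is already redundant). Given $(\a:A\lra B)\in|\CZM|$, I would attach to it the complete z-algebra $(A,X_\a)$ (with $X_\a$ as a subspace of $\SSS(A)$), which is an object of $\ZCB$ by the previous paragraph. Now I invoke the key calculation from the proof of Theorem \ref{zdualityed}: since $A$ is complete, $\SSS(A)$ is extremally disconnected, so by Lemma \ref{isombool} applied to the dense subspace $X_\a\sbe\SSS(A)$ one has
\[ s_A^{X_\a}(A)=X_\a\cap s_A(A)=X_\a\cap\CO(\SSS(A))=X_\a\cap\RC(\SSS(A))=\RC(X_\a)=\CO(X_\a).\]
By the very definition of an mz-map, this says that $\a$ is an mz-map, so $\a\in|\CMZM|$, proving $\CZM\sbe\CMZM$; the reverse inclusion is trivial.

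Combining these three observations, the restrictions $\FFF_{ed}:\EDT\lra\CZM$ and $\GGG_{ed}:\CZM\lra\EDT$ are well defined, and Theorem \ref{nzduality} (together with $\CZM=\CMZM$ so that fullness and faithfulness are inherited from full-subcategory inclusions on both sides) yields that they form a dual equivalence between $\EDT$ and $\CZM$, completing the proof. The natural isomorphisms $\tilde{\eta}$ and $\tilde{\ep}$ of Theorem \ref{nzduality} restrict without change to provide the required adjunction data.
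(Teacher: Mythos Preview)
Your proposal is correct and follows essentially the same approach as the paper: restrict $\FFF$ and $\GGG$, verify that images land in the right subcategories using $\CO(X)=\RC(X)$ for $X\in|\EDT|$ and density of $X_\a$ in $\SSS(A)$, and prove $\CZM=\CMZM$ via the Lemma~\ref{isombool} calculation $s_A^{X_\a}(A)=X_\a\cap\RC(\SSS(A))=\RC(X_\a)=\CO(X_\a)$. The only cosmetic difference is ordering: the paper first restricts $\GGG$ to $\CMZM$ (which is already a subcategory of $\MBool$, so the restriction is immediately meaningful), obtains the duality with $\CMZM$, and then identifies $\CMZM$ with $\CZM$; you instead restrict $\GGG$ to $\CZM$ from the outset, which technically presupposes $\CZM\sbe|\MBool|$ before you have proved it, but since you do establish this inclusion and since the formula $\GGG(\a)=X_\a$ makes sense for any z-map, no mathematical content is lost.
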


\begin{proof}
 Let us denote by $\FFF_{ed}$  the restriction of the contravariant functor $\FFF:\ZH\lra\MBool$ to $\EDT$, and by $\GGG_{ed}$  the restriction of the contravariant functor
$\GGG:\MBool\lra\ZH$ to $\CMZM$. Recall that $\FFF$ and $\GGG$ were defined in the proof of Theorem \ref{nzduality}. We are going to show that $\FFF_{ed}(|\EDT|) \sbe|\CMZM|$ and $\GGG_{ed}(|\CMZM|)\sbe|\EDT|$. Indeed, for every $X\in|\EDT|$, we have that $\FFF_{ed}(X)=i_X$, where $i_X:\co(X)\hookrightarrow\PPP(X)$ is the inclusion map. Since $\CO(X)=\RC(X)$, we obtain that  $\FFF_{ed}(X)\in |\CMZM|$. Let now $(\a:A\lra B)\in|\CMZM|$. Then  $\GGG_{ed}(\a)=X_\a$.
 We will show that $X_\a$ is a dense subspace of $\SSS(A)$. Indeed, if $a\in A^+$ then $\a(a)\neq 0$ and, hence, there exists $x\in\At(B)$ such that $x\le \a(a)$; this, however, means that $\a_x(a)=1$, i.e., $\a_x\in s_A(a)\cap X_\a$. So,  $X_\a$ is a dense subspace of $\SSS(A)$.
Thus, $\GGG_{ed}(\a)\in |\EDT|$. Now, Theorem \ref{nzduality} implies that $$\FFF_{ed}:\EDT\lra\CMZM \ \mbox{ and } \ \GGG_{ed}:\CMZM\lra\EDT$$ are dual equivalences.
Finally, we will show that the categories $\CMZM$ and $\CZM$ coincide. Indeed, let $\a:A\lra B$ be a complete z-map. Then $A$ is a complete Boolean algebra and, hence, $\SSS(A)$ is extremally disconnected. As we have already seen, $X_\a$ is a dense subspace of  $\SSS(A)$, and thus $X_\a$ is also extremally disconnected. Now, using Lemma \ref{isombool}, we obtain that $s_A^{X_\a}(A)=X_\a\cap s_A(A)=X_\a\cap\co(\SSS(A))=X_\a\cap\RC(\SSS(A))=\RC(X_\a)=\co(X_\a)$. Therefore, $\a$ is an mz-map.
This shows that $\CMZM\equiv\CZM$. Hence, the categories $\EDT$ and $\CZM$ are dually equivalent.
\end{proof}

\section{Two duality theorems for the category of zero-di\-men\-sional Hausdorff compactifications
of zero-di\-men\-sional spaces}

Recall first the following assertion from \cite{BMO}:

\begin{pro}\label{bpro}{\rm\cite{BMO}}
There is a category $\Comp$ whose objects are Hausdorff
compactifications $c:X\lra Y$ and whose morphisms between any two
$\Comp$ -objects $c:X\lra Y$ and $c\ap:X\ap\lra Y\ap$ are all
pairs $(f,g)$, where $f:X\lra X\ap$ and $g:Y\lra Y\ap$ are
continuous maps such that $g\circ c=c\ap\circ f$. The composition
of two morphisms $(f_1,g_1)$ and $(f_2,g_2)$ is defined to be
$(f_2\circ f_1, g_2\circ g_1)$. The identity map of a
$\Comp$-object  $c:X\lra Y$ is defined to be $id_c\df
(id_X,id_Y)$.
\end{pro}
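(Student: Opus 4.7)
The statement is a routine category-axiom verification, so my plan is to check the three conditions one after another: well-definedness of composition, associativity, and the identity laws. The only genuine content is in well-definedness, since everything else reduces to the corresponding property of composition of continuous maps.

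First, I would check that composition is well-defined, i.e.\ that for $(f_1,g_1)\in\Comp(c,c\ap)$ and $(f_2,g_2)\in\Comp(c\ap,c'')$ the pair $(f_2\circ f_1,g_2\circ g_1)$ really is a $\Comp$-morphism from $c$ to $c''$. The underlying maps $f_2\circ f_1:X\lra X''$ and $g_2\circ g_1:Y\lra Y''$ are continuous as compositions of continuous maps, so the only thing to verify is the commutation
\[
(g_2\circ g_1)\circ c\;=\;c''\circ(f_2\circ f_1).
\]
This follows by pasting the two given commutative squares: $(g_2\circ g_1)\circ c=g_2\circ(g_1\circ c)=g_2\circ(c\ap\circ f_1)=(g_2\circ c\ap)\circ f_1=(c''\circ f_2)\circ f_1=c''\circ(f_2\circ f_1)$.

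Next, associativity of composition in $\Comp$ is immediate: for morphisms $(f_i,g_i)$, $i=1,2,3$, both triple composites $((f_3\circ f_2)\circ f_1,(g_3\circ g_2)\circ g_1)$ and $(f_3\circ(f_2\circ f_1),g_3\circ(g_2\circ g_1))$ agree coordinatewise by associativity of composition of set functions (which also preserves continuity). The identity laws are equally trivial: for any $\Comp$-object $c:X\lra Y$, the pair $id_c=(id_X,id_Y)$ consists of continuous maps and satisfies $id_Y\circ c=c=c\circ id_X$, so it lies in $\Comp(c,c)$; and for any $(f,g)\in\Comp(c,c\ap)$ one has $(f,g)\circ(id_X,id_Y)=(f\circ id_X,g\circ id_Y)=(f,g)$ and similarly on the other side.

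There is no real obstacle here: the verification is entirely mechanical, and the key (and only non-trivial) observation is the pasting of commutative squares in the first step. Since the proof is so short, one could in fact write the whole argument out in a few lines rather than structuring it as a sketch; but the plan above makes the dependency explicit, and it is the same plan one would use to verify that, say, $\ZComp$ (the subcategory of $\Comp$ referenced above) is a category.
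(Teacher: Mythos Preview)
Your verification is correct and complete; this is exactly the standard check of the category axioms, with the only substantive point being the pasting of the two commuting squares to show composition is well-defined. The paper itself gives no proof of this proposition---it is simply recalled from \cite{BMO}---so there is nothing to compare against beyond noting that your argument is the expected routine one.
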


\begin{defi}\label{zcompd}
\rm We will denote by $\ZComp$ the full subcategory of the
category $\Comp$ whose objects are all Hausdorff compactifications
$c:X\lra Y$ for which $Y$ is a zero-dimensional space.
\end{defi}

\begin{rem}\label{zcompe}
\rm Note that Example 3.2 from \cite{BMO} shows that there exist
$\ZComp$-objects $c:X\lra Y$ and $c\ap:X\lra Y\ap$ which are
isomorphic in $\ZComp$ but not equivalent as compactifications. On
the other hand, as it is shown in \cite{BMO}, any two equivalent
compactifications of a space $X$ are isomorphic in $\Comp$.
\end{rem}


\begin{pro}\label{zcompb}
Let $c:X\lra Y$ be a $\ZComp$-object. If $c$ is isomorphic to the
Banaschewski compactification $\b_0:X\lra\b_0X$ in $\ZComp$, then
$c$ is equivalent to $\b_0$.
\end{pro}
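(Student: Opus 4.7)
The plan is to use the universal property of the Banaschewski compactification (Theorem~\ref{zdextcb}) to promote an abstract $\ZComp$-isomorphism between $c$ and $\b_0$ into an actual equivalence of compactifications. I would begin by unpacking the hypothesis: an isomorphism $c \cong \b_0$ in $\ZComp$ is a pair $(f,g)$ with $f:X\to X$ and $g:Y\to\b_0X$ both homeomorphisms, satisfying $g\circ c = \b_0\circ f$. If $f$ were $id_X$ we would already be done, so the task is to show that the ``twist'' recorded by $f$ can be absorbed.

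I would then apply Theorem~\ref{zdextcb} to $id_X:X\to X$ and to the zero-dimensional Hausdorff compactification $c:X\to Y$, producing a continuous map $g_0:\b_0X\to Y$ with $g_0\circ\b_0 = c$. It suffices to prove that $g_0$ is a homeomorphism, for then $(Y,c)$ and $(\b_0X,\b_0)$ are equivalent compactifications of $X$ in the sense of Definition~\ref{niDE}.

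To prove $g_0$ is a homeomorphism, I would study the composite $g\circ g_0:\b_0X\to\b_0X$. Combining the two relations gives $(g\circ g_0)\circ\b_0 = g\circ c = \b_0\circ f$. Applying Theorem~\ref{zdextcb} once more, this time to the homeomorphism $f^{-1}:X\to X$ and the compactification $\b_0:X\to\b_0X$, I obtain a continuous map $h:\b_0X\to\b_0X$ with $h\circ\b_0 = \b_0\circ f^{-1}$. Then $h\circ g\circ g_0$ and $id_{\b_0X}$ agree on the dense subset $\b_0(X)$ of the Hausdorff space $\b_0X$, hence they coincide; the symmetric computation yields $(g\circ g_0)\circ h = id_{\b_0X}$. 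Thus $g\circ g_0$ is a homeomorphism, and since $g$ is a homeomorphism, so is $g_0$.

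The main obstacle, signposted by Remark~\ref{zcompe}, is precisely that an isomorphism in $\ZComp$ need \emph{not} in general be an equivalence of compactifications: the accompanying map $f:X\to X$ on the base space can be nontrivial. The resolution exploits the defining maximality of $\b_0$ via Theorem~\ref{zdextcb}, which furnishes canonical continuous extensions that are pinned down uniquely by the density of $X$ together with the Hausdorffness of the target, allowing the isomorphism data to be converted into a bona fide equivalence.
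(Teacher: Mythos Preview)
Your proof is correct and follows essentially the same approach the paper indicates: the paper's proof simply refers to the analogous argument for the Stone--\v{C}ech compactification in \cite[Theorem~3.3]{BMO}, replacing the Stone--\v{C}ech universal property by the Banaschewski extension theorem (Theorem~\ref{zdextcb}), which is exactly what you do. Your writeup is thus a faithful elaboration of the proof the paper has in mind.
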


\begin{proof}
The proof is analogous to that of Theorem 3.3 from \cite{BMO}. The
only difference is that the Banaschewski Theorem \ref{zdextcb} has
to be used.
\end{proof}

\begin{theorem}\label{zdualityc}
The categories $\ZComp$ and\/ $\ZA$ are dually equivalent.
\end{theorem}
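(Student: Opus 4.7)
The plan is to define a contravariant functor $F_c:\ZComp\lra\ZA$ together with its quasi-inverse $G_c:\ZA\lra\ZComp$, using the Stone duality from \ref{nistone} as the main engine. The intuition is that a zero-dimensional Hausdorff compactification $c:X\lra Y$ is completely encoded by the pair $(\co(Y),X_c)$, where $X_c\df\{\widehat{c(x)}\st x\in X\}$ is the dense subset of $\SSS(\co(Y))$ corresponding to $c(X)\sbe Y$ under the Stone homeomorphism $t_Y$.

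On objects I set $F_c(c)\df(\co(Y),X_c)$; by Fact \ref{zalgf} this is a z-algebra, because the density of $c(X)$ in $Y$ transfers, via $t_Y$, to density of $X_c$ in $\SSS(\co(Y))$. On a $\ZComp$-morphism $(f,g):c\lra c'$ I define $F_c(f,g)\df(\co(g),\tilde{f})$, with $\tilde{f}:X_c\lra X_{c'}$ sending $\widehat{c(x)}\mapsto\widehat{c'(f(x))}$; the $\ZA$-compatibility condition $\widehat{c(x)}\circ\co(g)=\widehat{c'(f(x))}$ is verified directly from $g\circ c=c'\circ f$. For $G_c$ I put $G_c(A,X)\df(X\hookrightarrow\SSS(A))$, which is a $\ZComp$-object because, by Fact \ref{zalgf}, $X$ is dense in the Stone space $\SSS(A)$. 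On a $\ZA$-morphism $(\p,f):(A,X)\lra(A',X')$ I define $G_c(\p,f)\df(f,\SSS(\p))$; both the continuity of $f$ and the commutativity of the compactification square follow from the $\ZA$-condition $x'\circ\p=f(x')$, since then $\SSS(\p)(x')=x'\circ\p=f(x')$, so $f$ is just the restriction of the continuous map $\SSS(\p)$ to $X'$.

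For the natural isomorphisms, given $c:X\lra Y$ the pair $(h_c,t_Y):c\lra G_c(F_c(c))$, with $h_c(x)\df\widehat{c(x)}$, is a $\ZComp$-isomorphism, since $h_c=t_Y\circ c$ is a homeomorphism through the embedding $c$. Given $(A,X)$, the pair $(s_A,k):(A,X)\lra F_c(G_c(A,X))=(\co(\SSS(A)),\hat{X})$, where $s_A$ is the Stone isomorphism from \ref{stonemap} and $k:\hat{X}\lra X$ sends $\hat{x}\mapsto x$, is a $\ZA$-isomorphism: the $\ZA$-compatibility reduces to $\hat{x}(s_A(a))=1\Leftrightarrow x(a)=1$, which is tautological. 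Naturality of both families follows from the naturality of Stone duality. The main technical obstacle is bookkeeping the morphism directions in the contravariant setting and verifying the square diagrams; the subtlety highlighted by Remark \ref{zcompe} (that $\ZComp$-isomorphism is strictly weaker than equivalence of compactifications) actually works in our favour, since Stone duality produces precisely $\ZComp$-isomorphisms and no improvement to equivalences is needed.
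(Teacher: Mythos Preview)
Your argument is correct and follows essentially the same route as the paper. The only cosmetic difference is that the paper assigns to $c:X\lra Y$ the z-algebra $(c\inv(\co(Y)),\hat{X}_{A_c})$, pulling the clopen algebra back to $X$, whereas you keep $\co(Y)$ on the compact side and push the points forward via $c$ to $X_c=t_Y(c(X))$; the two choices differ by the obvious Boolean isomorphism $\rho_c:c\inv(\co(Y))\lra\co(Y)$, and your packaging spares you that extra identification in the natural isomorphism (the paper has to compose with $\SSS(\rho_c)$).
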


\begin{proof}
We start by defining a contravariant functor $$\Phi:\ZComp\lra\ZA.$$

For every $(c:X\lra Y)\in|\ZComp|$, set $A_c\df c\inv(\co(Y))$,
$\hat{X}_c\df \hat{X}_{A_c}$
(see  \ref{hats} for the notation),
and $$\Phi(c)\df (A_c,\hat{X}_c).$$ Then, by Example \ref{dzalgez},
$\Phi(c)\in|\ZA|$.

Let now $c:X\lra Y$ and $c\ap:X\ap\lra Y\ap$ be $\ZComp$-objects
and $(f,g)$ be a $\ZComp$-morphism between $c$ and $c\ap$. Set
 $$\Phi(f,g)\df (\pi_f,\hat{f}_{cc\ap}),$$ where
$\pi_f:A_{c\ap}\lra A_c$ is defined by $\pi_f(U)\df f\inv(U)$ for
every $U\in A_{c\ap}$, and $$\hat{f}_{cc\ap}:\hat{X}_c\lra \widehat{X\ap}_{c\ap}$$ is
defined by $\hat{f}_{cc\ap}(\hat{x})\df \widehat{f(x)}$ for every $x\in X$.
Arguing as in the
proof of Theorem \ref{zduality}, we obtain that
$\Phi(f,g)\in\ZA(\Phi(c\ap),\Phi(c))$. Now it is easy to see that
$\Phi$ is a contravariant functor.

We define $\Psi:\ZA\lra\ZComp$ as follows: for every $(A,X)\in|\ZA|$, set
$$\Psi(A,X)\df c_{(A,X)},$$
where, regarding $X$ as a subspace of $\SSS(A)$, $c_{(A,X)}:X\hookrightarrow\SSS(A)$ is the embedding of $X$ in $\SSS(A)$;
for every $(\p,f)\in\ZA((A,X),(A\ap,X\ap))$, we put
$$\Psi(\p,f)\df (f,\SSS(\p)).$$
By Fact \ref{zalgf}, $c_{(A,X)}$ is a dense embedding and thus $\Psi(A,X)$ is a $\ZComp$-object. Since for every $x\ap\in X\ap$, $\SSS(\p)(x\ap)=x\ap\circ\p=f(x\ap)$, we obtain that $\Psi(\p,f)$ is a $\ZComp$-morphism. Hence, $\Psi$ is well-defined. Obviously, it is a contravariant functor.

Let $(A,X)\in|\ZA|$. Then $\Phi(\Psi(A,X))=(A_{c_{(A,X)}}, \hat{X}_{c_{(A,X)}})$, $A_{c_{(A,X)}}=X\cap s_A(A)=s_A^X(A)$ and $\hat{X}_{c_{(A,X)}}=\{\hat{x}:s_A^X(A)\lra\2\st x\in X\}$. Working like in the proof of Theorem \ref{zduality},
we define a map ${\breve{\imath}}_X^{\, c_{(A,X)}}:\hat{X}_{c_{(A,X)}}\lra X$ by ${\breve{\imath}}_X^{\, c_{(A,X)}}(\hat{x})\df x$, for every $x\in X$, and set  $s''_{(A,X)}\df (\bar{s}_A^X,{\breve{\imath}}_X^{\, c_{(A,X)}})$. Then, like in Theorem \ref{zduality}, we show that $s''_{(A,X)}:(A,X)\lra (\Phi\circ \Psi)(A,X)$ is a $\ZA$-isomorphism and, moreover, $$s'':\Id_{\ZA}\lra \Phi\circ \Psi, \ \ (A,X)\mapsto s''_{(A,X)},$$ is a natural isomorphism.

Let now $(c:X\lra Y)\in|\ZComp|$. Then $(\Psi\circ \Phi)(c)=c_{(A_c,\hat{X}_c)}$ and $c_{(A_c,\hat{X}_c)}:\hat{X}_c\hookrightarrow\SSS(A_c)$.
Obviously, the map $\rho_c: A_c\lra \co(Y),\ \ c\inv(U)\mapsto U,$ is a Boolean isomorphism. Hence, the map $\SSS(\rho_c): \SSS(\TTT(Y))\lra\SSS(A_c)$ is a homeomorphism. By Example \ref{dzalgez}, the map $\hat{h}_{X,A_c}:X\lra \hat{X}_c$ is a homeomorphism.
Now it is easy to show that the map $\vk_c\df (\hat{h}_{X,A_c},\SSS(\rho_c)\circ t_Y):c\lra c_{(A_c,\hat{X}_c)}$ is a $\ZComp$-isomorphism (see \ref{nistone} for the notation $t_Y$). Finally, it is not difficult to prove that $$\vk:\Id_{\ZComp}\lra \Psi\circ \Phi, \ \ c \mapsto \vk_c,$$ is a natural isomorphism.
Therefore, the categories $\ZComp$ and\/ $\ZA$ are dually equivalent.
\end{proof}

We will denote by $\EDComp$ the full subcategory of the category $\ZComp$ having as objects all compactifications $c:X\lra Y$, for which $Y\in|\EDT|$.

\begin{cor}\label{coredtc}
The categories $\EDComp$ and $\ZCB$ are dually equivalent.
\end{cor}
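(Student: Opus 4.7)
The plan is to restrict the dual equivalence given by the contravariant functors $\Phi:\ZComp\lra\ZA$ and $\Psi:\ZA\lra\ZComp$ from Theorem \ref{zdualityc} to the subcategories $\EDComp\sbe\ZComp$ and $\ZCB\sbe\ZA$. Once the two closure conditions $\Phi(|\EDComp|)\sbe|\ZCB|$ and $\Psi(|\ZCB|)\sbe|\EDComp|$ have been verified, the natural isomorphisms $s''$ and $\vk$ constructed in the proof of Theorem \ref{zdualityc} restrict without change and furnish the asserted dual equivalence between $\EDComp$ and $\ZCB$.

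For the first inclusion, I would take an arbitrary $(c:X\lra Y)\in|\EDComp|$. Since $Y$ is a compact Hausdorff zero-dimensional extremally disconnected space, $\CO(Y)=\RC(Y)$ is a complete Boolean algebra. From the proof of Theorem \ref{zdualityc}, the map $\rho_c:A_c\lra\CO(Y)$ sending $c\inv(U)$ to $U$ is a Boolean isomorphism, so $A_c$ inherits completeness, and therefore $\Phi(c)=(A_c,\hat{X}_c)\in|\ZCB|$.

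For the second inclusion, I would take $(A,X)\in|\ZCB|$ and invoke the classical characterization that a Boolean algebra is complete if and only if its Stone space is extremally disconnected. This makes $\SSS(A)$ a compact Hausdorff zero-dimensional extremally disconnected space, so $\Psi(A,X)=c_{(A,X)}:X\hookrightarrow\SSS(A)$ is a dense embedding of a zero-dimensional Hausdorff space into an object of $\EDT$, i.e. an $\EDComp$-object.

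I do not expect a serious obstacle: the corollary is essentially a cosmetic restriction of Theorem \ref{zdualityc}, relying only on the Boolean isomorphism $\rho_c$ already produced there and on the standard equivalence between completeness of a Boolean algebra and extremal disconnectedness of its Stone space. The same restriction argument also exhibits, via the composition with the dual equivalence of Theorem \ref{zdualityed}, a concrete equivalence between $\EDComp$ and $\EDT$, which is consistent with the fact that a dense subspace of an extremally disconnected space is itself extremally disconnected.
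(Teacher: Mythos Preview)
Your proposal is correct and follows essentially the same approach as the paper: restrict the dual equivalence of Theorem \ref{zdualityc} by checking $\Phi(|\EDComp|)\sbe|\ZCB|$ and $\Psi(|\ZCB|)\sbe|\EDComp|$. The only cosmetic difference is that the paper shows $A_c$ is complete by identifying it with $\RC(X)$ via Lemma \ref{isombool}, whereas you use the isomorphism $\rho_c:A_c\lra\co(Y)$ together with $\CO(Y)=\RC(Y)$; both arguments are valid and equally short.
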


\begin{proof}
Having in mind Theorem \ref{zdualityc},   it is enough to show that $\Phi(|\EDComp|)\sbe |\ZCB|$ and   $\Psi(|\ZCB|)\sbe |\EDComp|$.
Let  $(c:X\lra Y)\in|\EDComp|$. Then, using Lemma \ref{isombool}, we obtain (in the notation from the proof of Theorem \ref{zdualityc}) that $A_c=c\inv(\co(Y))=c\inv(\RC(Y))=\RC(X)$. Thus, $\Phi(c)\in|\ZCB|$. Let now $(A,X)\in|\ZCB|$. Then $A$ is a complete Boolean algebra and, hence, $\SSS(A)\in|\EDT|$. This shows that $\Psi(A,X)\in|\EDComp|$. So, the proof is completed.
\end{proof}

\begin{cor}\label{coredtcc}
The categories $\EDComp$ and $\EDT$ are equivalent.
\end{cor}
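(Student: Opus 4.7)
The plan is to obtain the equivalence by composing the two dual equivalences already available. By Theorem \ref{zdualityed}, the contravariant functors $F_{ed}:\EDT\lra\ZCB$ and $G_{ed}:\ZCB\lra\EDT$ form a dual equivalence; by Corollary \ref{coredtc}, the restrictions $\Phi_e:\EDComp\lra\ZCB$ and $\Psi_e:\ZCB\lra\EDComp$ of $\Phi$ and $\Psi$ form a dual equivalence. Since the composite of two contravariant equivalences is a (covariant) equivalence, setting
$$\Omega \df G_{ed}\circ\Phi_e:\EDComp\lra\EDT \quad\mbox{and}\quad \Theta \df \Psi_e\circ F_{ed}:\EDT\lra\EDComp$$
will yield the desired equivalence of the categories $\EDComp$ and $\EDT$.

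Next I would unpack what these functors do on objects in order to keep the picture concrete. For $(c:X\lra Y)\in|\EDComp|$, since $Y\in|\EDT|$ we have $\co(Y)=\RC(Y)$, and Lemma \ref{isombool} applied to the dense embedding $c$ yields $A_c=c\inv(\co(Y))=c\inv(\RC(Y))=\RC(X)=\co(X)$; hence $\Phi_e(c)=(\co(X),\hat{X})$ and therefore $\Omega(c)$ is the subspace $\hat{X}$ of $\SSS(\co(X))$, which is canonically homeomorphic to $X$ via the map $\hat{h}_X$ of Example \ref{dzalgez}. In the opposite direction, for $X\in|\EDT|$ one obtains $\Theta(X)=\Psi_e(\co(X),\hat{X})=c_{(\co(X),\hat{X})}$, the embedding $\hat{X}\hookrightarrow\SSS(\co(X))=\b_0X$; so, up to the identification $\hat{h}_X$, $\Theta$ assigns to $X$ its Banaschewski compactification $\b_0:X\lra\b_0X$. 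Conceptually, the equivalence says that, up to equivalence of compactifications, the Banaschewski compactification of an $\EDT$-space is the only $\EDComp$-compactification it admits.

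The natural isomorphisms $\Theta\circ\Omega\cong\Id_{\EDComp}$ and $\Omega\circ\Theta\cong\Id_{\EDT}$ would be produced by horizontally composing the natural isomorphisms provided by the relevant restrictions of Theorems \ref{zduality} and \ref{zdualityc}, exactly as in the composition-of-adjoint-equivalences argument already used in the proof of Theorem \ref{nzduality} (cf.\ \cite[Exercise 6A]{AHS} and \cite[Theorem IV.8.1]{MacLane}). The main obstacle is essentially bookkeeping rather than content: all substantive work already sits in Theorem \ref{zdualityed} and Corollary \ref{coredtc}, and it only remains to confirm that the restrictions $\Phi_e$, $\Psi_e$, $F_{ed}$, $G_{ed}$ indeed land in the subcategories claimed, which was verified in the proofs of those results.
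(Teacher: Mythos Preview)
Your proposal is correct and follows exactly the paper's approach: the paper's proof is the one-line observation that the result follows immediately from Theorem \ref{zdualityed} and Corollary \ref{coredtc}, i.e., by composing the two dual equivalences through $\ZCB$, precisely as you do with $\Omega=G_{ed}\circ\Phi_e$ and $\Theta=\Psi_e\circ F_{ed}$. Your additional unpacking of the functors on objects is accurate and in fact anticipates the paper's remark (immediately following the proof) that the result can alternatively be obtained from the classical fact that every $\EDComp$-object $c:X\lra Y$ is equivalent, as a compactification, to $\b:X\lra\b X$.
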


\begin{proof}
This follows immediately from Theorem \ref{zdualityed} and Corollary \ref{coredtc}.
\end{proof}

Note that  Corollary \ref{coredtcc} can be also proved  with the help of the fact that if $(c:X\lra Y)\in|\EDComp|$ then $X$ is extremally disconnected and $c$ is equivalent (as a compactification of $X$) to the Stone-\v{C}ech compactification $\b:X\lra\b X$ of $X$ (see \cite{GJ} or \cite{E}).

Now we will  show, using the
Tarski duality,  that the category $\ZBool$ is dually
equivalent to the category $\ZComp$. The category $\ZBool$ is
similar to the category $\DeVe$, constructed in \cite{BMO} as a
 category dually equivalent to the category $\Comp$ of Hausdorff
compactifications of Tychonoff spaces.

\begin{theorem}\label{nzdualityc}
The categories $\ZComp$ and\/ $\ZBool$ are dually equivalent.
\end{theorem}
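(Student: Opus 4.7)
The plan is to parallel the two-step structure used in the proof of Theorem~\ref{nzduality}: first establish an equivalence of categories $\ZA \simeq \ZBool$ in the spirit of Theorem~\ref{zeq}, and then compose with the dual equivalence $\Phi : \ZComp \lra \ZA$, $\Psi : \ZA \lra \ZComp$ of Theorem~\ref{zdualityc} to obtain a dual equivalence $\ZComp \simeq^{\mathrm{op}} \ZBool$.

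To construct $\ZA \simeq \ZBool$, I define covariant functors $F'' : \ZA \lra \ZBool$ and $G'' : \ZBool \lra \ZA$ mirroring $F'$ and $G'$ from Theorem~\ref{zeq}. On objects, set $F''(A,X) \df s_A^X : A \lra \PPP(X)$; this is a Boolean monomorphism by Fact~\ref{zalgff}, and since $X$, viewed as a subspace of $\SSS(A)$, is Hausdorff, for each $x \in X$ one has $\{x\} = \bigcap\{s_A^X(a) : x(a)=1\}$, so that $s_A^X$ is a z-map. On morphisms, $F''(\p,f) \df (\p, \PPP(f))$, and the commutation $\PPP(f) \circ s_A^X = s_{A'}^{X'} \circ \p$ is verified exactly as in the proof of Theorem~\ref{zeq}. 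For $G''$, set $G''(\a : A \lra B) \df (A, X_\a)$, with $X_\a$ regarded as a subspace of $\SSS(A)$; this is a z-algebra because for each $a \in A^+$ we have $\a(a) \neq 0$, so some atom $x \in \At(B)$ satisfies $x \le \a(a)$, whence $\a_x \in s_A(a) \cap X_\a$, giving density of $X_\a$ in $\SSS(A)$. On morphisms, $G''(\p,\s) \df (\p, f_\s)$ with $f_\s$ defined by $\a'_{x'} \mapsto \a_{\Att(\s)(x')}$; the continuity of $f_\s$ was established in Theorem~\ref{zeq} and does not depend on the mz-map condition.

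The natural isomorphism $G'' \circ F'' \cong \Id_{\ZA}$ is essentially the identity, since for $\a \df s_A^X$ the direct computation $\a_x(a) = 1 \Leftrightarrow x \in s_A(a) \Leftrightarrow x(a)=1$ gives $\a_x = x$, so $X_{s_A^X}$ coincides with $X$ in $\Bool(A,\2)$. For $F'' \circ G'' \cong \Id_{\ZBool}$, given $\a : A \lra B$ in $\ZBool$ I exhibit the isomorphism $\a \lra s_A^{X_\a}$ as $(id_A, \s_B)$, where $\s_B : B \lra \PPP(X_\a)$ is defined by $b \mapsto \{\a_y : y \in \At(B),\, y \le b\}$. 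This map is the composition of the Tarski isomorphism $\ep_B : B \lra \PPP(\At(B))$ with the $\Caba$-isomorphism $\PPP(\At(B)) \lra \PPP(X_\a)$ induced by the bijection $h_\a : \At(B) \lra X_\a$ from~\ref{hats}, hence is a $\Caba$-isomorphism, and the commutation $\s_B \circ \a = s_A^{X_\a}$ is immediate from the definition of $s_A^X$. Naturality of both isomorphisms reduces to diagram chases identical to those carried out in the proof of Theorem~\ref{zeq}.

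Finally, composing with Theorem~\ref{zdualityc} yields the desired dual equivalences $F'' \circ \Phi : \ZComp \lra \ZBool$ and $\Psi \circ G'' : \ZBool \lra \ZComp$. The main point to verify---and the only place where the argument genuinely diverges from that of Theorem~\ref{zeq}---is the construction of $F'' \circ G'' \cong \Id_{\ZBool}$ without the mz-map hypothesis; that hypothesis was used in Theorem~\ref{zeq} only to make $\bar{s}_A^{X_\a} : A \lra \co(X_\a)$ an isomorphism, but here it is bypassed by working with $\PPP(X_\a)$ in place of $\co(X_\a)$ and taking the first component of the natural isomorphism to be $id_A$ rather than $\bar{s}_A^{X_\a}$. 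No further substantive obstacle is anticipated.
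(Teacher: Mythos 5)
Your proposal is correct, and it takes a genuinely different route from the paper. The paper proves Theorem~\ref{nzdualityc} directly: it defines $\Phi':\ZComp\lra\ZBool$, $c\mapsto s_{A_c}^{\hat{X}_c}$, and $\Psi':\ZBool\lra\ZComp$, $\a\mapsto c_\a$, and then verifies the two natural isomorphisms $\upsilon:\Id_{\ZBool}\lra\Phi'\circ\Psi'$ and $\xi:\Id_{\ZComp}\lra\Psi'\circ\Phi'$ from scratch (reusing the notation, but not the statement, of Theorem~\ref{zdualityc}). You instead isolate an equivalence $\ZA\simeq\ZBool$ -- a statement the paper never makes explicit -- and compose it with Theorem~\ref{zdualityc} as a black box, exactly mirroring the paper's own strategy for deducing Theorem~\ref{nzduality} from Theorems~\ref{zduality} and~\ref{zeq}. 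The resulting composite functors agree with the paper's: $F''\circ\Phi=\Phi'$ and $\Psi\circ G''=\Psi'$ on the nose (e.g., $\Psi(G''(\a))=c_{(A,X_\a)}=c_\a$, and $f_\s=S_\p$ since $\a'_{x'}\circ\p=\a_{\Att(\s)(x')}$ by Lemma~\ref{taroblm}), so nothing is lost. What your decomposition buys is modularity: the natural isomorphism $F''\circ G''\cong\Id_{\ZBool}$ with unit $(id_A,\s_B)$ is simpler than the paper's $\upsilon_\a=(s_A^{X_\a},\ep_B^{c_\a})$, because keeping $A$ itself in $G''(\a)=(A,X_\a)$ (rather than passing to $\CO(X_\a)$ or to $A_{c_\a}$) lets the first component be the identity; and you correctly identify that the mz-hypothesis of Theorem~\ref{zeq} was needed there only to make $\bar{s}_A^{X_\a}:A\lra\co(X_\a)$ surjective, which is irrelevant once the codomain is $\PPP(X_\a)$. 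The remaining verifications (that $s_A^X$ is a z-map via Hausdorffness of $X\sbe\SSS(A)$, density of $X_\a$ in $\SSS(A)$, the compatibility $\s_B\circ\a=s_A^{X_\a}$, and the naturality squares via Lemma~\ref{taroblm}) are all present in your sketch and check out. The price of your route is that the reader must accept the routine fact that a dual equivalence composed with an equivalence is a dual equivalence, which the paper itself already uses for Theorem~\ref{nzduality}.
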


\begin{proof}
We will utilize the notation introduced in the proof of Theorem \ref{zdualityc}.

We start by defining a contravariant functor $$\Phi\ap:\ZComp\lra\ZBool.$$

For every $(c:X\lra Y)\in|\ZComp|$, we set $$\Phi\ap(c)\df s_{A_c}^{\hat{X}_c}.$$  Then it is easy to see that
$\Phi\ap(c)\in|\ZBool|$.

For every $(f,g)\in\ZComp(c,c\ap)$, we set $$\Phi\ap(f,g)\df
(\pi_f,\PPP(\hat{f}_{cc\ap})).$$
It is not difficult to  obtain that
$\Phi\ap(f,g)\in\ZBool(\Phi\ap(A\ap,X\ap),\Phi\ap(A,X)).$ Now it is easy
to see that $\Phi\ap$ is a contravariant functor.

Our next aim is to define a contravariant functor
$$\Psi\ap:\ZBool\lra\ZComp.$$
Let $(\a:A\lra B)\in|\ZBool|$. We put
$$\Psi\ap(\a)\df c_\a,\ \mbox{ where }\ c_\a:X_\a\hookrightarrow\SSS(A)$$
(see \ref{hats} for the notation $X_\a$). Obviously, $\Psi\ap(c)\in|\ZComp|$.

Let now $(\p,\s)\in\ZBool(\a,\a\ap)$. Then it is easy to show that $\SSS(\p)(X_{\a\ap})\sbe X_\a$.
Let $S_\p:X_{\a\ap}\lra X_\a$ be the restriction of $\SSS(\p)$. We put
$$\Psi\ap(\p,\s)\df (S_\p,\SSS(\p)).$$
Then it is not difficult to prove that $\Psi\ap(\p,\s)\in\ZComp(\Psi\ap(\a\ap),\Psi\ap(\a))$ and that
$\Psi\ap$ is a contravariant functor.

Let $(\a:A\lra B)\in|\ZBool|$. Then $\Phi\ap(\Psi\ap(\a))=s_{A_{c_\a}}^{\hat{X}_{c_\a}}$ and $s_{A_{c_\a}}^{\hat{X}_{c_\a}}:A_{c_\a}\lra\PPP(\hat{X}_{c_\a})$.
We have that $A_{c_\a}=c_\a\inv(\co(\SSS(A)))=X_\a\cap\TTT(\SSS(A))=s_A^{X_\a}(A)$. Thus $\bar{s}_A^{X_\a}:A\lra A_{c_\a}$ is a Boolean isomorphism.
Since $\a$ is a z-map,  the map $h_\a:\At(B)\lra X_\a$, $x\mapsto \a_x$, is a bijection (see \ref{hats}).
Also, the map $\hat{h}_{X_\a,A_{c_\a}}: X_\a\lra\hat{X}_{c_\a}$, $\a_x\mapsto\widehat{\a_x}$, for all $x\in\At(B)$, where $\widehat{\a_x}:A_{c_\a}\lra\2$, is a bijection (see \ref{hats}).
 Setting $k_\a\df \hat{h}_{X_\a,A_{c_\a}}\circ h_\a$ and $k_\a^P:\PPP(\At(B))\lra\PPP(\hat{X}_{c_\a})$, $M\mapsto\{k_\a(m)\st m\in M\}$, we obtain that $k_\a^P$ is a bijection.
 Then the map $\ep_B^{c_\a}\df k_\a^P\circ\ep_B$ is a bijection (see \ref{tar} for the notation $\ep_B$) and
$\ep_B^{c_\a}:B\lra\PPP(\hat{X}_{c_\a})$,  $b\mapsto \{\widehat{\a_x}\st x\in\At(B), x\le b\}.$ Now we put $\upsilon_\a\df (s_A^{X_\a},\ep_B^{c_\a})$.  It is easy to see that $\upsilon_\a:\a\lra \Phi\ap(\Psi\ap(\a))$ is a $\ZBool$-isomorphism. One routinely verifies that
$$\upsilon:\Id_{\ZBool}\lra \Phi\ap\circ\Psi\ap,\ \ \  \a\mapsto\upsilon_\a,$$
is a natural isomorphism.

Let $(c:X\lra Y)\in|\ZComp|$. Then $\Psi\ap(\Phi\ap(c))=c_\a$, where $\a\df s_{A_c}^{\hat{X}_c}$. Thus $c_\a: X_\a\hookrightarrow \SSS(A_c)$, where $A_c=c\inv(\co(Y))$ and $X_\a=\{\a_{\hat{x}}\st \hat{x}\in \hat{X}_c\}$. We have that for every $U\in A_c$, $\a_{\hat{x}}(U)=1\Leftrightarrow \hat{x}\le \a(U)\Leftrightarrow \hat{x}\in s_{A_c}^{\hat{X}_c}(U)\Leftrightarrow \hat{x}(U)=1$. Thus, $\a_{\hat{x}}\equiv \hat{x}$ for every $x\in X$. Hence, $X_\a=\hat{X}_c$, i.e., $c_\a: \hat{X}_c\hookrightarrow \SSS(A_c)$. As we noted in the proof of Theorem \ref{zdualityc},
 the maps $\SSS(\rho_c): \SSS(\TTT(Y))\lra\SSS(A_c)$ (where $\rho_c: A_c\lra \co(Y),\ \ c\inv(U)\mapsto U$)
 and  $\hat{h}_{X,A_c}:X\lra \hat{X}_c, \ \ x\mapsto \hat{x},$ are homeomorphisms. Now it is easy to see that the map $\xi_c\df (\hat{h}_{X,A_c},\SSS(\rho_c)\circ t_Y):c\lra \Psi\ap(\Phi\ap(c))$ is a $\ZComp$-isomorphism (see \ref{nistone} for the notation $t_Y$). Finally, a routine verification shows that
$$\xi:\Id_{\ZComp}\lra \Psi\ap\circ\Phi\ap,\ \ \  c\mapsto\xi_c,$$
is a natural isomorphism.

All this proves that the categories $\ZComp$ and\/ $\ZBool$ are dually equivalent.
\end{proof}

\begin{nist}\label{dwthcor}
\rm
We are now going to derive the Dwinger Theorem \ref{dwingerth} from our Theorems \ref{zdualityc} and \ref{nzdualityc}.
In what follows, we will use the notation from their proofs.

Let us fix a space $X\in|\ZH|$. Then, obviously, the map $\l:\BB\AA(X)\lra |\ZA|, \ \  A\mapsto (A,\hat{X}_A),$ is an injection. (Note that, by Example \ref{dzalgez}, $\l$ is a well-defined function.) Thus, the map $\l_0\df\l\upharpoonright\BB\AA(X):\BB\AA(X)\lra \l(\BB\AA(X))$ is a bijection. We have that $\Psi(\l(A))=c_{(A,\hat{X}_A)}$, where
$c_{(A,\hat{X}_A)}:\hat{X}_A\hookrightarrow\SSS(A)$ is the embedding of $\hat{X}_A$ in $\SSS(A)$. We set $c_A\df c_{(A,\hat{X}_A)}\circ\hat{h}_{X,A}$ and $\DE(A)\df [c_A]$. Then $$c_A:X\lra\SSS(A),\ \ x\mapsto\hat{x},\ \ \mbox{ and }\ \ \DE:\BB\AA(X)\lra\KK_0(X).$$

For every $(c:X\lra Y)\in\KK_0(X)$, we set $\DE\ap([c])\df \l_0\inv(\Phi(c))$. Thus $$\DE\ap([c])=A_c=c\inv(\co(Y))\in\BB\AA(X)\ \ \mbox{ and }\ \ \DE\ap:\KK_0(X)\lra\BB\AA(X).$$ Note that the map $\DE\ap$ is well-defined. Indeed, if $c_1\in[c]$, where $c_1:X\lra Y_1$, then there exists a homeomorphism $f:Y\lra Y_1$ such that $c_1=f\circ c$. Hence $c_1\inv(\co(Y_1))=c\inv(f\inv(\co(Y_1)))=c\inv(\co(Y))$.

Now, for every $A\in\BB\AA(X)$, $$\DE\ap(\DE(A))=A.$$ Indeed, we have that $\DE\ap(\DE(A))=A_{c_A}=c_A\inv(\TTT(\SSS(A)))=\hs\inv_{X,A}(\bar{s}_A^{\hat{X}_A}(A))$ and, for every $U\in A$, $\hs\inv_{X,A}(\bar{s}_A^{\hat{X}_A}(U))
=U$ (see the proof of Example \ref{dzalgez}).

Further, for every $(c:X\lra Y)\in\KK_0(X)$, $\DE(\DE\ap([c]))=\DE(A_c)=[c_{A_c}]$, where $c_{A_c}:X\lra\SSS(A)$.
At the end of the proof of Theorem \ref{nzdualityc} we have shown that the map $(\hat{h}_{X,A_c},\SSS(\rho_c)\circ t_Y):c\lra c_{(A_c,\hat{X}_c)}$ is a $\ZComp$-isomorphism. Using the definition of the map $c_{A_c}$, we obtain that the map $(\hat{h}_{X,A_c}\inv,id_{\SSS(A_c)}): c_{(A_c,\hat{X}_c)}\lra c_{A_c}$ is also a $\ZComp$-isomorphism. Thus the  diagram

\begin{center}
$\xymatrix{X\ar[rr]_{\hat{h}_{X,A_c}}\ar@/^1.0pc/[rrrr]^{\rm id_{X}}\ar[dd]^{c} && \hat{X}_c\ar[rr]_{\hat{h}_{X,A_c}\inv}\ar[dd]_{c_{(A_c,\hat{X}_c)}} && X\ar[dd]_{c_{A_c}}\\
&&&&\\
            Y\ar[rr]_{\SSS(\rho_c)\circ t_Y} && \SSS(A_c)\ar[rr]_{id_{\SSS(A_c)}} && \SSS(A_c)}$
            \end{center}
is commutative. It shows that the compactifications $c$ and $c_{A_c}$ of $X$ are equivalent (since $c_{A_c}=(S(\rho_c)\circ t_Y)\circ c$). Thus, $$\DE(\DE\ap([c]))=[c].$$ Therefore, $\DE$ and $\DE\ap$ are bijections.

Let now $c_1:X\lra Y_1$ and $c_2:X\lra Y_2$ be compactifications of $X$, and $c_1\le c_2$. Then there exists a continuous map $g:Y_2\lra Y_1$ such that $c_1=g\circ c_2$. Thus, $(id_X,g)\in\ZComp(c_2,c_1)$. Then
 $A_{c_1}=c_1\inv(\co(Y_1))=c_2\inv(g\inv(\co(Y_1)))\sbe c_2\inv(\co(Y_2))=A_{c_2}$. Therefore, $$\DE\ap([c_1])\le\DE\ap([c_2]).$$

Let now $A,A\ap\in\BB\AA(X)$ and $A$ be a subalgebra of $A\ap$; denote by $i:A\lra A\ap$ the inclusion monomorphism. For every $x\in X$,  set $f(\hat{x}_{A\ap})=\hat{x}_{A}$ (see \ref{hats} for the notation). Then $f:\hat{X}_{A\ap}\lra \hat{X}_{A}$, $f(\hat{x}_{A\ap})=\hat{x}_{A\ap}\circ i$ and thus $(i,f)\in\ZA(\l(A),\l(A\ap))$. Therefore $\Psi(i,f):\Psi(\l(A\ap))\lra\Psi(\l(A))$ is a $\ZComp$-morphism. We have that $\Psi(i,f)=(f,\SSS(i))$. Hence, the diagram

\begin{center}
$\xymatrix{X\ar[rr]_{\hat{h}_{X,A\ap}}\ar@/^1.0pc/[rrrr]^{\rm c_{A\ap}}\ar[dd]^{id_X} && \hat{X}_{A\ap}\ar@{^(->}[rr]\ar[dd]_{f} && \SSS(A\ap)\ar[dd]_{\SSS(i)}\\
&&&&\\
            X\ar[rr]^{\hat{h}_{X,A}}\ar@/^-1.0pc/[rrrr]_{\rm c_A} && \hat{X}_A\ar@{^(->}[rr] && \SSS(A)}$
            \end{center}
is commutative. It shows that $$\DE(A)\le\DE(A\ap).$$ Therefore, $\DE$ and $\DE\ap$ are isomorphisms between the ordered sets $(\BB\AA(X),\sbe)$ and $(\KK_0(X),\le)$. Thus, the Dwinger Theorem is proved.

For deriving the Dwinger Theorem  from Theorem \ref{nzdualityc}, we use the same maps $\DE$ and $\DE\ap$ but find another expressions for them.  For every $(c:X\lra Y)\in\KK_0(X)$, we have that $\Phi\ap(c)=s_{A_c}^{\hat{X}_c}$ and thus $\DE\ap([c])=\dom(\Phi\ap(c))$. Also, for every $A\in\BB\AA(X)$, we have, by Example \ref{zmapt}, that the map $i_A:A\hookrightarrow\PPP(X)$ is a z-map. Set $\a\df i_A$. Then $\Psi\ap(\a)=c_{\a}$, where $c_\a:X_\a\hookrightarrow\SSS(A)$, and  $X_{\a}\equiv \hat{X}_A$. Thus $c_{\a}\equiv c_{(A,\hat{X}_A)}$ and  $\DE(A)=c_A=\Psi\ap(\a)\circ\hat{h}_{X,A}$. Then we prove exactly as above that  $\DE$ and $\DE\ap$ are bijections, and that $\DE\ap$ is monotone. Finally, let $A,A\ap\in\BB\AA(X)$ and $A\sbe A\ap$. Denote by $i:A\hookrightarrow A\ap$ the inclusion monomorphism and set $\a\df i_A$, $\a\ap\df i_{A\ap}$. Then $(i,id_{\,\PPP(X)})\in\ZBool(\a,\a\ap)$ and thus $\Psi\ap(i,id_{\,\PPP(X)})\in\ZComp(c_{\a\ap},c_\a)$. We have that $\Psi\ap(i,id_{\,\PPP(X)})=(S_i,\SSS(i))$, where $S_i:\hat{X}_{A\ap}\lra \hat{X}_A$ is the restriction of $\SSS(i):\SSS(A\ap)\lra\SSS(A)$. Writing in the last diagram $S_i$ instead of $f$, we obtain a new commutative diagram which shows again that $\DE(A)\le\DE(A\ap).$ Thus, the second proof of the Dwinger Theorem is completed.
\end{nist}


\end{document}